\documentclass[11pt]{amsart}
\usepackage{graphicx} 
\usepackage[utf8]{inputenc}
\usepackage{comment}
\usepackage[english]{babel}
\usepackage{enumitem}
\usepackage{amsmath,amsfonts,amssymb,amsthm}
\usepackage{amscd}
\usepackage{tikz}

\usepackage[a4paper,nomarginpar]{geometry}

\usepackage{epsfig,graphicx,color}
\usepackage[colorlinks=true, linkcolor=blue, urlcolor=red, citecolor=blue, hyperindex]{hyperref}
\usepackage[all]{xy}
\newtheorem{theorem}{Theorem}[section]
\newtheorem{lemma}[theorem]{Lemma}
\newtheorem{proposition}[theorem]{Proposition}
\newtheorem{corollary}[theorem]{Corollary}

\newtheorem{definition}[theorem]{Definition}
\newtheorem{example}[theorem]{Example}

\newtheorem*{observation*}{Observation}

\newtheorem{remark}[theorem]{Remark}

\newcommand{\N}{\mathbb{N}}  
\newcommand{\Z}{\mathbb{Z}}  
\newcommand{\R}{\mathbb{R}}  

\newcommand{\eps}{\varepsilon} 

\newcommand{\oa}{\overrightarrow}

\newcommand{\ent}{\mathrm{ent}}
\newcommand{\cantor}{\mathbf C}

\newcommand{\orb}{{\mathrm{Orb}}}
\newcommand{\fence}{{\bf F}_{\Phi}{}}
\newcommand{\nfence}{{\bf F}_{\Phi_n}{}}

\newcommand{\homeocantor}{{\mathbf H}_{X}}

\newcommand{\ffence}[2]{{\mathbf L}_{{#1}_{#2}}}
\newcommand{\groupcantor}{{\mathbb H}(\cantor)}

\newcommand{\groupfence}{{\mathbb{H}(\fence)}}
\newcommand{\cC}{\mathcal{C}}

\newcommand{\cF}{\mathcal{F}}
\newcommand{\cstructure}{\{(V_n,\Psi_n)\}_{n=0}^{\infty}}
\newcommand{\fstructure}{\{(V_n, \Psi_n,\varphi^L_n,\varphi^U_n)\}_{n=0}^{\infty}}
\newcommand{\csystem}{\{(G_n,\Psi_n)\}_{n=0}^{\infty}}
\newcommand{\fsystem}{\{(G_n, \Psi_n,\varphi^L_n,\varphi^U_n)\}_{n=0}^{\infty}}

\title{Dynamics on Fences}

\author{Jernej \v Cin\v c}

\address[J.\ \v{C}in\v{c}\footnote{https://orcid.org/0000-0001-8516-6023}]{Department of Mathematics and Computer Science, Faculty of Natural Sciences and Mathematics, University of Maribor, Koro\v ska 160, 2000 Maribor, Slovenia -- $\&$ -- Abdus Salam International Centre for Theoretical Physics (ICTP), Trieste, Italy}
\email{jernej.cinc@um.si}

\author{Udayan B. Darji} 

\address[U.\ Darji\footnote{https://orcid.org/0000-0002-2899-919X}]{Department of Mathematics, University of Louisville, Louisville, Kentucky 40292, USA}
\email{ ubdarj01@gmail.com}

\author{Benjamin Vejnar}

\address[B. Vejnar\footnote{https://orcid.org/0000-0002-2833-5385}]{Department of Mathematical Analysis,
Faculty of Mathematics and Physics, Charles University
Prague, Czechia}
\email{vejnar@karlin.mff.cuni.cz}

\date{\today}

\begin{document}

\begin{abstract}
Homeomorphisms of the Cantor set play a central role in topology, dynamical systems and descriptive set theory. In parallel, several classes of fence-like spaces—such as the hairy Cantor set, hairy arcs, Cantor bouquets in  complex dynamics, the Lelek fan in topology and Fra\"iss\'e fence in descriptive set theory—have recently been studied for their rich structural and dynamical properties.
In this paper, we introduce a general construction that associates to each homeomorphism of the Cantor set a canonically defined homeomorphism of a corresponding fence space. This construction lifts dynamical properties from the Cantor set to these fence-like spaces, allowing one to systematically transfer features such as minimality, recurrence, and orbit structure. As a consequence, we obtain a unified framework for studying dynamics on a broad class of fence-like spaces and establish new connections between their topological structure and induced dynamical behavior.
\end{abstract}

\maketitle

\tableofcontents
\section{Introduction}

The interaction between continuum theory and dynamics has produced many examples in which complex topology and dynamics coexist. This is particularly evident in spaces built from arcs with dense sets of endpoints. Such spaces arise in complex dynamics as models for Julia sets and escaping sets of transcendental entire functions, in continuum theory as canonical endpoint-dense continua, and in descriptive set theory via projective Fra\"iss\'e limits. In this paper, we introduce a flexible class of such spaces, called \emph{Scissorhand fences}, and develop a general framework for lifting dynamical systems from the Cantor space to these spaces.

\subsection{Spaces}

Further examples of this phenomenon arise in complex dynamics, where iteration of transcendental entire functions produces spaces with dense endpoint structure. A classical example was studied by Devaney and Krych \cite{devaney1984dynamics}, who considered the exponential family
\[
f_{\lambda}(z) = \lambda e^z, \quad \lambda \in \mathbb{C} \setminus \{0\}.
\]
They showed that for $0<\lambda<1/e$ the Julia set $J(f_{\lambda})$ is a union of uncountably many pairwise disjoint curves (often called \emph{hairs} or \emph{strings}), each homeomorphic to $[0,\infty)$, forming what they termed a \emph{Cantor set of curves}. Similar structures occur for other transcendental entire functions, including $z\mapsto \mu\sin z$ and $z\mapsto \mu\cos z$ for suitable parameters \cite{DevaneyTangerman}. Mayer \cite{Mayer1990}, building on work of Devaney and Goldberg \cite{DevaneyGoldberg}, showed that in these cases the set of endpoints is totally disconnected, while its union with the point at infinity is connected.

A systematic topological study of these spaces was carried out by Aarts and Oversteegen \cite{AO93}. Building on earlier work of Devaney and coauthors~\cite{Devaney1984,DevaneyTangerman,Devaney1993,devaney1984dynamics}, they considered Julia sets of the family
\[
f_{\lambda_1,\lambda_2}(z)=\lambda_1 e^z+\lambda_2 e^{-z}
\]
and showed that all Cantor sets of curves are mutually homeomorphic, both within a given family and across different families, and are in fact ambiently homeomorphic in the plane. Thus, a broad class of Julia sets arising in transcendental dynamics admits a common topological model. This rigidity is also interesting in view of their diverse measure-theoretic properties: for instance, McMullen~\cite{McMullen1987} showed that Cantor sets of curves in the exponential family have Hausdorff dimension $2$ and zero planar Lebesgue measure, whereas examples in the sine family may have positive planar Lebesgue measure~\cite{EremenkoLyubich1992,McMullen1987}.

Aarts and Oversteegen \cite{AO93} also introduced a broader class of spaces, called \emph{hairy objects}, which combine features of the Cantor set and the interval. These spaces are topologically unique and ambiently homeomorphic in the plane. Using this framework, they obtained a complete topological description of Julia sets for many maps in the exponential family.

In what follows, let $\mathbb{I}=\mathbb{R}\setminus\mathbb{Q}$. Aarts and Oversteegen \cite{AO93} introduced several canonical models for endpoint--dense structures arising in transcendental dynamics. One of them is the \emph{straight brush}, a subset $\mathcal{B}\subset [0,\infty)\times \mathbb{I}$ consisting of vertical half--lines (``hairs'') whose endpoints form a dense subset of the base. Each hair has the form
\[
h_a=[t_a,\infty)\times\{a\}, \qquad a\in\mathbb{I},
\]
and the imposed density and closure conditions on the endpoints ensure that $\mathcal{B}$ is a closed subset of the plane with a highly structured endpoint set.

A natural compactification of $\mathcal{B}$ is obtained by adding a single point at infinity; the resulting space is called a \emph{smooth Cantor bouquet}. Such spaces appeared earlier in continuum theory. In particular, Lelek \cite{Lelek1961} constructed a smooth fan with a dense set of endpoints, and later Charatonik \cite{Cha89} and Bula and Oversteegen \cite{oversteegenOneLF} independently proved its topological uniqueness. This continuum is now known in Continuum Theory as the \emph{Lelek fan} (see Figure~\ref{fig:Lelek}).

Let $\cantor$ denote the Cantor space. The \emph{Cantor fence} is any space homeomorphic to $\cantor \times I$. A general theory of fences was developed by Basso and Camerlo \cite{BassoCamerlo}, where a \emph{fence} is defined as a compact metrizable space whose connected components are either points or arcs. In this terminology, a \emph{smooth fence} is, up to homeomorphism, a compact metric subspace of the Cantor fence with components of this form (see \cite[Theorem~4.2]{BassoCamerlo}). 

An analogous representation holds for the fans discussed above: by \cite[Proposition~4]{Cha89}, every \emph{smooth fan} is, up to homeomorphism, a subcontinuum of the Cantor fan, obtained from the Cantor fence by collapsing its base to a point. A \emph{Lelek fence} is a smooth fence whose base is homeomorphic to the Cantor set and whose set of endpoints outside the base is dense; by \cite{Cha89,BO90}, the Lelek fence is unique up to homeomorphism.

	\begin{figure}[h]
    \hspace{1cm}
		\begin{minipage}{.3\textwidth}
			\begin{tikzpicture}[scale=2.5]
			\draw[thick](0,0)--(0,1);
			\draw[thick](1,0)--(1,1);
			\draw[thick](1/3,0)--(1/3,1);
			\draw[thick](2/3,0)--(2/3,1);
			\draw[thick](1/9,0)--(1/9,1);
			\draw[thick](2/9,0)--(2/9,1);
			\draw[thick](7/9,0)--(7/9,1);
			\draw[thick](8/9,0)--(8/9,1);
			
			\draw[thick](1/27,0)--(1/27,1);
			\draw[thick](2/27,0)--(2/27,1);
			\draw[thick](7/27,0)--(7/27,1);
			\draw[thick](8/27,0)--(8/27,1);
			
			\draw[thick](26/27,0)--(26/27,1);
			\draw[thick](25/27,0)--(25/27,1);
			\draw[thick](20/27,0)--(20/27,1);
			\draw[thick](19/27,0)--(19/27,1);
			\node at (1/2,-0.2) {\small  Cantor fence};
			\end{tikzpicture}
		\end{minipage}
		\begin{minipage}{.15\textwidth}
			\begin{tikzpicture}[scale=3]
			\draw[thick](0,0)--(0,1/3);
			\draw[thick](1,0)--(1,1/9);
			\draw[thick](1/3,0)--(1/3,3/4);
			\draw[thick](2/3,0)--(2/3,2/3);
			\draw[thick](1/9,0)--(1/9,3/4);
			\draw[thick](2/9,0)--(2/9,1/9);
			\draw[thick](7/9,0)--(7/9,2/7);
			\draw[thick](8/9,0)--(8/9,4/7);
			
			\draw[thick](1/27,0)--(1/27,1/2);
			\draw[thick](2/27,0)--(2/27,2/9);
			\draw[thick](7/27,0)--(7/27,8/17);
			\draw[thick](8/27,0)--(8/27,2/5);
			
			\draw[thick](26/27,0)--(26/27,3/5);
			\draw[thick](25/27,0)--(25/27,6/7);
			\draw[thick](20/27,0)--(20/27,1/4);
			\draw[thick](19/27,0)--(19/27,1/8);
			\node at (1/2,-0.2) {\small Lelek fence};
			\end{tikzpicture}
		\end{minipage}
		\hspace{2.5cm}
		\begin{minipage}{.15\textwidth}
			\begin{tikzpicture}[scale=3]
			\draw[thick](1/2,0)--(0,1/3);
			\draw[thick](1/2,0)--(1,1/9);
			\draw[thick](1/2,0)--(1/3,3/4);
			\draw[thick](1/2,0)--(2/3,2/3);
			\draw[thick](1/2,0)--(1/9,3/4);
			\draw[thick](1/2,0)--(2/9,1/9);
			\draw[thick](1/2,0)--(7/9,2/7);
			\draw[thick](1/2,0)--(8/9,4/7);
			
			\draw[thick](1/2,0)--(1/27,1/2);
			\draw[thick](1/2,0)--(2/27,2/9);
			\draw[thick](1/2,0)--(7/27,8/17);
			\draw[thick](1/2,0)--(8/27,2/5);
			
			\draw[thick](1/2,0)--(26/27,3/5);
			\draw[thick](1/2,0)--(25/27,6/7);
			\draw[thick](1/2,0)--(20/27,1/4);
			\draw[thick](1/2,0)--(19/27,1/8);
			\node at (1/2,-0.2) {\small  Lelek fan};
			\end{tikzpicture}
		\end{minipage}	
        \label{fig:Lelek}
	\end{figure}

Another compactification of $\mathcal{B}$ considered in \cite{AO93} leads to the notion of a \emph{hairy arc}. In this model, vertical hairs are attached to a base interval according to a length function $\ell:I\to I$, producing a compact subset of the unit square whose fibers are intervals of varying lengths. Since the set of endpoints is dense, this space is homeomorphic to the Lelek fence. Aarts and Oversteegen further proved that all one--sided hairy arcs (i.e.\ planar embeddings of hairy arcs) are ambiently homeomorphic, providing canonical topological models for many Julia sets of transcendental entire functions.

Recently, Cheraghi \cite{Cheraghi2025} proved that irrationally indifferent attractors satisfy a topological trichotomy: the post--critical set is either a Jordan curve, a \emph{one--sided hairy Jordan curve}, or a \emph{Cantor bouquet}. The latter two belong to the class of endpoint--dense spaces studied by Aarts and Oversteegen \cite{AO93}, providing another dynamical setting in which these canonical models arise.  

Furthermore, Cheraghi and Pedramfar \cite{haircantorset} introduced \emph{hairy Cantor sets}, which share many of the structural properties of the spaces described above. They gave an axiomatic characterization of these sets and proved that any two such planar continua are ambiently homeomorphic. By \cite[Corollary~5.6]{haircantorset}, the set of endpoints is dense, and hence the space is homeomorphic to the Lelek fence.

Spaces of the type described above also appear in topological dynamics as almost one-to-one extensions of minimal systems, beginning with the classical Floyd--Auslander systems (see, e.g., \cite{HaddadJohnson97} and references therein) and their later generalizations \cite{DeeleyPutnamStrung}, where the primary emphasis is on dynamical properties. In contrast, Balibrea, Downarowicz, Hric, Snoha, and \v Spitalsk\'y \cite{BalibreaDownarowiczHricSnohaSpitalsky} introduced \emph{cantoroids} to capture the interplay between topology and minimal dynamics. Cantoroids are almost totally disconnected spaces (i.e.\ spaces with dense degenerate components) without isolated points, and may therefore contain uncountably many non-degenerate components. The work \cite{BalibreaDownarowiczHricSnohaSpitalsky} focuses in particular on the construction of minimal (non-invertible) maps on such spaces.

Spaces as described above also arise naturally in descriptive set theory via projective Fra\"iss\'e methods. In the setting of compact metrizable spaces, Basso and Camerlo \cite{BassoCamerlo} introduced fences and smooth fences, characterized smooth fences as those obtained as limits of projective sequences of finite structures, and identified a canonical quotient of the corresponding projective Fra\"iss\'e limit, called the \emph{Fra\"iss\'e fence}. Related ideas were developed by Barto\v{s}ov\'a and Kwiatkowska \cite{BartosovaKwiatkowska2015}, who showed that the Lelek fan arises as a natural quotient of a projective Fra\"iss\'e limit of finite rooted trees and used this representation to study both the space and its homeomorphism group.  

In contrast, our approach is based on a concrete class of fences defined via semicontinuous functions; the spaces we consider form a subclass of the smooth fences studied in \cite{BassoCamerlo}. Our aim is to provide a unified framework covering a broad class of such spaces and to facilitate their dynamical analysis. To this end, we introduce \emph{Scissorhand fences}, namely smooth fences with a dense set of endpoints.

\begin{figure}
	\centering
	\begin{tikzpicture}[scale=5]
	
	\draw[thick] (0,0.289) -- (0,0.5);
	\draw[thick] (1/27,0.1) -- (1/27,0.7);
	\draw[thick] (2/27,0.35) -- (2/27,0.44);
	\draw[thick] (1/9,0.36) -- (1/9,0.55);
	\draw[thick] (2/9,0.32) -- (2/9,0.4);
	\draw[thick] (7/27,0.4) -- (7/27,0.45);
	\draw[thick] (8/27,0.33) -- (8/27,0.63);
	\draw[thick] (1/3,0.2) -- (1/3,0.5);
	\draw[thick] (2/3,0.35) -- (2/3,0.59);
	\draw[thick] (19/27,0.3) -- (19/27,0.87);
	\draw[thick] (20/27,0.39) -- (20/27,0.46);
	\draw[thick] (7/9,0.43) -- (7/9,0.5);
	\draw[thick] (8/9,0.2) -- (8/9,0.75);
	\draw[thick] (25/27,0.33) -- (25/27,0.37);
	\draw[thick] (26/27,0.31) -- (26/27,0.43);
	\draw[thick] (1,0.35) -- (1,0.4);
	
	\draw[thick] (1/54,0.34) -- (1/54,0.426);
	\draw[thick] (5/54,0.37) -- (5/54,0.43);
	\draw[thick] (13/54,0.345) -- (13/54,0.433);
	\draw[thick] (5/18,0.38) -- (5/18,0.472);
	\draw[thick] (17/54,0.295) -- (17/54,0.377);
	\draw[thick] (37/54,0.325) -- (37/54,0.413);
	\draw[thick] (13/18,0.41) -- (13/18,0.494);
	\draw[thick] (41/54,0.355) -- (41/54,0.447);
	\draw[thick] (49/54,0.34) -- (49/54,0.426);
	\draw[thick] (17/18,0.305) -- (17/18,0.393);
	\draw[thick] (53/54,0.355) -- (53/54,0.439);
	\draw[thick] (3/54,0.37) -- (3/54,0.42);
	
	\fill[black] (0.006,0.305) circle (0.006);
	\fill[black] (-0.007,0.382) circle (0.006);

	\fill[black] (1/54+0.007,0.352) circle (0.006);
	\fill[black] (1/54-0.006,0.401) circle (0.006);

	\fill[black] (1/27+0.008,0.145) circle (0.006);
	\fill[black] (1/27-0.007,0.292) circle (0.006);
	\fill[black] (1/27+0.006,0.516) circle (0.006);
	\fill[black] (1/27-0.008,0.665) circle (0.006);

	\fill[black] (3/54+0.007,0.381) circle (0.006);
	\fill[black] (3/54-0.006,0.411) circle (0.006);

	\fill[black] (2/27+0.007,0.362) circle (0.006);
	\fill[black] (2/27-0.006,0.418) circle (0.006);

	\fill[black] (5/54+0.007,0.381) circle (0.006);
	\fill[black] (5/54-0.007,0.421) circle (0.006);

	\fill[black] (1/9+0.007,0.388) circle (0.006);
	\fill[black] (1/9-0.007,0.471) circle (0.006);
	\fill[black] (1/9+0.006,0.538) circle (0.006);

	\fill[black] (13/54+0.007,0.359) circle (0.006);
	\fill[black] (13/54-0.006,0.421) circle (0.006);

	\fill[black] (2/9+0.006,0.334) circle (0.006);
	\fill[black] (2/9-0.007,0.389) circle (0.006);

	\fill[black] (5/18+0.006,0.396) circle (0.006);
	\fill[black] (5/18-0.007,0.454) circle (0.006);

	\fill[black] (7/27+0.007,0.413) circle (0.006);
	\fill[black] (7/27-0.006,0.442) circle (0.006);

	\fill[black] (17/54+0.006,0.309) circle (0.006);
	\fill[black] (17/54-0.007,0.365) circle (0.006);

	\fill[black] (8/27+0.007,0.347) circle (0.006);
	\fill[black] (8/27-0.007,0.442) circle (0.006);
	\fill[black] (8/27+0.006,0.611) circle (0.006);

	\fill[black] (1/3+0.007,0.228) circle (0.006);
	\fill[black] (1/3-0.006,0.356) circle (0.006);
	\fill[black] (1/3+0.007,0.486) circle (0.006);

	\fill[black] (2/3+0.007,0.368) circle (0.006);
	\fill[black] (2/3-0.006,0.487) circle (0.006);
	\fill[black] (2/3+0.006,0.575) circle (0.006);

	\fill[black] (37/54+0.007,0.339) circle (0.006);
	\fill[black] (37/54-0.006,0.401) circle (0.006);

	\fill[black] (19/27+0.007,0.322) circle (0.006);
	\fill[black] (19/27-0.007,0.458) circle (0.006);
	\fill[black] (19/27+0.006,0.668) circle (0.006);
	\fill[black] (19/27-0.006,0.842) circle (0.006);

	\fill[black] (13/18+0.006,0.424) circle (0.006);
	\fill[black] (13/18-0.007,0.481) circle (0.006);

	\fill[black] (20/27+0.007,0.401) circle (0.006);
	\fill[black] (20/27-0.006,0.451) circle (0.006);

	\fill[black] (41/54+0.007,0.369) circle (0.006);
	\fill[black] (41/54-0.006,0.439) circle (0.006);

	\fill[black] (7/9+0.007,0.441) circle (0.006);
	\fill[black] (7/9-0.006,0.492) circle (0.006);

	\fill[black] (8/9+0.007,0.226) circle (0.006);
	\fill[black] (8/9-0.007,0.389) circle (0.006);
	\fill[black] (8/9+0.006,0.572) circle (0.006);
	\fill[black] (8/9-0.006,0.731) circle (0.006);

	\fill[black] (49/54+0.007,0.353) circle (0.006);
	\fill[black] (49/54-0.006,0.419) circle (0.006);

	\fill[black] (25/27+0.007,0.341) circle (0.006);
	\fill[black] (25/27-0.006,0.366) circle (0.006);

	\fill[black] (17/18+0.006,0.318) circle (0.006);
	\fill[black] (17/18-0.007,0.386) circle (0.006);

	\fill[black] (26/27+0.007,0.327) circle (0.006);
	\fill[black] (26/27-0.006,0.421) circle (0.006);

	\fill[black] (53/54+0.006,0.367) circle (0.006);
	\fill[black] (53/54-0.007,0.433) circle (0.006);

	\fill[black] (1-0.008,0.362) circle (0.006);
	\fill[black] (1-0.006,0.394) circle (0.006);
    \node at (1/2,0) {\small Fra\"iss\'e fence};
	\end{tikzpicture}
    \label{fig:Fraisse}
\end{figure}

A fence over a compact metric space $X$ is determined by a pair $\Phi=(\varphi^L,\varphi^U)$, where $\varphi^L,\varphi^U:X\to[0,1]$ are lower and upper semicontinuous functions with $\varphi^L\leq\varphi^U$, and
\[
\fence=\{(x,t):x\in X,\ \varphi^L(x)\le t\le \varphi^U(x)\}.
\]
Throughout the paper we assume that $X$ is the Cantor space $\cantor$. A \emph{Scissorhand fence} is a fence over $\cantor$ such that the graph of $\varphi^U$ is dense in $\fence$ and the set
\[
\{x\in\cantor:\varphi^L(x)\neq\varphi^U(x)\}
\]
is dense in $\cantor$. If, in addition, the graph of $\varphi^L$ is dense in $\fence$, we call $\fence$ a \emph{two-sided Scissorhand fence}.

Several standard spaces arise as special cases of this construction. For instance, if $\varphi^L\equiv 0$ and $\varphi^U\equiv 1$, then $\fence$ is the \emph{Cantor fence}. Collapsing the Cantor base to a point yields the \emph{Cantor fan}.

For $x\in\cantor$, define the \emph{fiber of $\fence$ at $x$} by
\[
\fence(x):=\{x\}\times\{t\in[0,1]:(x,t)\in\fence\}.
\]

A fence $\fence$ is a Fra\"iss\'e fence if and only if for every $x\in\cantor$ and every continuum $I\subseteq \fence(x)$, there exists a sequence $\{x_n\}\subset \cantor\setminus\{x\}$ with $x_n\to x$ such that each $\fence(x_n)$ is an arc and $\fence(x_n)\to I$ in the Hausdorff metric.

In this terminology, the Fra\"iss\'e fence is a two-sided Scissorhand fence, while the Lelek fence is a Scissorhand fence that is not two-sided.

In the first part of the paper we develop a structural theory of Scissorhand fences. We show that the projections of degenerate components form a dense \(G_\delta\) subset of the Cantor base and, in the two-sided case, the degenerate components themselves form a dense \(G_\delta\) subset of the fence (Proposition~\ref{prop:pointcompdense}). Recall that a map \(f:X\to Y\) is an \emph{almost one-to-one extension} if the set \(\{\, y\in Y : f^{-1}(y)\text{ is a singleton} \,\}\) is dense in \(Y\). As a consequence, Scissorhand fences are almost one-to-one extensions of the Cantor space.

We also provide an inverse limit construction of fences defined by semicontinuous mappings over the Cantor space. This construction yields precise control over the fibers and their density properties and leads to a characterization of the fences considered here (Theorem~\ref{thm:fenceconst}). Within this framework, we recover standard examples such as the Lelek fence (Example~\ref{ex:LeLekFrA}) and the Fra\"iss\'e fence (Example~\ref{ex:FraisseFence}), as well as new examples of two-sided Scissorhand fences that are not Fra\"iss\'e fences (Example~\ref{ex:bifusednotfraisse}).

\subsection{Dynamics on spaces}

Some of the spaces described above have also been studied from a dynamical perspective. Aarts and Oversteegen \cite{AartsOversteegen95} showed that the homeomorphism group of the hairy arc is one-dimensional and totally disconnected. Barto\v{s}ov\'a and Kwiatkowska \cite{BartosovaKwiatkowska2015} identified the universal minimal flow of the homeomorphism group of the Lelek fan as the natural action on the compact space of maximal chains of subcontinua containing the top point.

Important connections with complex dynamics arise from the study of escaping sets of transcendental entire functions. Eremenko \cite{Eremenko1989} conjectured that every component of the escaping set
\[
I(f)=\{z\in\mathbb C : f^n(z)\to\infty\}
\]
is unbounded. Rottenfußer, Rückert, Rempe, and Schleicher \cite{RottenfusserRuckertRempeSchleicher} established strong forms of this conjecture for broad classes of functions of bounded type. In particular, for functions of finite order, the escaping set consists of injective curves, called \emph{dynamic rays}, each tending to infinity, whose union (together with possible endpoints) forms a Cantor bouquet in the sense of Aarts and Oversteegen \cite{AO93}. 

Barański, Jarque, and Rempe \cite{BaranskiJarqueRempe2012} showed that the Julia set of any bounded-type finite-order transcendental entire function contains a Cantor bouquet and that, in the disjoint-type case, the entire Julia set has this structure. More recently, Pardo-Sim\'on and Rempe \cite{MashaelRempeSixsmith2022} proved that, within the disjoint-type class, a transcendental entire function has a Julia set homeomorphic to a \emph{Cantor bouquet} if and only if it is \emph{criniferous}, i.e.\ every escaping point eventually lies on a dynamic ray. Under mild geometric assumptions, they also showed that the \emph{head-start condition}, previously known to be sufficient, is in fact necessary for the Julia set to be a Cantor bouquet.

Minimal dynamical systems on spaces with arc-like fibers have been studied extensively. Floyd \cite{Floyd} constructed a non-homogeneous minimal extension of an odometer, now known as a Floyd fence. Auslander \cite{Auslander1959} introduced a minimal mean-$L$-stable but non-distal system projecting onto the triadic adding machine, later generalized by Haddad and Johnson \cite{HaddadJohnson97}. Deeley, Putnam, and Strung \cite{DeeleyPutnamStrung} further developed constructions of minimal extensions with controlled fiber structure. 

Balibrea, Downarowicz, Hric, Snoha, and \v{S}pitalsk\'y \cite{BalibreaDownarowiczHricSnohaSpitalsky} showed that minimal Cantor systems admit extensions to minimal \emph{non-invertible} maps on almost totally disconnected spaces (cantoroids), a class that includes the Fra\"iss\'e fence but not the Lelek fence. The structure of Floyd--Auslander systems was later analyzed in detail by V\'ybo\v{s}tok \cite{Vybostok}.

Recent work in topological dynamics shows that the Lelek fan supports rich dynamical behavior. Banič, Erceg, Kennedy, Mouron, Nall, and Jelić constructed transitive and mixing homeomorphisms on the Lelek fan and related smooth fans \cite{banic2023transitive,banic2024chaos,banic2025fraisse}. Oprocha \cite{Oprocha2024} constructed a completely scrambled weakly mixing homeomorphism of the Lelek fan.

Our goal is to develop a systematic framework for lifting dynamics from the Cantor space $\cantor$ to Scissorhand fences $\fence$. As a first step (Section~\ref{sec:Dynamics that directly lifts}), we show that certain dynamical properties can be deduced without explicitly constructing the lifting. In particular, for two-sided Scissorhand fences, if a continuous surjection on the Cantor base admits a lifting, then transitivity and minimality are preserved, and, in the case of homeomorphisms, chaotic behavior is also inherited (Theorem~\ref{thm:GenFraisse}). Moreover, combining results of Bowen \cite{Bowen} and Kolyada and Snoha \cite{KolyadaSnoha}, we show that when the fiber dynamics is given by homeomorphisms, the lifted system preserves topological entropy (Proposition~\ref{prop:entropy}).

In Section~\ref{sec:General theorem} we prove a realization theorem (Theorem~\ref{thm:mainF}) providing a systematic method for lifting maps from the Cantor space to Scissorhand fences. Using inverse limit representations of Cantor systems \cite{AkinGlasnerWeiss,DarjiBernardes,shimomuraGraphCover}, we introduce $\cF$-systems encoding both the geometry of the fence and the induced dynamics (Definition~\ref{def:fsystem}), and producing a fence $\fence$ together with a map on $\fence$. 
More precisely, under Condition~$\Gamma$ (see \eqref{eq:gamma}), an $\cF$-system determines a uniquely defined continuous surjection on $\fence$ with the original Cantor map as a factor, preserving topological entropy. The construction also preserves key structural properties: homeomorphisms lift to homeomorphisms, Lipschitz and bi-Lipschitz regularity are retained under the corresponding assumptions, and, in the case of unit scaling, isometries lift to isometries.

The subsequent sections illustrate applications of this framework in concrete settings. We focus on the Lelek fence, which is homeomorphic to the hairy Cantor set \cite{haircantorset} and the hairy arc \cite{AO93}, and on the Fra\"iss\'e fence, as these are precisely the cases where a topological characterization allows us to identify the resulting spaces. These examples are not exhaustive, and further applications are possible.

In Section~\ref{sec: Applications to dynamics on isometries} we apply the realization theorem to Cantor isometries with nowhere dense orbits. We show that such systems admit liftings to both the Fra\"iss\'e fence (Theorem~\ref{thm:liftingisometriesFraisse}) and the Lelek fence (Theorem~\ref{thm:liftingisometriesLelek}) that remain isometries and preserve the factor structure. In the Fra\"iss\'e case, the construction allows one to prescribe that the lower and upper endpoint functions coincide on a given union of invariant nowhere dense sets, while in the Lelek case the upper endpoint function can be chosen strictly positive on such sets. 
As a consequence (see Remark~\ref{rem:isometries}), periodic points of the lifted systems are localized along fibers over periodic points of the base in the Lelek case, and on degenerate components in the Fra\"iss\'e case.

In Section~\ref{sec: Applications to dynamics on Lelek fence} we study the lifting of specific dynamical properties from Cantor systems to the Lelek fence. We show that transitivity can be lifted so that a prescribed upper endpoint $(x,\varphi^U(x))$ is a transitive point of the lifted system (Theorem~\ref{thm:transitive}). We then consider chaotic dynamics, proving that every chaotic Cantor homeomorphism admits a chaotic lifting to the Lelek fence that preserves the factor relation (Theorem~\ref{thm:chaotic}). Finally, we address topological mixing: under an additional recurrence condition involving invariant nowhere dense subsets, a broad class of topologically mixing Cantor homeomorphisms admits liftings that are themselves topologically mixing on the Lelek fence (Theorem~\ref{thm:mixing}). In particular, this applies to shift homeomorphisms (see Example~\ref{rmk:mixing}). By collapsing the Cantor base to a point, the statements of Theorems~\ref{thm:liftingisometriesLelek}, \ref{thm:transitive}, \ref{thm:chaotic}, and \ref{thm:mixing} also hold for the Lelek fan, which is homeomorphic to the Cantor bouquet.

In Section~\ref{sec: Applications to dynamics on Fraisse fence} we generalize the realization method from Section~\ref{sec:General theorem} to obtain finer control over the resulting dynamics, applicable also to two-sided Scissorhand fences (Theorem~\ref{thm:main}). As an application, we show that odometer Cantor systems admit liftings to minimal homeomorphisms of the Fra\"iss\'e fence (Theorem~\ref{thm:minimalfraisse}), providing the first such examples on this space. Moreover, this construction yields uncountably many pairwise non-conjugate minimal homeomorphisms on the Fra\"iss\'e fence, none of which factors onto another (Corollary~\ref{cor:Fraisse minimal}).

\section{Fences}
\subsection{Definition of fences and fans}
A \emph{Cantor space} is a compact metric space, with a countable basis of clopen sets and having no isolated points. By Brouwer's theorem, up to homeomorphism there is only one such space and they are all homeomorphic to the standard middle 1/3 Cantor set on the real line. There are various models of Cantor space. We use the one that is most suitable in a given context. We use $\cantor$ to denote a \emph{Cantor space}.

\begin{definition}
For a pair of functions  $\Phi=(\varphi^L,\varphi^U)$ where $\varphi^L,\varphi^U: X \rightarrow [0,1]$, $\varphi^{L}\leq \varphi^U$ and $\varphi^{L}, \varphi^U$ are lower  and upper semicontinuous, respectively,   we let
\[ \fence = \{(x,t): x \in X, \varphi^L(x) \le t \le \varphi^U(x) \}. \] 
 ${\bf F}_{\Phi}$ is a compact subspace of $X \times [0,1]$ and will be called a \emph{fence over $X$}.
\end{definition}

 Our definition of fences is concrete as opposed to the ones defined in \cite{BassoCamerlo}. Our  definition of fences forms a subcollection of smooth fences in \cite{BassoCamerlo}. Our main objective is to construct homeomorphisms with interesting dynamical properties on well-studied fences and fans. Henceforth, we will assume that $X$ is the Cantor space $\cantor$.

There are already well-studied examples of such fences in the literature. For example, when $\varphi^L=0$ and $\varphi^U=1$ then $\fence$ is the \emph{Cantor fence}. If we quotient the base Cantor space to a point we obtain the \emph{Cantor fan}.


Motivated by Complex Dynamics, an object called the Hairy Cantor set was extensively studied in \cite{haircantorset}. 
\emph{Hairy Cantor set}, a set in the plane, was defined axiomatically \cite{haircantorset} as a certain type of a compact subset of the plane and it was shown that any two such Hairy Cantor sets are ambiently homeomorphic. 
It turns out (using  Corollary 5.6 in \cite{haircantorset} or arguments similar to that of \cite{oversteegenOneLF}) that Hairy Cantor set is homeomorphic to the set
\[ \ffence{\varphi}{} = \{(x,t): x \in C, 0 \le t \le \varphi(x) \} \]
where $C$ is a standard Cantor set and $\varphi: C \rightarrow [0,1]$ is an upper semicontinuous function such that $\varphi$ is zero on a dense set, positive on a dense set and the graph of $\varphi$ is dense in $\ffence{\varphi}{}$. 
We work with the above   abstract model of the Hairy Cantor set and call it the Lelek fence.

\begin{definition}
A \emph{Lelek fence} is a fence $\fence$ where $\Phi=(0,\varphi^U)$ and $\varphi^U$ is positive on a dense set and the graph of $\varphi^U$ is dense in $\fence$. 
\end{definition}

\begin{remark}
 If  $\fence$ is a Lelek Fence,  then the  function $\varphi^U$ is zero on a dense $G_\delta$ set. Indeed, for each $r >0$, set $U_r = \{x: \varphi^U  (x) < r \}$ is open as $\varphi^U$ is upper semi-continuous and dense in $\cantor$ as the graph of $\varphi^U$ is dense in $\fence$. Hence, $\cap_{r > 0} U_r$ is a dense $G_{\delta}$ set where  $\varphi^U$ is zero.
\end{remark}

As discussed earlier, Lelek fan, a cousin of Lelek fence, has been extensively studied from various perspectives in Topological Dynamics. 
When the base of Lelek fence is identified to a point, we obtain the Lelek fan, a compact connected set with properties similar to that of Lelek fence. Namely, \emph{Lelek fan} is a set homeomorphic to the quotient $\ffence{\varphi}{}/\mathcal{E}$ where $\ffence{\varphi}{}$ is a Hairy Cantor set and 
\[ \mathcal{E} = \{ ((x,0), (y,0)) \in \ffence{\varphi}{}^2: x, y \in \cantor \} \cup \{ ((x,s),(x,s)) \in \ffence{\varphi}{}^2 : x, y \in \cantor \}.\]
All Lelek fans are homeomorphic to each other \cite{oversteegenOneLF}. 

Fra\"iss\'e fence \cite{BassoCamerlo}, an interesting object arising from descriptive set-theoretic studies of projective Fra\"iss\'e limit, was introduced by Basso and Camerlo. Numerous properties of Fra\"iss\'e fence were proved in \cite{BassoCamerlo}, including uniqueness and certain types of homogeneity and universality. 
A topological characterization of Fra\"iss\'e fence  was given in \cite[Theorem 5.3]{BassoCamerlo}. Motivated by this characterization, we give a concrete, geometric formulation of Fra\"iss\'e fence as below. For an $x\in \cantor$ let

\[
\fence (x) := \{t \in [0,1]: (x,t) \in \fence\}.
\]

\begin{definition}\label{def:fraisse}
    A fence $\fence$ is a Fra\"iss\'e fence if and only if 
 for each $x \in \cantor$ and each continuum $I \subseteq \fence (x)$, there is a sequence $\{x_n\}$ in $\cantor \setminus \{x\}$ converging to $x$ such that each $\fence (x_n)$ is an arc, and $\{\fence (x_n)\}_{n \in \N}$ converge to $I$ in the Hausdorff metric. 
\end{definition}

Using the definition of Hausdorff metric one can easily verify that our definition of Fra\"iss\'e fence agrees with the following formulation of Fra\"iss\'e fence given in {\cite[Theorem 5.3]{BassoCamerlo}.  
\begin{proposition}\label{prop:denseG_delta}
Let $\Phi=(\varphi^L,\varphi^U)$. A fence $\fence$ is a Fra\"iss\'e fence if and only if for any two open sets $O,O'\subset {\bf F}_{\Phi}$ which meet a common connected component of ${\bf F}_{\Phi}$, there is an arc component $A$ of  ${\bf F}_{\Phi}$ such that one endpoint of $A$ 
belongs to $O$ and the other endpoint of $A$ belongs to $O'$.  
\end{proposition}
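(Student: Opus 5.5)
We prove the two implications separately, working directly with the Hausdorff metric on subsets of $[0,1]$. The components of $\fence$ are exactly the fibers $\{x\}\times\fence(x)$: each fiber is a point or an arc, and distinct fibers lie over distinct points of the totally disconnected base. The arc components are those fibers with $\varphi^L(x)<\varphi^U(x)$, and their endpoints are $(x,\varphi^L(x))$ and $(x,\varphi^U(x))$.

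\emph{Definition~\ref{def:fraisse} $\Rightarrow$ the stated property.} Suppose $O,O'$ are open sets both meeting the fiber over $x$, say $(x,s)\in O$ and $(x,s')\in O'$, with $s\le s'$. Let $I=[s,s']\subseteq\fence(x)$; this is a continuum, possibly degenerate. By Definition~\ref{def:fraisse} there are $x_n\to x$ with $x_n\neq x$ such that each $\fence(x_n)$ is an arc and $\fence(x_n)\to I$ in the Hausdorff metric. Hausdorff convergence of intervals $[a_n,b_n]\to[s,s']$ is equivalent to $a_n\to s$ and $b_n\to s'$. Hence $(x_n,\varphi^L(x_n))\to(x,s)$ and $(x_n,\varphi^U(x_n))\to(x,s')$. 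For large $n$ the lower endpoint lies in $O$ and the upper endpoint lies in $O'$, so the arc component $\{x_n\}\times\fence(x_n)$ is the required $A$. The labels of $O$ and $O'$ may be swapped if $s>s'$.

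\emph{The stated property $\Rightarrow$ Definition~\ref{def:fraisse}.} Fix $x$ and a continuum $I=[s,s']\subseteq\fence(x)$. For each $k$, let $O_k$ and $O'_k$ be the intersections with $\fence$ of the $1/k$-balls around $(x,s)$ and $(x,s')$. Both meet the component over $x$, so there is an arc component $\{y_k\}\times[\varphi^L(y_k),\varphi^U(y_k)]$ with one endpoint in $O_k$ and the other in $O'_k$. Then $y_k\to x$, and the endpoint set $\{\varphi^L(y_k),\varphi^U(y_k)\}$ is within $1/k$ of $\{s,s'\}$ in the appropriate order. For $s<s'$ and large $k$, the ordering is forced, giving $\varphi^L(y_k)\to s$ and $\varphi^U(y_k)\to s'$. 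For $s=s'$ it is automatic. In either case $\fence(y_k)\to I$ in the Hausdorff metric.

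\emph{The main obstacle} is ensuring $y_k\neq x$. This matters only when $\fence(x)$ is itself an arc; if the fiber over $x$ is a point, then $y_k\ne x$ automatically since $y_k$ carries an arc. If the fiber over $x$ is an arc, the chosen component might be that fiber itself. To exclude this, we shrink the opens: pick $s_1<s_1'$ with $s_1,s_1'$ within $1/k$ of $s$ and $s'$ respectively, so that $(s_1,s_1')\ne(\varphi^L(x),\varphi^U(x))$. Use $1/k$-neighbourhoods small enough that neither neighbourhood contains an endpoint of the arc over $x$, i.e., balls around $(x,s_1),(x,s_1')$ with $s_1$ strictly interior when possible.

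If $I$ is the full fiber $\fence(x)$, this shrinking does not work directly, since we need arcs converging to the whole fiber. In that case we instead choose $s_1\in(\varphi^L(x),\varphi^L(x)+1/k)$ and $s_1'$ near $\varphi^U(x)$ and strictly inside the fiber. Take balls of radius $r$ smaller than the distance of $s_1$ and $s_1'$ from $\varphi^L(x)$ and $\varphi^U(x)$. The arc over $x$ then has no endpoint in these balls, so the resulting $y_k$ differs from $x$, while its endpoints are still within $2/k$ of $s$ and $s'$. Diagonalizing over $k$ yields $x_n\to x$ with $x_n\neq x$, arc fibers, and Hausdorff convergence to $I$.
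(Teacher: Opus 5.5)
Your proof is correct and is the direct Hausdorff-metric verification that the paper only asserts without writing out (``one can easily verify''). It rests on $d_H([a,b],[c,d])=\max(|a-c|,|b-d|)$ and on the components of $\fence$ being exactly the fibers; your handling of $y_k\neq x$ is also right, although the paragraph before the full-fiber case is muddled, and the final device works uniformly for every $I$ whenever $\fence(x)$ is nondegenerate (centers at interior points $s_1\le s_1'$ within $1/k$ of $s,s'$, radius below their distances to $\varphi^L(x),\varphi^U(x)$).
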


We now introduce a natural larger class of fences which includes Fra\"iss\'e fence and Lelek Fence. 
These fences can be thought of as  floating Lelek fences, i.e., arc components roam freely in the fence in a dense way. 

\begin{definition}\label{def:scissorhand}
A \emph{Scissorhand Fence} ({\bf SF}) is  $\fence$, where $\Phi=(\varphi^L,\varphi^U)$, such that
the graph of $\varphi^U$ is dense in $\fence$, and 
$\{x \in \cantor: \varphi^L(x) \neq \varphi^U (x) \} $ is dense in $\cantor$. Moreover, if, in addition, the graph of $\varphi^L$ is dense in $\fence$, we call $\fence$ \emph{two-sided Scissorhand Fence} ({\bf TSF}).
\end{definition}

It is clear that every Fra\"iss\'e fence is a two-sided Scissorhand fence. In Example~\ref{ex:bifusednotfraisse} we will show that there exists a  two-sided Scissorhand fence which is not homeomorphic to the Fra\"iss\'e fence. Hence, this class of fences is more general. The following proposition illuminates further property of Scissorhand fences and two-sided Scissorhand fences. For the case of Fra\"iss\'e fence, the following proposition was proven in \cite[Proposition 5.19]{BassoCamerlo}. 

\begin{proposition}\label{prop:pointcompdense} Let $\fence$ be a fence. 
\begin{enumerate}

\item If $\fence$ is a  Scissorhand fence, then \[ {\bf D}_1 =\left \{x \in \cantor: \{y\} = \fence(x)\right \} \] is dense $G_{\delta}$ in $\cantor$.
    \item If $\fence$ is a two-sided Scissorhand fence, then the set \[  {\bf D}_2 = \left \{(x,y) \in \fence: \{y\} = \fence(x)\right \} \] is dense $G_{\delta}$ in $\fence.$
\end{enumerate}
\end{proposition}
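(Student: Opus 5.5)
The set in (1) is ${\bf D}_1=\{x\in\cantor:\varphi^L(x)=\varphi^U(x)\}$. I will write it as a countable intersection of open sets and then show each of these sets is dense. For $r>0$ put
\[ U_r=\{x\in\cantor:\varphi^U(x)-\varphi^L(x)<r\}. \]
Since $\varphi^U$ is upper semicontinuous and $-\varphi^L$ is upper semicontinuous, the difference $\varphi^U-\varphi^L$ is upper semicontinuous. Hence each $U_r$ is open, and ${\bf D}_1=\bigcap_{n}U_{1/n}$ is $G_\delta$.

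For density, fix a nonempty clopen $V\subseteq\cantor$ and $r>0$. By hypothesis there is $x_0\in V$ with $\varphi^L(x_0)<\varphi^U(x_0)$. Choose a point $t_0$ strictly between them, with $t_0>\varphi^U(x_0)-r$ and $t_0\geq\varphi^L(x_0)+$ a small margin. Then $(x_0,t_0)\in\fence$ and lies in the fiber's interior. Since the graph of $\varphi^U$ is dense in $\fence$, there is $x\in V$ with $|\varphi^U(x)-t_0|<\eps$, as close to $x_0$ as we like. By lower semicontinuity of $\varphi^L$ at $x_0$ we have $\varphi^L(x)>\varphi^L(x_0)-\eps$ for $x$ near $x_0$. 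This only bounds $\varphi^L(x)$ from below, which is the wrong direction.

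So I will instead use density of the graph of $\varphi^U$ to push points low. Pick $t_0=\varphi^L(x_0)$, which is in the fiber. We get $x$ near $x_0$ with $\varphi^U(x)<\varphi^L(x_0)+\eps$, and lower semicontinuity gives $\varphi^L(x)>\varphi^L(x_0)-\eps$. Thus $\varphi^U(x)-\varphi^L(x)<2\eps\le r$ and $x\in U_r\cap V$. This argument does not even need the density of non-degenerate fibers. By the Baire category theorem, ${\bf D}_1$ is dense $G_\delta$.

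For (2), note ${\bf D}_2=\pi^{-1}({\bf D}_1)\cap\fence$, where $\pi$ is the projection onto $\cantor$, and write
\[ {\bf D}_2=\bigcap_n W_n,\qquad W_n=\{(x,t)\in\fence: x\in U_{1/n}\}. \]
Each $W_n$ is relatively open in $\fence$, so ${\bf D}_2$ is $G_\delta$. Since $\fence$ is compact metric, Baire applies, and it suffices to show each $W_n$ is dense.

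Take a basic open set $O=(V\times J)\cap\fence\neq\emptyset$, with $J$ an open interval. Pick $(x_0,t_0)\in O$, and split into two cases.

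\emph{Case $t_0\le\varphi^U(x_0)$ and we aim from above.} Use density of the graph of $\varphi^L$, which is available in the two-sided case. It gives $x\in V$ near $x_0$ with $\varphi^L(x)\in J$ close to $t_0$. Upper semicontinuity of $\varphi^U$ only gives $\varphi^U(x)<\varphi^U(x_0)+\eps$, which is not enough unless $t_0=\varphi^U(x_0)$.

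The main obstacle is therefore handling interior points $t_0$. I would handle it in two steps:
\begin{itemize}
\item First, replace $(x_0,t_0)$ by a point of the graph of $\varphi^L$ inside $O$, using density of that graph.
\item Then, with $t_0=\varphi^L(x_0)$, make $\varphi^U(x)$ small. The trick is to apply density of the graph of $\varphi^U$ to the point $(x_0,\varphi^L(x_0))$. This yields $x$ in a smaller clopen neighbourhood with $\varphi^U(x)$ within $\eps$ of $\varphi^L(x_0)$. Lower semicontinuity of $\varphi^L$ gives $\varphi^L(x)>\varphi^L(x_0)-\eps$.
\end{itemize}
Hence $\varphi^L(x)\le\varphi^U(x)$ both lie in $(\varphi^L(x_0)-\eps,\varphi^L(x_0)+\eps)\subseteq J$. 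So $(x,\varphi^U(x))\in O\cap W_n$ once $2\eps<1/n$, which gives density of $W_n$ and completes the proof.
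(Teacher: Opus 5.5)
Your proof is correct and follows essentially the same route as the paper. The sets $\{x:\varphi^U(x)-\varphi^L(x)<r\}$ (the paper's $\pi_1(U_r)$) are open by semicontinuity and dense by applying density of the graph of $\varphi^U$ at a lower endpoint $(x_0,\varphi^L(x_0))$ together with lower semicontinuity of $\varphi^L$; in the two-sided case you first move to a point of the graph of $\varphi^L$ inside the given open set, as the paper does. In a write-up, delete the abandoned attempts (the first density try in (1) and the ``aim from above'' case in (2)), since only your final arguments are needed.
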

\begin{proof}
Let $\fence$ be a fence.
For each $r>0$, let 
    \[U_{r}=\{(x,y) \in \cantor \times [0,1]: y \in \fence(x),\  diam (\fence(x)) < r \}.
    \]
    First we show that $U_r$ is open. Indeed, let $(x,y) \in U_r$, $a = \varphi^L(x)$, $b= \varphi^{U}(x)$, $a_1<a$, $b_1>b$ such that $b_1-a_1<r$. By the fact that $\varphi^U,\varphi^L$ are upper and lower semi-continuous, respectively, we can find an open set $O$ in $\cantor$ containing $x$ such that for all $x' \in O$, we have that $\varphi^U (x)<b_1$, and $\varphi^L(x) >a_1$. Then, $[O \times (a_1, b_1)] \cap \fence$ is an open set containing $(x,y)$ and a subset of $U_r$.
 
    Now assume that $\fence$ is a Scissorhand fence. We next show that given $(x, \varphi^L(x))$ and $\varepsilon >0$, there is a point of $U_r$ within $\varepsilon$ of $(x,\varphi^L(x))$. We may assume that $\varepsilon < r/2$. As $\varphi^L$ is lower semi-continuous, there is an open neighborhood $O$  of $x$ with diameter less than $\varepsilon$ such that  for all $t \in O$, we have that $\varphi^L(t) > \varphi^L(x) - \varepsilon$. Let $(a,b)$ be  a neighborhood of $\varphi^L(x)$ in $[0,1]$ with diameter less than $\varepsilon$.  As the graph of $\{(t,\varphi^U(t)): t \in \cantor\}$ is dense in $\fence$, there is $(x',y') \in [O \times (a,b)] \cap \fence$ such that $y' = \varphi^U(x')$. As $x' \in O$, we have that $\varphi^L(x') >\varphi^L(x) - \varepsilon$, implying that $\varphi^U(x')- \varphi^L(x')< 2 \varepsilon < r$ and $(x',y') \in U_r$.

    From above we have that $ \pi_1(U_r)$, the projection of $U_r$ onto the first coordinate, is open and dense in $\cantor$. Moreover, ${\bf D_1} = \cap_{r=1}^{\infty} \pi_1(U_r)$, a dense, $G_{\delta}$ subset of $\cantor$.

    To see (2) from the observation above and the fact that ${(x,\varphi^L(x)): x \in \cantor}$ is dense in $\fence$, we have that $U_r$ is dense  and open in $\fence$. As ${\bf D_2} = \cap_{r=1}^{\infty} U_r$, we have that ${\bf D_2}$ is a dense $G_{\delta}$ subset of $\fence$.

\end{proof}

We now introduce some techniques for constructing variety of fences using inverse limit spaces. These techniques will be expanded later to construct maps on fences with various dynamics.

\subsection{Inverse limit construction of fences}
Cantor space $\cantor$ can also be constructed as an inverse limit space. The following definition captures this.
\begin{definition}\label{def:cstructure}
    A \emph{$\cC$-structure} is a sequence $\cstructure$ where $V_n$ is a finite set and $\Psi_n:V_{n+1} \rightarrow V_n$ is a surjective map satisfying the following condition:
    \begin{itemize}
        \item for each $n \in \N$ and $v \in V_n$, there exists $m >n$ and $v' \neq v'' \in V_{m+1}$ such that $\Psi_n^m (v') = \Psi_n^m(v'') = v$ (here, $\Psi^m_n:=\Psi_n\circ\ldots \circ \Psi_m$).
    \end{itemize}
    The inverse limit $X =\underleftarrow{\lim} (V_n, \Psi_n)$ is a Cantor space.
    If $x \in X$ and $n \in \N$, then $x(n)$ is the $n$-th coordinate of $x$. Moreover, for $v \in V_n$, we let $[v] = \{x\in X: x(n) = v\}$.
\end{definition}

Based on $\cC$-structure we define $\cF$-structure which yields a general inverse limit type construction of fences.

\begin{definition}\label{def:fstructure} An \emph{ $\cF$-structure} is a sequence $\fstructure$ consisting of $\cC$-structure $\cstructure$, and mappings $\{\varphi^U_n:V_n\to [0,1]\}$ and $\{\varphi^L_n:V_n \to [0,1]\}$ satisfying the following conditions.
    \begin{enumerate}
    \item $\varphi^{L}_n(v) \le \varphi^U_n(v)$ for all $n\in \N$ and $v\in V_n$,
    \item $\varphi^U_{n+1}(v')\leq \varphi^U_{n}(v)$ and $\varphi^L_{n+1}(v')\geq \varphi^L_{n}(v)$  whenever $\Psi_n(v')=v$.
\end{enumerate}
Let $\varphi^U$ and $\varphi^L$ be the limit of $\{\varphi^U_n\}$ and $\{\varphi^L_n\}$, respectively. Then, $\varphi^U$ and $\varphi^L$ are upper and lower semicontinuous, respectively and $\fence$ is a fence, where $\Phi=(\varphi^L,\varphi^U)$.
\end{definition}

Next theorem gives conditions which characterizes certain types of fences. However, first we  define some parameters.
\begin{definition}\label{def:etaparameters}

For $n \in \N$, $g \in G_n$ and an interval $I \subseteq [\varphi_{n}^L (g), \varphi_{n}^U (g)]$, let 

\[ \eta(g, I)= \inf \left \{d_H \left (I, [\varphi_{n+1}^L (g'), \varphi_{n+1}^U (g')] \right ):  \ g' \in G_{n+1}, \  \Psi_n(g') = g  \right \}.
\]
For $n \in \N$, $g \in G_n$, $t \in [\varphi_{n}^L (g), \varphi_{n}^U (g)]$, let

\[ \eta^+(g,t)= \inf \left \{ |t-\varphi_{n+1}^U (g')|:  \ g' \in G_{n+1}, \  \Psi_n(g') = g  \right \}
\]

\[ \eta^-(g,t)= \inf \left \{ |t-\varphi_{n+1}^L (g')|:  \ g' \in G_{n+1}, \  \Psi_n(g') = g  \right \}
\]

Let 

\begin{equation}\label{eq:eta_n}
\eta_n = \max \{\eta(g, I): g \in 
G_n, I \subseteq [\varphi_{n}^L (g), \varphi_{n}^U (g)]\},  
\end{equation}
\begin{equation}\label{eq:eta_n+}
\eta^+_n = \max \{ \eta^+(g, t ): g \in 
G_n,  \  t \in [\varphi_{n}^L (g), \varphi_{n}^U (g)]\}  
\end{equation}
\begin{equation}\label{eq:eta_n-}
\eta^-_n = \max \{\eta^-(g, t ): g \in 
G_n\ t \in [\varphi_{n}^L (g), \varphi_{n}^U (g)]\} .
\end{equation}
Note that $\eta_n \ge \eta^+_n, \eta^-_n$.
\end{definition}

The following theorem serves as a key tool in our constructions for identifying a particular class of fences.

\begin{theorem}\label{thm:fenceconst} Let $\fstructure$ be an $\cF$-structure and $\fence$ be its associated fence. Furthermore, assume that the following condition is satisfied.  

\begin{equation}\tag{$\dagger$}\label{eq:dagger}
\forall v \in V_n,\  \exists v' \in V_{n+1} \text{ such that } \Psi_n(v') = v , \  \varphi^L_n(v) = \varphi^L_n(v') \text{ and } \varphi^U_n(v) = \varphi^U_n(v').\
\end{equation}

    \begin{enumerate}
    \item If $\varphi^L$ is the zero function and $\varphi^U$ is constant one function, then $\fence$ is the Cantor fence.
    \item  If $\{\eta^+_n\}  \rightarrow 0$, then $\fence$ is the Scissorhand Fence.
    \item If $\varphi^L$ is the zero function and $\{\eta^+_n\} \rightarrow 0$, then $\fence$ is the Lelek fence.
    \item If $\{\eta^+_n\} \rightarrow 0$, and $\{\eta^-_n\} \rightarrow 0$ then $\fence$ is a two-sided Scissorhand Fence.
        \item If $\{\eta_n\} \rightarrow 0$, then $\fence$ is the Fra\"iss\'e fence.

    
    \end{enumerate}
    
\end{theorem}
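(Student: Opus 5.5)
The plan is to reduce every item to one consequence of the standing condition \eqref{eq:dagger}. We read \eqref{eq:dagger} with the indices on the child shifted, i.e.\ $\varphi^L_{n+1}(v')=\varphi^L_n(v)$ and $\varphi^U_{n+1}(v')=\varphi^U_n(v)$. Iterating it from a vertex $v\in V_n$ produces a thread $x\in[v]$ along which all the approximants stay constant. Since $\varphi^U(x)=\inf_m \varphi^U_m(x(m))$ and $\varphi^L(x)=\sup_m\varphi^L_m(x(m))$ by the monotonicity in Definition~\ref{def:fstructure}, this gives
\[\fence(x)=[\varphi^L_n(v),\varphi^U_n(v)].\]
We call such $x$ a \emph{faithful point} of $v$. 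Every finite-stage interval is therefore realized exactly as a fiber inside $[v]$, and the remaining items become approximation arguments at finite levels. A basic open set of $\fence$ has the form $([v]\times(t-\varepsilon,t+\varepsilon))\cap\fence$ with $v=x(n)$. Note also that $(x,t)\in\fence$ implies $t\in[\varphi^L_m(x(m)),\varphi^U_m(x(m))]$ for every $m$.

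Item (1) is immediate, since then $\fence=\cantor\times[0,1]$. For item (2), density of the graph of $\varphi^U$ goes as follows. Fix $(x,t)$, $n$ and $\varepsilon$, and choose $m\ge n$ with $\eta^+_m<\varepsilon$. Definition~\ref{def:etaparameters} then gives a child $g'$ of $x(m)$ with $|t-\varphi^U_{m+1}(g')|<\varepsilon$. A faithful point $y$ of $g'$ satisfies $y\in[x(n)]$ and $\varphi^U(y)=\varphi^U_{m+1}(g')$, so $(y,\varphi^U(y))$ lies in the given neighbourhood.

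The second half of (2), density of $\{\varphi^L\ne\varphi^U\}$, is where I expect a genuine gap. If every $\varphi^L_n$ equals $\varphi^U_n$, then $\eta^+_n\to0$ and \eqref{eq:dagger} can hold while the fence is just a graph. So I would add, or extract from the intended setting, the hypothesis that every $v$ has some descendant $w$ with $\varphi^L(w)<\varphi^U(w)$ at its level. With that, faithful points of such $w$ give the required dense set. Item (4) is the mirror argument with $\eta^-_m$ and $\varphi^L$. For item (3), the same argument shows that $\fence$ satisfies the definition of a Lelek fence, and uniqueness follows from \cite{Cha89,BO90}.

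For item (5), take $x$ and a continuum $I=[a,b]\subseteq\fence(x)$. Then $I\subseteq[\varphi^L_m(x(m)),\varphi^U_m(x(m))]$ for all $m$. Choose $m_k\to\infty$ with $\eta_{m_k}<1/k$; this gives children $g'_k$ of $x(m_k)$ whose intervals lie within $1/k$ of $I$ in Hausdorff distance. Faithful points $x_k$ of $g'_k$ satisfy $x_k\to x$ and have fibers that converge to $I$.

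Two issues remain here, and I expect them to be the main obstacle. First, we need $x_k\ne x$. If $g'_k\ne x(m_k+1)$ this is automatic. Otherwise I would use the branching clause of the $\cC$-structure: $g'_k$ has two distinct descendants at some deeper level. The threads through them cannot both equal $x$, so we descend via the one not on $x$, paying one extra application of $\eta$ to stay close to $I$. Second, each $\fence(x_k)$ must be an arc, which fails when $I$ is degenerate. For this I would first replace $I$ by a slightly longer interval $I_k\subseteq\fence(x)$, which exists when $\fence(x)$ is nondegenerate. When $\fence(x)$ is a point, I would take $I_k$ to be a short nondegenerate interval near $a$ in the parent stage interval, which requires the nondegeneracy hypothesis discussed above. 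Then $\eta_m(g,I_k)$ small forces a child interval of positive length. A diagonal argument over $k$ finishes the proof.
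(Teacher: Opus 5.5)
Your treatment of (1)--(4) is the paper's argument (points of $[v]$ on which the approximants are frozen by $(\dagger)$, combined with $\eta^\pm_m$), written out more carefully. Your caveat is also justified. The paper's proof passes from $(\dagger)$ straight to density of $\{\varphi^L\neq\varphi^U\}$, which is false for $\varphi^L_n\equiv\varphi^U_n\equiv 0$; there (2)--(5) all fail. Since $\varphi^L_n(v)=\varphi^U_n(v)$ forces every descendant of $v$ to be degenerate, your added hypothesis amounts to $\varphi^L_n<\varphi^U_n$ on every $V_n$. Under it every faithful point has an arc as fiber, so your second issue in (5) disappears. Likewise $x_k\neq x$ is automatic whenever $\fence(x)$ is a point.

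The genuine gap is your fix for $x_k\neq x$ when $\fence(x)$ is nondegenerate; the paper's one-line proof of (5) does not address this point either. The branching clause only produces some descendant $w$ of $g'_k$ off the thread of $x$, with no control on its interval. That interval may be a tiny subinterval far from $I$, and since $\eta$ only approximates subintervals of a vertex's own interval, nothing below $w$ returns near $I$. Re-applying $\eta$ at the vertex $x(j)$ where the threads split does not help, because the child it returns may again be $x(j+1)$. For instance, if $I=\fence(x)$ and $x$ is itself a faithful point of $g'_k$, then $x(j+1)$ carries exactly the interval $I$.

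The missing idea is to perturb the target interval. For $c\in V_{m+1}$ write $K(c)=[\varphi^L_{m+1}(c),\varphi^U_{m+1}(c)]$. Let $I=[a,b]$ with $a<b$ (after your lengthening step), and suppose the child $c$ of $x(m)$ with $d_H(K(c),I)\le\eta_m$ equals $x(m+1)$. Choose $2\eta_m<\delta<b-a$ and apply $\eta_m$ at $x(m)$ to $I'=[a+\delta,b]\subseteq I$. The resulting child $c'$ satisfies
$d_H(K(c'),K(c))\ge d_H(I',I)-2\eta_m=\delta-2\eta_m>0$,
so $c'\neq x(m+1)$, while $d_H(K(c'),I)\le\delta+\eta_m$. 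A faithful point of $c'$ then lies in $[x(m)]$, differs from $x$, and has an arc as fiber within $\delta+\eta_m$ of $I$. Letting $m\to\infty$ and $\delta\to 0$ completes (5).
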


\begin{proof}
Part 1. of the theorem is the definition of the Cantor fence.

Note that $\fence = \cap _{n=1}^{\infty} {\bf F}_{(\varphi^L_n, \varphi^U_n)}$.
Moreover, the condition \eqref{eq:dagger} implies for $ g\in G_n$, there exists $x_1, x_2 \in [g]$ such that $\varphi^L(x_1) = \varphi_n^L(x_1)$ and $\varphi^U(x_2) = \varphi_n^U(x_2)$. In particular, we have that $\{x \in \cantor: \varphi^L(x) \neq \varphi^U (x) \} $ is dense in $\cantor$.

Now to see Part 2., we only need to verify that the graph of $\varphi^U$ is dense in $\fence$. Indeed, this follows from the definition of $\eta^+_n$ and the fact that for all $g \in G_n$, there exists $x \in [g]$ such that $\varphi^U(x) = \varphi_n^U(x)$.

Parts 3.  and 4. are analogous, we simply use the definitions of $\eta^+_n$ and $\eta^-_n$.

Part 5. follows from the definition of $\eta_n$.
\end{proof}

\begin{example}\label{ex:bifusednotfraisse}
    There exists a two-sided Scissorhand Fence which is not the Fra\"iss\'e fence.
   \end{example} 
\begin{proof}
    This simply follows from the fact that one can do the above construction where Condition 4 of Theorem~\ref{thm:fenceconst} holds but Condition 5 does not. Indeed, if one constructs a sequence of clopen partitions $\{G_n\}$ of $\cantor$,  $G_{n+1}$ refining $G_n$ such that  
the following holds 
\begin{enumerate}[label=(\alph*)]
 \item if $g' \in G_{n+1}$, $g \in G_n$ such that $g' \subseteq g$, then either $\varphi^U_{n+1} (g') = \varphi^U_{n} (g) $ or $\varphi^L_{n+1} (g') = \varphi^L_{n} (g) $,
  
    \item if $g' \in G_{n+1}$, then $|\varphi_{n+1}^L(g') -\varphi_{n+1}^U(g')| = \ell \cdot |\varphi_{n}^L(g) -\varphi_{n}^U(g)|$ where $\ell \in \{1/2, 1 \}$,
\item for all $g \in G_n$, there is $g',g_L, g _U \in G_{n+1}$ with $g',g_L, g_U  \subseteq g$ such that 
    \begin{itemize}
        \item $|\varphi_{n+1}^L(g') -\varphi_{n+1}^U(g')| = |\varphi_{n}^L(g) -\varphi_{n}^U(g)|$,
        \item $\varphi^U_{n+1} (g_U) = \varphi^U_{n} (g) $ and  $|\varphi_{n+1}^L(g_U) -\varphi_{n+1}^U(g_U)| = 1/2 \cdot |\varphi_{n}^L(g) -\varphi_{n}^U(g)|$,
        \item $\varphi^L_{n+1} (g_L) = \varphi^L_{n} (g) $ and  $|\varphi_{n+1}^L(g_L) -\varphi_{n+1}^U(g_L)| = 1/2 \cdot |\varphi_{n}^L(g) -\varphi_{n}^U(g)|$,
    \end{itemize}
\end{enumerate}
 then, by Theorem~\ref{thm:fenceconst}, the resulting fence is a two-sided Scissorhand Fence because $\eta^+_{n}=2^{-n}=\eta^-_{n}$. That the resulting fence is not a Fra\"iss\'e Fence  simply follows from the definition of Fra\"iss\'e fence (Definition~\ref{def:fraisse}).
 \end{proof}

 The fences constructed in Example~\ref{ex:bifusednotfraisse} are homeomorphic to the underlying spaces of Auslander systems constructed explicitly by \cite{HaddadJohnson97}.

We next show how to modify the above construction to obtain the Lelek Fence.

\begin{example}\label{ex:LeLekFrA} (Lelek Fence) 
We construct a sequence of clopen partitions $\{G_n\}$ of $\cantor$,  $G_{n+1}$ refining $G_n$ such that  
the following holds. As usual, we initialize by $\varphi^U_0(g)=1$, and $\varphi^L_0 (g)=0$ for $g \in G_0$. For $n \in N$, we require that  
   \begin{enumerate}

  \item $\varphi^L_{n+1} (g')= \varphi^L_{n}(g)$ where $g' \subseteq g$, 
    \item for every $g\in G_n$ and for every $\ell \in \{2^{-n}, 2\cdot 2^{-n},\ldots, 2^n\cdot 2^{-n}=1\}$ there is $g' \in G_{n+1}$ such that 
    
    \[\varphi_{n+1}^U(g') = \varphi_n^L(g) + \ell \cdot (\varphi_{n}^U(g) - \varphi^L_n(g)).\]
    \end{enumerate}
   As $\eta^+_{n}\le 2^{-n}$,  by Theorem~\ref{thm:fenceconst} (3.) the resulting fence is a Lelek Fence. Note that $\varphi^L_n(g)=0$ for all $g$. However, we have written as above to facilitate the construction of Fra\"iss\'e Fence below.
\end{example}
Next we slightly modify the construction of Lelek Fence at even and odd steps so the resulting fence is a Fra\"iss\'e Fence.
\begin{example}\label{ex:FraisseFence}(Fra\"iss\'e Fence)
 As usual, we initialize by $\varphi^U_0(g)=1$, and $\varphi^L_0 (g)=0$ for $g \in G_0$.  Suppose $G_n, \varphi^U_n$, $\varphi^L_n$ has been constructed. We construct $G_{n+1}$ and $\varphi^U_n$, $\varphi^L_n$ in two steps. We first mimic Lelek fence construction from Example~\ref{ex:LeLekFrA}. Then this intermediate step will be modified by a dual construction of the Lelek fence.\\ 
{\bf Step 1.} Choose $\widehat{G}_{n+1}, \widehat{\varphi}^U_{n+1}$ and $\widehat{\varphi}^L_{n+1}$ so that 
 
 \begin{enumerate}
  \item $\widehat{\varphi}^L_{n+1} (g')= \varphi^L_{n}(g)$ where $g' \subseteq g$, 
    \item for every $g\in G_n$ and for every $\ell \in \{2^{-n}, 2\cdot 2^{-n},\ldots, 2^n\cdot 2^{-n}=1\}$ there is $g' \in \widehat{G}_{n+1}$ such that 
    
    \[\widehat{\varphi}_{n+1}^U(g') = \varphi_n^L(g) + \ell \cdot (\varphi_{n}^U(g) - \varphi^L_n(g)).\]
    \end{enumerate}
    {\bf Step 2.} Next we modify the intermediate stage  by exchanging the role of $U$ and $L$ in the  construction  of  Example~\ref{ex:LeLekFrA}.  Namely, we choose $G_{n+1}$, $\varphi^U_{n+1}$ and $\varphi^L_{n+1}$ so that
   \begin{enumerate}
  \item  $\varphi^U_{n+1} (g')= \widehat{\varphi}^U_{n+1}(g)$  where $g' \subseteq g$, 
    \item for every $g\in \widehat{G}_{n+1}$ and for every $\ell \in \{2^{-n}, 2\cdot 2^{-n},\ldots, 2^n\cdot 2^{-n}=1\}$ there is $g' \in G_{n+1}$ such that 
    \begin{align*}
    \varphi_{n+1}^L(g') = \widehat{\varphi}_{n+1}^U(g) - \ell \cdot (\widehat{\varphi}_{n+1}^U(g) - \widehat{\varphi}^L_{n+1}(g)).
        \end{align*}
    \end{enumerate}
    The first step guarantees us that $\eta^+_{n+1} \leq 1/2^n$. Combining it with the second step, we have that $\eta_n\leq 1/2^n$. As $\eta_n\leq 1/2^n$,
   applying Theorem~\ref{thm:fenceconst} (5.) we have that the resulting fence is a Fra\"iss\'e Fence. 
\end{example}

\section{Dynamics that directly lifts on fences}\label{sec:Dynamics that directly lifts}

This section is devoted to fundamental properties of lifting dynamics from the base Cantor space $\cantor$ to fences $\fence$. We prove that under homeomorphic fiber dynamics, the lifted system has the same topological entropy as the base system (Proposition~\ref{prop:entropy}). Furthermore, we prove that, for two-sided Scissorhand fences, liftings preserve transitivity, minimality, and, for homeomorphisms, chaotic behavior (Theorem~\ref{thm:GenFraisse}). 

\begin{definition}\label{def:lifting} 
    Let $\homeocantor: X \rightarrow X$ and $\fence$ be a fence over $X$. We say that $T: \fence \rightarrow \fence$ is a lifting of $\homeocantor$, if $T(x,t) = (\homeocantor(x),s)$ for some $s \in \fence (\homeocantor(x))$.
\end{definition}
The following theorem holds for arbitrary fence over Cantor space.  
\begin{proposition}\label{prop:entropy}
    Suppose $\fence$ is a fence and $T:\fence \rightarrow \fence$ lifts a continuous surjection $\homeocantor:X \rightarrow X$ in a way so that the restriction of $T$ to $\fence(x)$ is a homeomorphism for each $x \in X$. Then, $\ent(T) = \ent (\homeocantor)$.
\end{proposition}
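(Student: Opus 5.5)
The projection $\pi:\fence\to X$, $\pi(x,t)=x$, is continuous and surjective, and by Definition~\ref{def:lifting} we have $\pi\circ T=\homeocantor\circ\pi$. So $\homeocantor$ is a factor of $T$. Hence $\ent(\homeocantor)\le\ent(T)$, the standard monotonicity of entropy under factor maps. The whole task is the reverse inequality. For this I will use Bowen's fiber entropy inequality \cite{Bowen}:
\[
\ent(T)\le \ent(\homeocantor)+\sup_{x\in X} h(T,\pi^{-1}(x)),
\]
where $h(T,K)$ is Bowen's entropy of $T$ on the (not necessarily invariant) compact set $K$. It then suffices to show that $h(T,\fence(x))=0$ for every $x\in X$, where $\fence(x)$ is viewed as the fiber $\pi^{-1}(x)=\{x\}\times[\varphi^L(x),\varphi^U(x)]$.

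Fix $x$. The fiber is a point or an arc. For each $n$, the map $T^n$ carries $\pi^{-1}(x)$ into $\pi^{-1}(\homeocantor^n(x))$, since $T$ lifts $\homeocantor$. It does so homeomorphically onto its image, because a composition of the fiberwise homeomorphisms $T|_{\fence(\homeocantor^k(x))}$ is again a homeomorphism. So the orbit of the fiber is a sequence of homeomorphisms between intervals, $f_k:J_k\to J_{k+1}$, with $J_k=\fence(\homeocantor^k(x))$. This is a nonautonomous system of interval homeomorphisms. Such a homeomorphism is monotone, and so is every composition $f_{n-1}\circ\dots\circ f_0$.

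The key counting step is a bound on $(n,\eps)$-separated subsets of $J_0$. Let $\delta>0$ be a uniform continuity constant for $T$ at scale $\eps$; the compactness of $\fence$ makes this uniform in the fiber. I will argue, following the method of Kolyada and Snoha \cite{KolyadaSnoha} for nonautonomous systems of monotone interval maps, that any $(n,\eps)$-separated set has cardinality at most $n\cdot(1/\eps+1)$, or some other bound polynomial in $n$. The reason is as follows. Order the points of the set along $J_0$. Consecutive points must be $\eps$-apart at some time $k<n$. At each fixed time $k$, the images $f_{k-1}\circ\dots\circ f_0$ of the ordered points remain ordered, because the composition is monotone. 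Hence at time $k$ at most about $1/\eps+1$ consecutive gaps can have length $\ge\eps$, since all these images lie in an interval of length at most $1$. Summing over $k<n$ bounds the number of gaps by $n(1/\eps+1)$. Polynomial growth gives $h(T,\pi^{-1}(x))=0$. This is exactly the fact that a sequence of monotone interval maps has zero entropy, as in \cite{KolyadaSnoha}, but here it is applied along the fiber orbit.

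Combining the two inequalities gives $\ent(T)=\ent(\homeocantor)$. I expect the main obstacle to be the careful application of Bowen's inequality with a noninvariant fiber, using the Bowen $(n,\eps)$-separated definition on $\pi^{-1}(x)$, together with the uniform-in-$x$ counting. The counting above is uniform because the ambient metric on $[0,1]$ is fixed and all fibers have length at most $1$, so the supremum over $x$ is also zero. A smaller technical point is that the separation distance uses the product metric on $\fence$. Since all points of a fiber orbit share the same first coordinate at each time, that distance reduces to the vertical distance $|t-t'|$, so the interval counting applies directly.
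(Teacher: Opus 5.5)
Your proposal is correct and follows essentially the same route as the paper: Bowen's fiber-entropy inequality, combined with a bound of roughly $n/\varepsilon$ on $(n,\varepsilon)$-separated subsets of a fiber, which comes from the order-preserving (monotone) behaviour of the fiberwise homeomorphisms. Your treatment of the non-invariant fiber, via the compositions $f_{n-1}\circ\dots\circ f_0$, is somewhat more careful than the paper's phrasing; the uniform continuity constant $\delta$ you introduce is never actually used and can be dropped.
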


\begin{proof}
Recall that by \cite[Theorem 17]{Bowen}

\[\ent(\homeocantor)\leq \ent(T)\leq \ent(\homeocantor) + \sup_{x\in X} \ent(T|_{ \fence(x)}). \]
It can be shown, applying \cite[Theorem D]{KolyadaSnoha} and some additional technical details, that $\ent(T|_{\fence(x)})=0$ for every $x\in X$.
  For the sake of completeness, we give a direct proof of this fact. Indeed, for $\varepsilon>0$ any subset of $\fence(x)$ that is $\varepsilon$-separated has cardinality at most $\frac{1}{\varepsilon}$. Hence, for $n \in \N$, denote by $A(n, \varepsilon,x)$ the set of $(n,\varepsilon)$-separated subset of $\fence(x)$. The set $A(n, \varepsilon,x)$ has cardinality at most $\frac{1}{\varepsilon}\cdot n$. This can be seen by induction and the fact that $\{t_1 < t_2 < \ldots < t_j\} \subseteq \fence(x)$ are $(n,\varepsilon)$-separated if and only if $\{t_i,t_{i+1}\}$ are $(n,\varepsilon)$-separated for every $1 \le i < j$. This is indeed the case as $T|_{\fence(x)}$ is a homeomorphism.  Now, from Bowen's definition of entropy, it follows that  

\[ \ent(T|_{\fence(x)})= \lim_{\varepsilon\to 0}\limsup_{n\to\infty} \frac{A(n,\varepsilon,x)}{n}\leq \lim_{\varepsilon\to 0}\limsup_{n\to\infty} \frac{\log\left(\frac 1{\varepsilon}n\right)}{n}=0.\]
Hence, $\ent(T) = \ent (\homeocantor)$.

\end{proof}
\begin{theorem}\label{thm:GenFraisse}
Suppose $\fence$ is a two-sided Scissorhand fence and $T:\fence \rightarrow \fence$ lifts a continuous surjection $\homeocantor:X \rightarrow X$.

 \begin{enumerate}
 \item If $\homeocantor$ is transitive then $T$ is transitive. 
 \item If $\homeocantor$ is minimal, then $T$ is minimal.
 \item If $T$ is a homeomorphism and  $\homeocantor$ is chaotic, then $T$ is chaotic.
 \end{enumerate}
\end{theorem}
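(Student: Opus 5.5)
The whole argument rests on Proposition~\ref{prop:pointcompdense}(2): in a two-sided Scissorhand fence the set ${\bf D}_2$ of degenerate components is a dense $G_\delta$ subset of $\fence$. The idea is that on ${\bf D}_2$ the fence looks locally like the base. I first record a key observation. If $(x,y)\in{\bf D}_2$ and $W\subseteq \fence$ is open with $(x,y)\in W$, then there is a clopen $O\ni x$ with $\fence\cap(O\times[0,1])\subseteq W$. Indeed, if this failed, we would get points $(x_n,t_n)\notin W$ with $x_n\to x$; passing to a subsequence, $t_n\to t$, and by compactness $(x,t)\in\fence$, which forces $t=y$. But then $(x_n,t_n)\to(x,y)\in W$, a contradiction. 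So every nonempty open set of $\fence$ contains a full ``tube'' $\fence\cap(O\times[0,1])$ over a nonempty clopen $O$. Conversely, since $\varphi^L\le\varphi^U$, every fiber is nonempty, so each tube over a nonempty $O$ is a nonempty open set. Also, the projection $\pi:\fence\to X$ is a factor map, $\pi\circ T=\homeocantor\circ\pi$.

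\textbf{Transitivity.} Take nonempty open $W_1,W_2\subseteq\fence$ and shrink them to tubes over clopen sets $O_1,O_2$. By transitivity of $\homeocantor$ there is $n$ with $\homeocantor^n(O_1)\cap O_2\neq\emptyset$. Pick $x\in O_1$ with $\homeocantor^n(x)\in O_2$ and any $t\in\fence(x)$. Then $T^n(x,t)$ lies over $\homeocantor^n(x)$, hence in the tube over $O_2$. So $T^n(W_1)\cap W_2\neq\emptyset$, and $T$ is transitive.

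\textbf{Minimality.} Take any $p\in\fence$ with $\pi(p)=x$, and any nonempty open $W$, shrunk to a tube over $O$. The orbit of $x$ under $\homeocantor$ is dense, so it meets $O$ at some $\homeocantor^n(x)$. Then $T^n(p)$ lies over $\homeocantor^n(x)\in O$, hence in $W$. Thus every orbit is dense and $T$ is minimal.

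\textbf{Chaos.} Transitivity was shown above. Sensitivity is automatic for transitive maps with dense periodic points on an infinite space, but I would not need it anyway, since $\fence$ has no isolated points. It therefore remains to show that periodic points of $T$ are dense, and this is the main obstacle: a periodic point $x$ of $\homeocantor$ need not have a periodic point of $T$ sitting exactly where we want it. I would argue as follows. Given a tube over $O$, choose a $\homeocantor$-periodic point $x\in O$ with period $k$. Then $T^k$ maps the fiber $\fence(x)$ to itself. Because $T$ is a homeomorphism lifting $\homeocantor$, $T$ maps components to components, so $T^k$ restricted to $\fence(x)$, which is a point or an arc, is a continuous self-map. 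By the intermediate value theorem (Brouwer in dimension one) it has a fixed point $q$. This $q$ is $T$-periodic and lies in the tube, hence in $W$. So periodic points of $T$ are dense and $T$ is chaotic. The hypothesis that $T$ is a homeomorphism is used only to guarantee that $T^k$ maps the fiber onto itself cleanly; the definition of a lifting already gives $T^k(\fence(x))\subseteq\fence(\homeocantor^k(x))=\fence(x)$, so even mere continuity suffices for this step.
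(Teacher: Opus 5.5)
Your proof is correct. Parts (1) and (2) follow the paper's argument. The paper also deduces from Proposition~\ref{prop:pointcompdense}(2) that every nonempty open subset of $\fence$ contains a full tube $\fence|_{U}$ over a nonempty open $U\subseteq X$, and then pushes everything down to the base. You supply the compactness argument for the tube property, which the paper leaves as ``the nature of $\fence$''. The only cosmetic difference is that the paper shows every point over a transitive point of $\homeocantor$ is a transitive point of $T$, whereas you verify the open-set form of transitivity.

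Part (3) is where you genuinely diverge. The paper uses that $T$ is a homeomorphism to say that endpoints of $\fence(x)$ go to endpoints of $\fence(\homeocantor(x))$. Hence an endpoint of the fiber over a $\homeocantor$-periodic point of period $k$ is $T$-periodic of period $k$ or $2k$. You instead note that $T^k$ maps the interval $\fence(x)$ into itself and apply the intermediate value theorem.

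Your route is shorter and more general. It needs only continuity of $T$, so (3) holds for every continuous lifting, including non-invertible ones. It also sidesteps a step the paper uses without comment: a bijective lifting maps each fiber onto a fiber, which requires a connectedness argument showing $\homeocantor$ is injective. In return, the paper's argument says where the periodic points sit: they can be taken at upper or lower endpoints of fibers, with period exactly $k$ or $2k$.

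Your remark that $T$ maps components to components is not needed, as you observe yourself.
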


\begin{proof} Let us first note that since $\fence$ is a two-sided Scissorhand Fence, by Proposition~\ref{prop:pointcompdense}(2), the set of points, which are contained in degenerate components, is dense in $\fence$. Moreover, using this fact and the nature of $\fence$, we have that every nonempty open set in $\fence$ contains a set of the form ${\mathbf F}_{\Phi}|_{U}$ for some nonempty open set $U$ subset of  $X$.

For (1) we will verify that $(x,t) \in \fence$ is a  transitive point of $T$ whenever $x$ is a transitive point of $\homeocantor$. To this end, let $O$ be a nonempty open set in $\fence$. By our observation above, there is nonempty open $U$ in $X$ such that  ${\mathbf F}_{\Phi}|_{U} \subseteq O$.  As $x$ is a  transitive point of $\homeocantor$,  there is $n \in \N$ such that $\homeocantor^n(x) \in U$. Then, $T^n(x,t) \in \fence(\homeocantor^n(x)) \subseteq  \fence|_{U} \subseteq O$.

Part (2) follows the proof of Part (1) and the fact that every point of $X$ is a transitive point of $\homeocantor$. 

For (3), in light of (1), we only need to verify that the set of periodic points of $T$ is dense in $\fence$. Let $O$ be an open set in $\fence$. By our observation above, there is nonempty open $U$ in $X$ such that  $\fence|_{U} \subseteq O$. As $\homeocantor$ is chaotic, we may choose a point $x \in U$ which is a periodic point of $\homeocantor$.  As the  endpoints of $\fence(x)$ map under $T$ to  endpoints of $\fence(y)$ for some $y \in X$, we have that for $t$ an endpoint of $\fence(x)$, $(x,t)$ is a periodic point of $T$  of the same period as the period of $x$ under $\homeocantor$ or twice the period of period of $x$ under $\homeocantor$.
\end{proof}

\section{Maps on fences}\label{sec: Maps on fences}

In this section we prepare the groundwork for the proof of the main results by introducing a framework for lifting maps from the Cantor space to fences. We define $\cC$-systems, which encode dynamical systems on the Cantor space, and $\cF$-systems, which describe corresponding dynamical systems on fences. Within this framework, we will in subsequent section establish a general result that allows one to lift maps on the Cantor space to maps on fences.

\subsection{ $\cC$-systems}\label{subsec:csystmes}
Throughout, a \emph{digraph} is a directed graph $G= (V,E)$ with vertex set $V$ and set $E$ consisting of directed edges. Furthermore, we will assume that each vertex has at least one outgoing edge and at least one incoming edge. To expedite notation, we will usually use $G$  as the set of vertices and will use notation $\oa{uv} \in G$ to indicate that $\oa{uv}$ is an edge in set $E$.

It is well-known that every homeomorphism of the Cantor space can be represented by a sequence of digraphs \cite{AkinGlasnerWeiss,DarjiBernardes,shimomuraGraphCover}. Indeed, let $h:\cantor \rightarrow \cantor$ be a homeomorphism of the Cantor space. Let $\{P_n\}$ be a sequence of clopen partitions of $\cantor$ so that $P_{n+1}$ refines $P_n$ and the $mesh (P_n)$ goes to zero as $n \rightarrow \infty$. We define $G_n$ to be a digraph whose vertex set is $P_n$ and whose directed edges are   those $\oa{ab}$, $a, b \in P_n$, for which $h(a) \cap b \neq \emptyset$. Now consider the  inverse limit of digraphs $\{G_n\}$ with bonding maps $\psi_n: P_{n+1} \rightarrow P_{n}$ defined by containment. Then, $\psi_n$ is a surjective (vertex as well as directed edge) graph homomorphism from $G_{n+1}$ onto $G_n$ for which $ \oa {uv} \in G_{n+1}$ implies that $\oa {\psi_n(u),\psi_n(v)} \in G_n$.

Now if we let $\mathcal X = \varprojlim (G_n, \psi_n)$, then $\mathcal X$ is Cantor space topologically. Let $\homeocantor = \{(x, y) \in {X}^2: \oa{x(i)y(i)} \in G_i \}$, we have that $\homeocantor$ is a closed subset of $X^2$. Moreover, $\homeocantor$ is a graph of a function from $X$ to $X$ which is conjugate to $h$.

Motivated by the construction above, we introduce the notion of graph $\cC$-system and topological $\cC$-system.

\begin{definition}\label{def:graphcsystem}
A \emph{graph $\cC$-system} is a  $\csystem$ where $\csystem$ is a $\cC$-structure with the additional properties of $G_n$ being a directed graph and satisfying 
\begin{itemize}
    \item[1.] \label{eq:continuous}  For all $m \in \N$, there is $n > m$ such that for all $g \in G_n$ the following set
        \[ \{\Psi^m_n(g'): \oa{gg'} \in G_n\} \
        \] has cardinality one. 
\end{itemize}
\end{definition}
 A graph $\cC$-system induces a topological $\cC$-system defined as follows.
\begin{definition}
We  say that \emph{$(X,\homeocantor)$} is the \emph{topological $\cC$-system} induced by a graph $\cC$-system $\csystem$ 
if
\[X = \left \{ x \in \Pi_{i=0}^{\infty}G_i: x(n) =\Psi(x(n+1)) \ \forall n  \right \} \textit {and } \homeocantor = \left \{ (x, y) \in X^2:  \oa{x(n)y(n)}  \in G_n \ \forall n \right \}.\]
We use \[ (X,\homeocantor) = \varprojlim\csystem,
\] as a short to denote that $(X,\homeocantor)$  is induced by $\csystem$.
\end{definition}

Note that $X$ is simply the topological inverse limit of the $\csystem$ and as such it is a Cantor space. It inherits subspace topology from the product topology on $\Pi_nG_n$. This topology is generated by the standard metric on $X$ given by $d(x,y)=2^{-n}$ where $n$ is the least integer where $x(n) \neq y(n)$. 

 $\homeocantor$ is a closed subset of $X^2$ whose projection on both coordinates is $X$. Condition 1. of Definition~\ref{def:graphcsystem} implies that  set $\homeocantor$ is the graph of a surjection of $X$.  Moreover, if the following condition is satisfied, then we have that $\homeocantor$ is the graph of a homeomorphism of $X$.

\begin{enumerate}
        \item [{\it 2.}] For all $m \in \N$, there is $n > m$ such that for all $g \in G_n$ the following set
        \[ \{\Psi^m_n(g'): \oa{g'g} \in G_n\} \
        \] has cardinality one. 
\end{enumerate}

As discussed earlier, every continuous surjection of a Cantor space is topologically  conjugate to $(X,\homeocantor)$ generated by some $\cC$-system $\csystem$.

\subsection{${\cF}$-systems}\label{subsec:fsystmes}
Based on graph $\cC$-systems, we introduce $\cF$-systems which capture variety of fences. 

\begin{definition}\label{def:fsystem}
An $\fsystem$ is a \emph{$\cF$-system} if $\fsystem$ is an $\cF$-structure and $\csystem$ is a graph $\cC$-system satisfying the following additional condition.
\begin{enumerate}
    \item for all $ g \in G_n$, there exists  $ g' \in G_{n+1}$ such that $\Psi_n(g') = g$, and $\varphi^L_n(g) = \varphi^L_n(g') \text{ and } \varphi^U_n(g) = \varphi^U_n(g').$
\end{enumerate}
\end{definition}

Note that as $\csystem$ is a part of $\fsystem$, associated with each $\cF$-system  we have a topological $\cC$-system $(X,\homeocantor)$.
Associated with each $\cF$-system \\
$\fsystem$, we have a fence ${\bf F}_{\Phi{}}{}$ determined by $ \Phi = (\varphi ^L, \varphi ^U)$ where \[\varphi^U (x) = \varinjlim \varphi^U_n(x(n)) \text{ and } \varphi^L (x) = \varinjlim \varphi^L_n(x(n)).\]
 Letting $\Phi_n =(\varphi^L_n, \varphi^U_n)$
we obtain that $\cap_{n\in \N} \nfence=\fence$. We use the  maximum metric on $X\times [0,1]$.

Our next aim is to provide conditions on sequences $\{\varphi^L_n\}$ and $\{\varphi^U_n\}$ so as to naturally obtain a continuous surjection of the fence ${\bf F}_{\Phi{}}{}$ which is an extension of the map $(X,\homeocantor)$.

\section{General theorem}\label{sec:General theorem}

In this section, we establish a general realization theorem that provides a systematic method for lifting maps from the Cantor space $\cantor$ to Scissorhand fences $\fence$. 
Under Condition $\Gamma$, our general realization theorem yields a uniquely defined continuous surjection on the resulting fence with the original Cantor map as a factor, while preserving structural properties such as invertibility, Lipschitz regularity, and isometric behavior.

\subsection{Condition $\Gamma$}

Let $\fsystem$ be an $\cF$-system. 
For  $u, v \in G_n$, let 
\begin{equation}
s_n(u, v):= \frac{\varphi^U_n(v)}{\varphi^U_n(u)}
\end{equation}

Let $\oa{uv}\in G_n$. We define 
\begin{equation}\label{def:Fgamma_n}
\begin{split}
\Gamma_n(\oa{uv}):=  \max\Biggl\{ \left | s_n(v,u)  - s_{n+1}(v',u') \right | ,\\ \left | s_n(u,v)  - s_{n+1}(u',v')\right | : 
\oa{u'v'}\in G_{n+1}, \Psi_n(u')=u,  \Psi_n(v')=v\Biggl\}. 
\end{split}
\end{equation}
and let 
\begin{equation}\label{eq:gamma}
\Gamma_n:=\max\{\Gamma_n(\oa{uv}):  \oa{uv}\in G_n\}.
\end{equation}

We will say that $\cF$-system $\fsystem$ satisfies \emph{Condition $\Gamma$} if $\sum^{\infty}_{n=0} \Gamma_n < 1$.

\begin{lemma}\label{lem:bounded}
    If $\fsystem$ satisfies Condition $\Gamma$, then
    \begin{equation}\label{eq:AboundedF} 
    \begin{split}
  0< \inf A \leq \sup A < \infty   \textit{ where } 
 A=\Bigg\{  s_n(u,v), s_n(v,u): \oa{uv}\in G_n,\  n \in \N\Bigg\}.
    \end{split}
\end{equation}
\end{lemma}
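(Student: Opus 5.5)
The idea is to telescope along the inverse system: Condition $\Gamma$ controls how much the ratios $s_n$ can change from one level to the next. Hence every ratio at level $n$ differs from a ratio at level $0$ by at most $\sum_{k<n}\Gamma_k<1$. Since $G_0$ is finite, only finitely many level-$0$ ratios occur, and boundedness will follow.

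Throughout I implicitly assume $\varphi^U_n>0$, which is needed for $s_n$ to be defined at all. Now fix $n\in\N$ and an edge $\oa{uv}\in G_n$, and for $k\le n$ put $u_k=\Psi^{n-1}_k(u)$ and $v_k=\Psi^{n-1}_k(v)$. Because each $\Psi_k$ is a graph homomorphism (edges map to edges, as noted in Section~\ref{subsec:csystmes}), we have $\oa{u_kv_k}\in G_k$ for every $k\le n$. The pair $(u_{k+1},v_{k+1})$ is then an admissible competitor in the maximum defining $\Gamma_k(\oa{u_kv_k})$, so
\[
|s_k(u_k,v_k)-s_{k+1}(u_{k+1},v_{k+1})|\le \Gamma_k(\oa{u_kv_k})\le\Gamma_k ,
\]
and the same bound holds with $u$ and $v$ interchanged. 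Summing these over $k=0,\dots,n-1$ gives
\[
|s_n(u,v)-s_0(u_0,v_0)|\le\sum_{k=0}^{n-1}\Gamma_k<\sigma:=\sum_{k\ge0}\Gamma_k<1 ,
\]
and likewise for $s_n(v,u)$.

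Let $M=\max\{s_0(a,b): a,b\in G_0\}$, which is finite because $G_0$ is finite. The upper bound is then immediate: $\sup A\le M+\sigma<\infty$.

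The lower bound is the step I expect to be the real obstacle, since additive control alone only gives $s_n(u,v)>s_0(u_0,v_0)-\sigma$, which could be negative. I would get around this by using reciprocity, $s_n(u,v)\,s_n(v,u)=1$, together with the upper bound just proved. This gives $s_n(u,v)=1/s_n(v,u)\ge 1/(M+\sigma)>0$, so $\inf A\ge 1/(M+\sigma)$. Reciprocity is exactly why Condition $\Gamma$ is imposed symmetrically, on both $s(u,v)$ and $s(v,u)$.

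As an aside, the argument uses only $\sum\Gamma_n<\infty$; the stronger requirement that the sum be less than $1$ is not needed here. If the initial structure has a single vertex in $G_0$, as in the examples with $\varphi^U_0\equiv1$, then $M=1$ and the bounds take the explicit form $1/(1+\sigma)\le s\le 1+\sigma$, which would also give $s\ge 1-\sigma>0$ directly.
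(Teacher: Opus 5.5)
Your proof is correct. The telescoping step is exactly what the paper does: the paper leaves the chain $u_k=\Psi^{n-1}_k(u)$, $v_k=\Psi^{n-1}_k(v)$ implicit and, like you, tacitly uses that the bonding maps send edges to edges.

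The real difference is in the base case and the lower bound.
\begin{itemize}
\item The paper takes $G_0$ to be a single vertex, so $s_0\equiv 1$. It then reads off both bounds additively, $1-\sigma\le s_n\le 1+\sigma$. The hypothesis $\sigma=\sum_n\Gamma_n<1$ is used only to make $1-\sigma$ positive.
\item You allow an arbitrary finite $G_0$. You get positivity from the reciprocity $s_n(u,v)\,s_n(v,u)=1$, which turns the upper bound into the lower bound $1/(M+\sigma)$.
\end{itemize}

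Your route buys three things.
\begin{itemize}
\item It removes the unstated assumption $|G_0|=1$. That assumption is not part of the definition of a $\cC$-structure, and without it the additive bound $\min s_0-\sigma$ need not be positive even when $\sigma<1$.
\item It shows that only $\sum_n\Gamma_n<\infty$ is needed. This matches the later affine version of Condition $\Gamma$, which asks only for summability.
\item It is slightly sharper in the single-vertex case, since $1/(1+\sigma)\ge 1-\sigma$.
\end{itemize}
The paper's version is shorter and states the symmetric explicit bounds directly.

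One harmless slip: $\sum_{k=0}^{n-1}\Gamma_k<\sigma$ should read $\le\sigma$. Equality occurs, for instance, when all $\Gamma_k=0$, as in the isometry constructions. Since you only use $\sup A\le M+\sigma$, nothing changes.
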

\begin{proof}
As $G_0$ has only one element, we have that $s_0(v,u)=1$ for all $u, v \in G_0$. Let $\oa{u'v'} \in G_{n+1}$ and let $\oa{uv} \in G_{n}$ be such that $\Psi_n(u')=u,  \Psi_n(v')=v$. Note that $\left | s_n(v,u)  - s_{n+1}(v',u') \right |$, $\left | s_n(u,v)  - s_{n+1}(u', v') \right | < \Gamma_n$. As   $\sum_{n=0}^{\infty}\Gamma_n< 1$, we have that $\sup A \le 1 + \sum_{n=0}^{\infty} \Gamma_n < \infty$ and $\inf A \ge 1 - \sum_{n=0}^{\infty} \Gamma_n >0 $. 
\end{proof}

Next based on functions $s_n$, we define certain useful functions on $X$,  the inverse limit space $\varprojlim\csystem$. For $n \in \N$ we define $\tilde{s}_n, s:X \rightarrow \mathbb{R}$ by 
\[\tilde{s}_n(x):= s_n(x(n),H_X(x)(n))\]
\begin{equation}\label{eq:s(x)}
s(x):= \lim_{n\to \infty} \tilde{s}_n (x).
\end{equation}
The following simple proposition follows from the definition of Condition $\Gamma$ and verifies that $s$ is well-defined and continuous as each $\tilde{s}_n$ is piecewise constant.
\begin{proposition}\label{prop:sncauchy}
 For all $n \in \N$ and $x \in X$, \[|\tilde{s}_n (x) - \tilde{s}_{n+1} (x)| \le \Gamma_n. \]
\end{proposition}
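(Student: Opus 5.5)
The plan is to unwind the definitions and reduce the claim directly to the definition of $\Gamma_n(\oa{uv})$ in \eqref{def:Fgamma_n}. Fix $n\in\N$ and $x\in X$, and write $y=\homeocantor(x)$.

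First, I set $u=x(n)$, $v=y(n)$, $u'=x(n+1)$ and $v'=y(n+1)$. By the definition of $\homeocantor$ as the set of pairs $(x,y)\in X^2$ with $\oa{x(k)y(k)}\in G_k$ for every $k$, both $\oa{uv}\in G_n$ and $\oa{u'v'}\in G_{n+1}$ hold. Since $x$ and $y$ are threads of the inverse limit, we also have $\Psi_n(u')=u$ and $\Psi_n(v')=v$.

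With these choices,
\[
\tilde{s}_n(x)=s_n(u,v)\quad\text{and}\quad \tilde{s}_{n+1}(x)=s_{n+1}(u',v').
\]
The pair $\oa{u'v'}$ is therefore one of the edges over which the maximum defining $\Gamma_n(\oa{uv})$ is taken. In particular, it appears in the second term $\left|s_n(u,v)-s_{n+1}(u',v')\right|$, which gives
\[
|\tilde{s}_n(x)-\tilde{s}_{n+1}(x)|\le \Gamma_n(\oa{uv})\le \Gamma_n .
\]
The second inequality is just \eqref{eq:gamma}.

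The only point needing care is that the ratios $s_n$ are well defined, i.e.\ $\varphi^U_n(u)\neq 0$ on the relevant vertices. The definition of $\Gamma_n$ already presupposes this, and Lemma~\ref{lem:bounded} yields finite positive values, so there is no real obstacle here.

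As a consequence, Condition $\Gamma$ gives $\sum_n\Gamma_n<1$, so $\{\tilde{s}_n\}$ is uniformly Cauchy. Each $\tilde{s}_n$ is locally constant, since it depends only on $x(n)$ and $\homeocantor(x)(n)$, and $\homeocantor$ is continuous. Hence $s$ is a well-defined continuous function as a uniform limit of continuous functions.
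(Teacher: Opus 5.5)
Your argument is correct and is exactly the definitional unwinding the paper intends: with $u=x(n)$, $v=\homeocantor(x)(n)$, $u'=x(n+1)$ and $v'=\homeocantor(x)(n+1)$, the edge $\oa{u'v'}\in G_{n+1}$ lies over $\oa{uv}\in G_n$. Hence $|\tilde{s}_n(x)-\tilde{s}_{n+1}(x)|=|s_n(u,v)-s_{n+1}(u',v')|\le\Gamma_n(\oa{uv})\le\Gamma_n$. The appeal to Lemma~\ref{lem:bounded} for well-definedness is unnecessary, and slightly circular, since that lemma already presupposes the ratios $s_n$ make sense; positivity of $\varphi^U_n$ on vertices is simply a standing assumption built into the definitions of $s_n$ and $\Gamma_n$.
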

\begin{lemma}\label{lem:contS}
Suppose that Condition $\Gamma$ is satisfied. Then $s:X \to \R$ is continuous, $s>0$ and if $\varphi^U(x) \neq 0$ then
\[
s(x) = 
 \frac{\varphi^U(H_X(x)) }{\varphi^U(x)}.
\]
\end{lemma}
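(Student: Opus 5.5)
Existence and continuity of $s$ come first. By Proposition~\ref{prop:sncauchy}, $|\tilde s_n(x)-\tilde s_{n+1}(x)|\le\Gamma_n$ for every $x$, and $\sum\Gamma_n<1$ under Condition~$\Gamma$. Hence $\{\tilde s_n\}$ is uniformly Cauchy on $X$: $\sup_x|\tilde s_n(x)-\tilde s_m(x)|\le\sum_{k\ge n}\Gamma_k\to 0$. Each $\tilde s_n$ depends only on $x(n)$ and on $H_X(x)(n)$. The map $x\mapsto H_X(x)(n)$ is locally constant because $H_X$ is continuous (Condition 1 of Definition~\ref{def:graphcsystem}). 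So each $\tilde s_n$ is locally constant, hence continuous. As a uniform limit of continuous functions, $s$ is well defined and continuous.

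Positivity follows from Lemma~\ref{lem:bounded}. For each $x$ and $n$, the pair $\oa{x(n)\,H_X(x)(n)}$ is an edge of $G_n$, because $(x,H_X(x))\in H_X$. Therefore $\tilde s_n(x)\in A$, where $A$ is the set from \eqref{eq:AboundedF}. Lemma~\ref{lem:bounded} gives $\tilde s_n(x)\ge\inf A$, so $s(x)\ge \inf A>0$. Note that this also forces $\varphi^U_n(x(n))>0$ whenever $s_n$ is defined along edges. This is implicit in the definition of $s_n$, and Lemma~\ref{lem:bounded} guarantees the ratios are finite and nonzero.

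For the formula, write $y=H_X(x)$. By definition,
\[\tilde s_n(x)=\frac{\varphi^U_n(y(n))}{\varphi^U_n(x(n))}.\]
By the $\cF$-structure, $\varphi^U_n(x(n))\downarrow\varphi^U(x)$ and $\varphi^U_n(y(n))\downarrow\varphi^U(y)$. If $\varphi^U(x)\neq0$, the denominators converge to a nonzero limit. The quotient then converges to $\varphi^U(y)/\varphi^U(x)$, and uniqueness of limits gives $s(x)=\varphi^U(H_X(x))/\varphi^U(x)$. The only subtle point is making sure every ratio is defined, that is, no $\varphi^U_n$ vanishes at a vertex. I would handle this by noting that finiteness of $A$ in Lemma~\ref{lem:bounded} presupposes nonzero denominators. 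Alternatively, $\varphi^U_n(x(n))\ge\varphi^U(x)>0$ in the case of interest. So I do not expect any genuine obstacle here; the main step is the uniform Cauchy estimate.
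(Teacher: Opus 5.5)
Your proof is correct and follows essentially the same route as the paper: uniform convergence of the locally constant $\tilde s_n$ via Proposition~\ref{prop:sncauchy} and $\sum\Gamma_n<1$, positivity from Lemma~\ref{lem:bounded}, and the formula from $\varphi^U_n(x(n))\to\varphi^U(x)$. The nonvanishing of $\varphi^U_n$ on vertices that you flag is an implicit standing assumption in the paper, needed already to define $s_n$, so it is not a gap in your argument.
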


\begin{proof}
This follows by the definition of $s$ and the fact that $\varphi_n^U(x)$ converges to $\varphi^U(x)$.
\end{proof}

Let \[E^{U} := \{(x, \varphi^U(x)):x \in \cantor\}\] be the set of upper end points.

\begin{theorem}\label{thm:mainF}
Let $\fsystem$ be an $\cF$-system which satisfies Condition $\Gamma$ and  ${\bf F}_{\Phi}$ is a Scissorhand Fence. 
Then, there exists a unique continuous surjection $T: {\bf F}_{\Phi}\rightarrow {\bf F}_{\Phi}$, with $\homeocantor$ as a factor, satisfying \[T\left(x, \varphi^U(x)\right)=\left(\homeocantor(x), \varphi^U(\homeocantor(x))\right)\] for all $x \in X$.
 Moreover, 
 \begin{enumerate}
         \item if $\homeocantor$ is a homeomorphism, then $T$ is a homeomorphism of ${\bf F}_{\Phi}$.
     \item if $s$ as defined in \eqref{eq:s(x)} and $\homeocantor$ are both Lipschitz, then so is $T$.
     \item if $s$ and $1/s$ are both Lipschitz and $\homeocantor$ is bi-Lipschitz, then $T$ is bi-Lipschitz.
     \item if $s=1$ and $\homeocantor$ is an isometry, then $T$ is an isometry.
 \end{enumerate}
\end{theorem}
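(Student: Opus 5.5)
The natural candidate is the fiberwise linear scaling
\[
T(x,t) := \bigl(\homeocantor(x),\, s(x)\, t\bigr),
\]
with $s$ the continuous positive function of Lemma~\ref{lem:contS}. I will first show that $T$ maps $\fence$ into $\fence$ and satisfies the endpoint condition. Then I will prove uniqueness via density of $E^U$. Finally I will read off items (1)--(4) from the explicit formula.

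Several points are immediate. When $\varphi^U(x)\neq 0$, Lemma~\ref{lem:contS} gives $s(x)\varphi^U(x)=\varphi^U(\homeocantor(x))$, so the upper endpoint is mapped correctly. When $\varphi^U(x)=0$, the fiber is $\{0\}$; it maps to $(\homeocantor(x),0)$, and this equals the upper endpoint only if $\varphi^U(\homeocantor(x))=0$. To check this, I will use the finite-level ratios. By Lemma~\ref{lem:bounded}, $\varphi^U_n(\homeocantor(x)(n))\le (\sup A)\,\varphi^U_n(x(n))$, and letting $n\to\infty$ gives $\varphi^U(\homeocantor(x))\le \sup A\cdot \varphi^U(x)=0$. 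The same inequality shows $T$ is well defined wherever $s$ is defined, since $s$ is the limit of $\tilde s_n$ (Proposition~\ref{prop:sncauchy}). Continuity of $T$ follows from continuity of $s$ and of $\homeocantor$.

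The main obstacle is the lower endpoints. We need $s(x)[\varphi^L(x),\varphi^U(x)]\subseteq[\varphi^L(\homeocantor x),\varphi^U(\homeocantor x)]$, with equality of the image fiber for surjectivity. Nothing in Condition $\Gamma$ mentions $\varphi^L$, so I would argue through the Scissorhand property instead. First define $T$ on $E^U$ by the required formula. The closure of $E^U$ is $\fence$, so any continuous extension is unique, and $T(\fence)=T(\overline{E^U})\subseteq\overline{E^U}=\fence$. Concretely, $T$ is the restriction to $\fence$ of the continuous map $(x,t)\mapsto(\homeocantor x, s(x)t)$ on $X\times[0,1]$, and this map sends $E^U$ into $E^U$. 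Hence it maps the closure $\fence$ into $\fence$, which also proves uniqueness. For surjectivity, $\homeocantor$ is onto, so $T(E^U)=E^U$. The image $T(\fence)$ is compact and contains $E^U$, hence contains $\overline{E^U}=\fence$. This density argument sidesteps $\varphi^L$ entirely, so the linear formula is forced.

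For (1), if $\homeocantor$ is a homeomorphism, the inverse is $(y,t)\mapsto(\homeocantor^{-1}y,\, t/s(\homeocantor^{-1}y))$, which is continuous since $s>0$. Alternatively, $T$ is injective on each fiber because $s>0$, injective across fibers because $\homeocantor$ is, and a continuous bijection of a compact space is a homeomorphism.

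For (2)--(4), I will use the maximum metric and write
\[
|s(x)t-s(y)t'|\le |s(x)-s(y)|\,t+s(y)|t-t'|\le \mathrm{Lip}(s)\,d(x,y)+(\sup s)\,|t-t'|.
\]
This gives the Lipschitz bound for (2). For (3), apply the same estimate to $T^{-1}$, noting that $1/s\circ\homeocantor^{-1}$ is Lipschitz. For (4), $s\equiv1$ makes $T=\homeocantor\times\mathrm{id}$, which is an isometry for the max metric.
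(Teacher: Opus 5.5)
Your proposal is correct and takes essentially the same route as the paper. You define $T$ on the dense set $E^U$ by $(x,\varphi^U(x))\mapsto(\homeocantor(x),s(x)\varphi^U(x))$ and extend it by continuity, using the Scissorhand density of $E^U$, which also gives uniqueness. Surjectivity comes from $E^U\subseteq T(E^U)$, and (1)--(4) follow from the same inverse formula and the same Lipschitz estimate as in the paper. Identifying the extension explicitly as the fiberwise scaling $(x,t)\mapsto(\homeocantor(x),s(x)t)$ is a slight sharpening: it makes injectivity immediate, and your use of Lemma~\ref{lem:bounded} explicitly handles the case $\varphi^U(x)=0$, which the paper's proof leaves implicit since Lemma~\ref{lem:contS} only covers $\varphi^U(x)\neq 0$.
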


\begin{proof}
As 
\[T\left(x, \varphi^U(x)\right)=\left(\homeocantor(x), s(x)\varphi^U(x) \right )\] and $s$ is uniformly continuous, we have that $T$ is uniformly continuous on $E^U$. Then, $T$ has a unique continuous extension on the closure of $E^U$, namely ${\bf F}_{\Phi}$. As $E^U$ is dense in ${\bf F}_{\Phi}$ and a subset of the range of $T$, we have that $T$ is a continuous surjection.  

 Now assume that $(X,\homeocantor)$ is a homeomorphism. Note that for any\\
 $\left(x, \varphi^U(x)\right), \left(x', \varphi^U(x')\right)\in {\bf F}_{\Phi}$, if $T\left(x, \varphi^U(x)\right)=T\left(x', \varphi^U(x')\right)$ we get $x=x'$ and consequently $\varphi^U(x)=\varphi^U(x')$. 
Hence $T$ is 1-to-1 on $E^U$ and by the definition of $T$ it is also 1-to-1 on ${\bf F}_{\Phi}$. Similarly, we define
\[\tilde{T}\left(x, \varphi^U(x)\right)=\left(\homeocantor^{-1}(x), \varphi^U(\homeocantor^{-1}(x))\right).\]
As 
\begin{equation}\label{eq:tildeT}
\tilde{T}(x, \varphi^U(x))=\left (\homeocantor^{-1}(x), \frac{1}{s(\homeocantor^{-1}(x))}\varphi^U(x) \right )
\end{equation}
and $1/s$ is uniformly continuous, we have that $\tilde{T}$ is uniformly continuous on $E^U$ and can be extended to a continuous function on ${\bf F}_{\Phi}$. Note that $\tilde{T}T=T\tilde{T}$ is the identity on $E^U$, a dense subset of ${\bf F}_{\Phi}$. Hence, $T$ is the inverse of $\tilde{T}$ and itself is a homeomorphism of $\fence$.

To see (2), let  $M =\alpha + \beta + \gamma $ where $\alpha= \sup_{x \in X} |s(x)|$, $\beta$ is a Lipschitz constant of  $s$ and $\gamma$ is a Lipschitz constant of $\homeocantor$. We will show that $T$ is $M$-Lipschitz. It  suffices to show the $M$-Lipschitz condition on the set $\{(x, \varphi^U(x)):x \in X\}$ as it is dense in ${\bf F}_{\Phi}$. As the metric on ${\bf F}_{\Phi}$ is inherited from the sup metric on $\cantor \times \R$, and $T\left(x, \varphi^U(x)\right)=\left(\homeocantor(x), \varphi^U(\homeocantor(x))\right)$, it suffices to show that both coordinate mappings are Lipschitz. Indeed, the first coordinate function is $\gamma$-Lipschitz. Now we will show that the second coordinate is $(\alpha + \beta)$-Lipschitz.
\begin{align*}
    \left | \varphi^U(\homeocantor(x)) -\varphi^U(\homeocantor(y)) \right | &=  \left | \varphi^U(x) s(x)  -\varphi^U(y) s(y) \right |\\
    & \leq \left | \varphi^U(x) s(x)  -\varphi^U(x) s(y) \right | + \left | \varphi^U(x) s(y)  -\varphi^U(y) s(y) \right | \\
    & \leq  \varphi^U(x) \left |  s(x)  - s(y) \right | + s(y)\left | \varphi^U(x)   -\varphi^U(y)  \right | \\
    & \leq 1 \cdot \beta d(x,y) + \alpha d(\varphi^U(x),\varphi^U(y) )\\
    &  \le ( \beta + \alpha ) d((x,\varphi^U(x)), (y,\varphi^U(y))).
\end{align*}

To see (3) note that $T^{-1}$ is defined on $E^U$ by \eqref{eq:tildeT}. Hence applying part (2) to $\homeocantor^{-1}$ and $\tilde{s}(x)=1/s(\homeocantor^{-1}(x))$, we get the desired result.

To see (4), recall  that $T(x, \varphi^U(x))=(\homeocantor(x),s(x)\varphi^U(x))$. As $s$ is the constant function 1 and $\homeocantor$ an isometry, we have that $T(x, \varphi^U(x))=(\homeocantor(x),\varphi^U(x))$, implying that $T$ is an isometry. 
\end{proof}

\section{Applications to dynamics on isometries}\label{sec: Applications to dynamics on isometries}

In this section we apply the realization theorem, Theorem~\ref{thm:mainF}, to Cantor isometries with nowhere dense orbits. We show that such dynamical systems admit isometric liftings to both the Fraïssé fence and the Lelek fence with some additional control.

We start the section with a simple example which is a special case of more general result on isometries on Fra\"iss\'e fence, Theorem~\ref{thm:liftingisometriesFraisse}.

\begin{example}\label{ex:warmupisometryfraisse}
    There exists a Fra\"iss\'e fence $\fence$, and an isometry $T:\fence\to \fence$ such that the set of periodic points of $T$ is countably infinite and dense in $\fence$ and it is a subset of degenerate components of $\fence$.
\end{example}

\begin{proof}
    We will inductively define an {$\cF$-system} $\fsystem$ satisfying Theorem~\ref{thm:mainF}, where $\{G_n\}$ is a sequence of digraphs whose components are cycles. Moreover, we will ascertain that the sequence $\{\eta_n\}$ as defined in \eqref{eq:eta_n} satisfies $\{\eta_n\} \rightarrow 0$ (in order to apply 5. from  Theorem~\ref{thm:fenceconst}) and $s$ as defined in \eqref{eq:s(x)} satisfies $s=1$ (in order to apply 4. from Theorem~\ref{thm:mainF}).

    At step $k=0$, we let  $G_0$ be a cycle of length $1$. We simply let $\varphi^U_0 =1$ and $\varphi^L_0 =0$ on vertices of $G_0$.   Suppose we are at stage $k$ and $G_k$, $\varphi^L_k$ and $\varphi^L_k$ have been defined so that $\varphi^U_k$ and $\varphi^L_k$ are constant functions on each component of $G_k$ which happens to be cycle. Moreover assume that we have a cycle of length one in $G_k$. We proceed to define $G_{k+1}$, $\Psi_k$, $\varphi^U_{k+1}$ $\varphi^L_{k+1}$. We work with one cycle of $G_k$ at a time. Choose a cycle $C$ of $G_k$. Let $n$ be the length of $C$. Associated with $C$, we define a collection $\mathcal H _C$ of cycles; $\mathcal H_C$ consists of one cycle $D$ of length $n$ and of $(k+2)(k+1)/2$ many cycles of length $2n$ labeled by $C_{i,j}$, $0  \le i < j \le k+1$.
    The map $\Psi_k$ is defined on ${\mathcal H}_C$ in a natural way so that it is a surjective graph homomorphism. 
    On $D$, we simply let $\varphi^U_{k+1}|D$ be the same function as $\varphi^U_k \circ \Psi_k|D$ and $\varphi^L_{k+1}|D=(\varphi^U_k\circ\Psi_k|D + \varphi^L_k\circ \Psi_k|D)/2$. 
    On cycle $C_{i,j}$ we define $\varphi^U_{k+1}$ be the function $l+d\cdot j$ where $l= \varphi_k^L\circ \Psi_k|C_{i, j}$, $d = \frac{1}{k+1} [\varphi_k^U\circ \Psi_k|C_{i,j}-\varphi_k^L\circ \Psi_k|C_{i,j}] $
and $\varphi^L_{k+1}$ be the function $l+d\cdot i$.
We let $G_{k+1}$ be the union of all such $\mathcal{H}_C$'s.

Construction implies that $\{\eta_n\}\to 0$ and thus by Theorem~\ref{thm:fenceconst}(5), we have that $\fence$ is a Fra\"iss\'e fence. By Theorem~\ref{thm:mainF}, we obtain a homeomorphism $T:\fence\to \fence$ with a canonical factor $\homeocantor$. Observe that function $s$ as in Equation~\eqref{eq:s(x)} is identically one and hence by Theorem~\ref{thm:mainF}(4), we have that $T$ is an isometry.

Suppose that $(x,t)\in \fence$ is a periodic point of period $n\in\mathbb N$. Then there exists $k_0$ such that  $x(k)$ is contained in an $n$-cycle of $G_k$ for all $k\geq k_0$. Since the length of \newline $[\varphi^L_{k+1}(x({k+1})), \varphi^U_{k+1}(x({k+1}))]$ is half of the length of $ [\varphi^L_{k}(x(k)), \varphi^U_{k}(x(k))]$, it follows that the component of $x$ is degenerate. Hence, periodic points are contained in degenerate components.

Note that if $C$ is a cycle of $G_k$ of length $n$, then by construction $C$ contains exactly one orbit of $T$ of size $n$. Hence, the set of periodic points is countable. Moreover, every cycle contains a periodic point of $T$. This implies that the set of periodic points is dense in $\fence$.
\end{proof}

The following two propositions will be used later in this section in the proofs of our main theorems about isometries on Lelek and Fra\"iss\'e fence.

\begin{proposition}\label{prop:ClopenIsPeriodic}
     
 Let $h: \cantor \rightarrow \cantor$ be an isometry.
For each clopen set $U \subseteq \cantor$, there exists $l \in \N$ such that $h^l(U)=U$. 
\end{proposition}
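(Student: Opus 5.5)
The approach is to use compactness together with the uniform (Lebesgue) structure of the clopen set $U$, and the fact that an isometry of a compact metric space is a surjective homeomorphism.

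First I will record that a distance-preserving self-map of a compact metric space is onto. The standard argument: if some $y\notin h(\cantor)$, take $\varepsilon = d(y,h(\cantor))>0$. The points $y,h(y),h^2(y),\dots$ are then pairwise at distance at least $\varepsilon$, since $d(h^i y,h^j y)=d(y,h^{j-i}y)\ge\varepsilon$. This contradicts compactness. So $h$ is an isometric homeomorphism, and the same holds for every power $h^l$.

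Next, since $U$ and $\cantor\setminus U$ are disjoint compact sets, put $\delta=d(U,\cantor\setminus U)>0$; if $U$ is empty or all of $\cantor$ there is nothing to prove. The key observation is that if $d(h^l(x),x)<\delta$ for all $x\in\cantor$, then $h^l(U)\subseteq U$. Indeed, $x\in U$ and $h^l(x)\notin U$ would force $d(x,h^l(x))\ge\delta$. Applying the same reasoning to $\cantor\setminus U$, which is also clopen with the same $\delta$, gives $h^l(\cantor\setminus U)\subseteq\cantor\setminus U$. Since $h^l$ is a bijection, these two inclusions give $h^l(U)=U$.

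So it remains to find $l\ge 1$ with $\sup_x d(h^l x,x)<\delta$, and this is the main step. I would use the sequence $\{h^n\}$ in $C(\cantor,\cantor)$ with the sup metric. It is equicontinuous, since every $h^n$ is $1$-Lipschitz, so by Arzel\`a--Ascoli it has a uniformly Cauchy subsequence. Hence there are $m<n$ with $\sup_x d(h^n x,h^m x)<\delta$. Since $h^m$ is an isometry onto $\cantor$, substituting $x=h^{-m}z$ gives $\sup_z d(h^{n-m}z,z)=\sup_x d(h^n x,h^m x)<\delta$. Setting $l=n-m\ge1$ completes the proof.

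An alternative to Arzel\`a--Ascoli is to pass to a finite $\delta/2$-net $\{y_1,\dots,y_k\}$ and apply pigeonhole in the compact space $\cantor^k$ to the orbit $(h^n y_1,\dots,h^n y_k)$, then use the isometry property to transfer closeness from the net to all points. Either route is routine; the only care needed is the surjectivity of $h$, which the first step supplies.
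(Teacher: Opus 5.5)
Your proof is correct, and its core matches the paper's. Both use the gap $\delta=d(U,\cantor\setminus U)$ and seek $l\ge 1$ such that $h^l$ moves points by less than $\delta$. Both use the isometry property to pass from finitely many points, or from a pair of iterates, to all points.

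The differences are in the packaging:

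\begin{itemize}
\item \emph{How the near-return is found.} The paper covers $U$ by finitely many balls $B_{r_i}(x_i)$ with $r_i<\varepsilon/4$. It then uses the standard fact that $(x_1,\dots,x_n)$ is recurrent for the product isometry $h\times\dots\times h$ to get $d(h^l x_i,x_i)<r_i$. You instead obtain the uniform estimate $\sup_x d(h^l x,x)<\delta$ on all of $\cantor$ from Arzel\`a--Ascoli. This proves the needed recurrence rather than quoting it. Your ``alternative'' (a finite net plus pigeonhole in $\cantor^k$) is essentially the paper's route.

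\item \emph{How the reverse inclusion is obtained.} The paper proves $h^l(U)\subseteq U$ and $h^{-l}(U)\subseteq U$ by two symmetric triangle-inequality estimates. This tacitly assumes $h$ is invertible, which the statement (``isometry'') does not say explicitly. You apply the same observation to the clopen complement and then use surjectivity of $h^l$. You prove that surjectivity in your first step, so your version is self-contained for arbitrary isometries.
\end{itemize}

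One small simplification: the substitution $x=h^{-m}z$ is unnecessary. Since $h^m$ is an isometry, $d(h^n x,h^m x)=d(h^{n-m}x,x)$ holds directly.
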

\begin{proof} If $U = \cantor$ there is nothing to show. Hence, assume that $U$
 is a proper subset of $\cantor$ and let $\varepsilon>0$ be less than the distance between $U$ and $\cantor \setminus U$.
 Let $B_{r_i}(x_i)$, $1 \le i \le n$ be balls in $U$ which cover $U$ such that $r_i < \varepsilon/4$. Since $C$ is compact and $h$ an isometry every point of $C$ is a recurrent point of $h$. Hence, we have that $(x_1, \ldots, x_n)$ is a recurrent point of $h \times \ldots \times h$ and we may choose positive integer $l$ such $d( h^l(x_i),x_i) < r_i$,  $1 \le i \le n$.
 We claim that  $h^l(U) =U$. Indeed, if $y \in U$, then $y \in B_{r_i}(x_i)$ for some $1 \le i \le n$. We have that  
 
 \[ d(h^l(y), x_i)  \le d(h^l(y), h^l(x_i)) + d( h^l(x_i), x_i) < r_i + r_i < \varepsilon, \]
 verifying that $h^l(U) \subseteq U$. On the other hand, using the fact that $h^l$ and $h^{2l}$ are isometries,  we have that 
 \[ d(h^{-l}(y), x_i)  \le d(h^{-l}(y), h^{-l}(x_i)) + d( h^{-l}(x_i), x_i) = d(y, x_i) + d( h^{l}(x_i), x_i) < r_i + r_i < \varepsilon, \]
 verifying that $h^l(U) \subseteq U$.
 \end{proof}

\begin{proposition}\label{prop:isohelp}
    Let $h: \cantor \rightarrow \cantor$ be an isometry and $\varepsilon >0$. Then, there is a partition ${\mathcal V}$ of $\cantor$, with mesh less than $\varepsilon$ consisting of clopen sets, such that the digraph $({\mathcal V},h)$ consists of cycles. 
\end{proposition}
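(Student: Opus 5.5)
The plan is to build the partition from a finite clopen partition that is already invariant, using Proposition~\ref{prop:ClopenIsPeriodic} together with a common-refinement argument. Since $\cantor$ is compact and zero-dimensional, we first fix a finite clopen partition $\mathcal U=\{U_1,\dots,U_k\}$ of $\cantor$ with mesh less than $\varepsilon$. By Proposition~\ref{prop:ClopenIsPeriodic}, for each $i$ there is $l_i\in\N$ with $h^{l_i}(U_i)=U_i$. Let $L=\operatorname{lcm}(l_1,\dots,l_k)$; then $h^L(U_i)=U_i$ for every $i$.

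Next define $\mathcal V$ to be the collection of nonempty sets of the form
\[
\bigcap_{j=0}^{L-1} h^{-j}(U_{i_j}), \qquad (i_0,\dots,i_{L-1})\in\{1,\dots,k\}^L .
\]
Each such set is clopen, because $h$ is a homeomorphism (an isometry of a compact space onto itself is surjective). Together these sets form a partition of $\cantor$ refining $\mathcal U$, so its mesh is less than $\varepsilon$. The key claim is that $h$ permutes $\mathcal V$. For $V=\bigcap_{j<L}h^{-j}(U_{i_j})$ we compute
\[
h(V)=\bigcap_{j<L}h^{-(j-1)}(U_{i_j})=h(U_{i_0})\cap\bigcap_{j=0}^{L-2}h^{-j}(U_{i_{j+1}}).
\]
Since $h^L(U_{i_0})=U_{i_0}$, we have $h(U_{i_0})=h^{-(L-1)}(U_{i_0})$. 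Hence $h(V)$ is the atom indexed by the cyclic shift $(i_1,\dots,i_{L-1},i_0)$. So $h$ maps each atom of $\mathcal V$ onto an atom of $\mathcal V$, and because $h$ is a bijection it induces a permutation of the finite set $\mathcal V$.

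Finally, in the digraph $(\mathcal V,h)$ there is an edge $\oa{ab}$ exactly when $h(a)\cap b\neq\emptyset$. Since $h(a)$ is itself an atom of the partition, this happens if and only if $b=h(a)$. Each vertex therefore has exactly one outgoing and one incoming edge, and the edges are those of a permutation of a finite set, so the digraph is a disjoint union of cycles.

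The main obstacle is arranging that the images of the atoms are again atoms, rather than merely meeting several atoms. Taking a naive partition does not achieve this; it is the simultaneous period $L$ from Proposition~\ref{prop:ClopenIsPeriodic} that makes the join over one full period shift-invariant. Everything else is routine.
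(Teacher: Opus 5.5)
Your proof is correct and follows the paper's argument: you take a small-mesh clopen partition, get a common period $L$ from Proposition~\ref{prop:ClopenIsPeriodic}, and form the join of the partition over one full period, which $h$ then permutes. The only difference is cosmetic (you use preimages $h^{-j}(U_{i_j})$ where the paper uses images $h^{j}(U_j)$), and your explicit cyclic-shift computation supplies the verification that $h(V)\in\mathcal V$, which the paper only asserts.
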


\begin{proof}
Let $\mathcal U$ be any finite clopen partition of $\cantor$ with mesh less than $\varepsilon$.
For every $U\in\mathcal U$ there is, by Proposition \ref{prop:ClopenIsPeriodic}, some $n\geq 1$ for which $h^n(U)=U$. Since $\mathcal U$ is finite we can assume that the same $n$ works for all $U\in\mathcal U$.
Consider the partition ${\mathcal V}$ of $\cantor$ given by the common refinement of collections $\mathcal U$, $h(\mathcal U),\dots, h^n(\mathcal U)=\mathcal U$, i.e., 
\[ {\mathcal V} = \{ U_0\cap h(U_1)\cap \dots \cap h^{n-1}(U_{n-1}): U_i \in {\mathcal U}
\}.\]
As ${\mathcal U}$ is a partition of  $\cantor$, we have that ${\mathcal V}$ is a partition of $\cantor$. Moreover, if $V \in {\mathcal V}$, then $h(V) \in {\mathcal V}$ as $h^n(U) = U$ for all $U \in {\mathcal U}$. By the definition of ${\mathcal V}$, $h$ is one-to-one on ${\mathcal V}$ and thus a bijection. Hence, $graph ({\mathcal V},h)$ consists of cycles. 

\end{proof}

\begin{theorem}\label{thm:liftingisometriesFraisse}
Let $h \in \groupcantor$ be an isometry such that $orb(x,h)$ is nowhere dense in $\cantor$ for all $x \in \cantor$. Then, there exists $\Phi = (\varphi^L, \varphi^U)$ such that $\fence$ is a Fra\"iss\'e  fence and $\hat{h} \in \groupfence$ is an isometry such that $h$ is a factor of $\hat{h}$. Moreover, if $\{K_n\}$ is a sequence of closed, invariant, nowhere dense subsets of $\cantor$, then we can choose $\Phi = (\varphi^L, \varphi^U)$ so that $\varphi^L = \varphi^U$ on $\bigcup K_n$.
\end{theorem}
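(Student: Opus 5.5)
We will build an $\cF$-system $\fsystem$ with $s\equiv 1$, $\eta_n\to 0$, and $\homeocantor$ conjugate to $h$. Then Theorem~\ref{thm:fenceconst}(5) makes $\fence$ a Fra\"iss\'e fence, and Theorem~\ref{thm:mainF}(1),(4) gives a homeomorphism $\hat h=T$ that is an isometry and has $h$ as a factor. To obtain $s\equiv1$ it suffices, as in Example~\ref{ex:warmupisometryfraisse}, that $\varphi^U_n$ is constant along every edge $\oa{uv}\in G_n$. Then all ratios $s_n(u,v)$ equal $1$, every $\Gamma_n=0$, and Condition~$\Gamma$ holds trivially. The base digraphs come from Proposition~\ref{prop:isohelp}. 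We choose a sequence of clopen partitions $\mathcal V_n$ of $\cantor$ with mesh tending to $0$, with $\mathcal V_{n+1}$ refining $\mathcal V_n$, such that each $\mathcal V_n$ is $h$-invariant and the digraph $(\mathcal V_n,h)$ is a union of cycles. For the refinement step, apply Proposition~\ref{prop:isohelp} to obtain a fine partition, intersect it with $\mathcal V_n$, and take the common refinement of the $h$-iterates as in that proof. The inverse limit of these cycle digraphs is conjugate to $h$. We will put $\varphi^L_n,\varphi^U_n$ constant on each cycle.

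Nowhere density of orbits is what allows a Fra\"iss\'e pattern inside each cycle. Fix a cycle $C=\{V,h(V),\dots,h^{p-1}(V)\}$ of $\mathcal V_n$ with interval $[a,b]$. A finite union of closed orbit closures is nowhere dense, since $\overline{orb}(x,h)$ is nowhere dense. Hence $V$ contains many disjoint clopen sets lying in distinct $h$-cycles at a finer level. Concretely, we refine $\mathcal V_n$ until the part of $\mathcal V_{n+1}$ inside $\bigcup C$ splits into at least $N_n=(k+2)(k+1)/2+1$ distinct cycles, where $k=n$. To see that such a refinement exists, pick points $x_1,\dots,x_{N_n}\in V$ with pairwise disjoint orbit closures. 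This is possible because each orbit closure is nowhere dense and $V$ is open, so we can choose $x_{j+1}\in V\setminus\bigcup_{i\le j}\overline{orb}(x_i)$. Take an $h$-invariant clopen partition fine enough to separate these orbit closures; this uses compactness together with the fact that orbit closures of an isometry are minimal sets. We then assign intervals as in Example~\ref{ex:warmupisometryfraisse}. One child cycle keeps exactly $[a,b]$, which gives condition~1 of Definition~\ref{def:fsystem}. The remaining child cycles receive $[a+di,a+dj]$ for all $0\le i<j\le k+1$ with $d=(b-a)/(k+1)$. Any extra child cycles get arbitrary subintervals of $[a,b]$. This makes $\eta_n\le 2d\to0$.

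For the ``moreover'' part, enumerate $\{K_n\}$. At stage $n$ we additionally require that the cycles of $\mathcal V_{n+1}$ meeting $K_1\cup\dots\cup K_n$ each receive an interval of length at most $2^{-n}$, and that their children inherit this behavior. Since $K=K_1\cup\dots\cup K_n$ is closed, invariant and nowhere dense, the cycles of $\mathcal V_{n+1}$ meeting $K$ can be chosen inside an $h$-invariant clopen neighbourhood $W$ of $K$. We can take $W$ small enough that $\bigcup C\setminus W$ still contains the $N_n$ separated cycles. Such invariant clopen neighbourhoods exist because $h$ is an equicontinuous isometry: take $W=\bigcup_{i<l}h^i(U)$ for a small clopen $U\supseteq K$, using Proposition~\ref{prop:ClopenIsPeriodic} so that $l$ is finite. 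Inside $W$ we shrink the intervals by a factor of $2$, as for the cycle $D$ in Example~\ref{ex:warmupisometryfraisse}. Monotonicity in Definition~\ref{def:fstructure} is preserved. Each point of $\bigcup K_n$ then lies in shrinking intervals, so $\varphi^L=\varphi^U$ there.

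The main obstacle is balancing three requirements at every stage: the interval kept by one child cycle (condition~1 of Definition~\ref{def:fsystem}), the Fra\"iss\'e density requirement $\eta_n\to0$, and the collapse on $K$. The collapse must not destroy the child cycle that preserves the parent interval. The point to check carefully is that every cycle of $\mathcal V_n$, including those meeting $K$, still has room away from $K$ for a copy of its parent interval and for all the $C_{i,j}$ patterns. This is exactly where the hypothesis that $K_n$ is nowhere dense and that orbits are nowhere dense is used.
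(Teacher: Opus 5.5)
Your proposal is correct and follows essentially the same route as the paper. Both arguments use cycle partitions from Proposition~\ref{prop:isohelp} with $\varphi^L_n,\varphi^U_n$ constant on cycles, so that $s\equiv 1$ and $\Gamma_n=0$. Both produce enough child cycles, via points with pairwise disjoint orbit closures avoiding $K$, to realize every grid interval $[a+di,a+dj]$ (giving $\eta_n\to0$), assign short intervals to cycles meeting $K$, and conclude with Theorems~\ref{thm:fenceconst}(5) and \ref{thm:mainF}(4).

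Your use of the cumulative set $K_1\cup\dots\cup K_n$ at stage $n$ is slightly more careful than the paper's condition c). That condition only controls $K_n$ at stage $n$, so it tacitly needs the $K_n$ to be nested to conclude $\varphi^L=\varphi^U$ on all of $\bigcup K_n$.
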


\begin{proof}
 We will do this by constructing an $\cF$-system $\fsystem$ which satisfies hypothesis of Theorem~\ref{thm:mainF}(5). Moreover, we will guarantee that the sequence $\{\eta_n\}$, as defined in Equation~\ref{def:etaparameters}, goes to zero, guaranteeing by Theorem~\ref{thm:fenceconst}(5) that $\fence$ is a Fra\"iss\'e fence.

 Let $G_0$ be an open cover consisting  of simply $\cantor$. Let $\varphi_0^L =0$ and $\varphi_0^U=1$. At stage $n$, we will have a clopen partition $G_n$ of $\cantor$ consisting of cycles and $\Phi_n = (\varphi^L_n, \varphi^U_n)$ such that 
 \begin{enumerate}[label=\alph*)]
    \item the mesh of $G_n$ is less than $2^{-n}$,
     \item each of $\varphi_n^U$ and  $\varphi_n^L$ is a constant function on each cycle of digraph $(G_n,h)$, and
     \item  $|\varphi_n^L(x) -\varphi_n^U(x)| < 2^{-n}$  for all $x \in K_n$.
 \end{enumerate}  By the fact that orbit of each $x$ is nowhere dense in $\cantor$ and $K_n$'s are nowhere dense invariant sets, applying  Proposition~\ref{prop:isohelp} to a sufficiently small $\varepsilon >0$, we may choose $G_{n+1}$, a refinement of $G_n$,
 such that the digraph $(G_{n+1},h)$ satisfies
 \begin{itemize}
     \item $G_{n+1}$ consists solely of cycles,
     \item mesh of $G_{n+1}$ is less than $2^{-(n+1)}$,
     \item each cycle of $G_n$ contains at least $2^{2n}$ many cycles from $G_{n+1}$ which are disjoint with $K_{n+1}$.
 \end{itemize} 
 Indeed, the above may be done by choosing a finite set $A \subset \cantor$ such that the orbit closures of any two points in $A$ are disjoint from each other and also from $K_{n+1}$ and each $g \in G_n$ intersects at least  $2^{2n}$ elements of $A$. We simply let $\varepsilon$ be small enough so that the orbit closure of points in $A$ and $K_{n+1}$ are $2\varepsilon$ separated. Now we take $G_{n+1}$ guaranteed by Proposition~\ref{prop:isohelp} with mesh less than $\varepsilon$.
 
 We next define $\varphi_{n+1}^U$ and $\varphi_{n+1}^L$ on $G_{n+1}$. We do this so that for each cycle $H$ in $G_n$ and each $0 \le i < j \le 2^n$, there is a cycle $H'$ of $G_{n+1}$ contained in $H$ such that $\varphi_{n+1}^L = i\cdot 2^{-n} \cdot a +b  $ and $\varphi_{n+1}^U  = j \cdot 2^{-n} \cdot a+b$ where $a$ is the constant value of $\varphi_n^U - \varphi_n^L$ on $H$ and $b = \varphi_n^L $ on $H$. Moreover,
 we guarantee that if a cycle in $G_{n+1}$ intersects $K_{n+1}$ then $\varphi_{n+1}^U - \varphi_{n+1}^L$ on this cycles is less than $2^{-(n+1)}$. All of these can be accommodated by the third condition above. The construction at step $n+1$ is complete. 

 Now we have that our $\cF$-system $\fsystem$ satisfies the following conditions. For all $n \in \N$,
 \begin{itemize}
     \item for all $\oa{uv} \in G_n$, $s_n(u,v) =1$,
     \item $\Gamma_n =0$,
     \item $s:X \rightarrow \R$ is the constant one function.
     \item  $\eta_n$ is less than $2^{-n}$.
 \end{itemize}
By Theorem~\ref{thm:fenceconst}(5), we have that $\fence$ is a Fra\"iss\'e fence. By Theorem~\ref{thm:mainF}, we obtain $\hat{h}$, a homeomorphism of $\fence$, whose canonical factor is $h$. By Theorem~\ref{thm:mainF}(4), we have that $\hat{h}$ is an isometry. Condition c) guarantees that for all $x \in \cup K_n$, we have that $\varphi^L(x) = \varphi^U (x)$. 
\end{proof}

By appropriate modification of the above theorem, we have the following theorem about the Lelek Fence.
\begin{theorem}\label{thm:liftingisometriesLelek}
Let $h \in \groupcantor$ be an isometry such that $orb(x,h)$ is nowhere dense in $\cantor$ for all $x \in \cantor$. Then, there exists $\Phi = (\varphi^L, \varphi^U)$ such that $\fence$ is a Lelek fence and $\hat{h} \in \groupfence$ is an isometry such that $h$ is a factor of $\hat{h}$. Moreover, if $\{K_n\}$ is a sequence of closed, invariant, nowhere dense subsets of $\cantor$, then we can choose $\Phi = (\varphi^L, \varphi^U)$ so that $\varphi^U (x) >0$ for all $ x \in \bigcup K_n$.
\end{theorem}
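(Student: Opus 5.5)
We mirror the proof of Theorem~\ref{thm:liftingisometriesFraisse}, replacing the Fra\"iss\'e-type assignment of fiber intervals by the Lelek-type assignment of Example~\ref{ex:LeLekFrA}. We build an $\cF$-system $\fsystem$ in which every $G_n$ is a clopen partition of $\cantor$ whose digraph $(G_n,h)$ consists solely of cycles, and in which $\varphi^U_n$ is constant on each cycle. We keep $\varphi^L_n\equiv 0$ throughout. Since $\varphi^U_n$ is constant on cycles, $s_n(u,v)=1$ for every edge $\oa{uv}\in G_n$. Hence $\Gamma_n=0$, Condition~$\Gamma$ holds, and $s\equiv 1$. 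To avoid dividing by zero we make every value $\varphi^U_n$ strictly positive, while letting these values tend to $0$ along suitable branches. Once $\eta^+_n\to 0$ and condition~\eqref{eq:dagger} are verified, Theorem~\ref{thm:fenceconst}(3) shows that $\fence$ is a Lelek fence. Theorem~\ref{thm:mainF} together with part (1) (as $h$ is a homeomorphism) and part (4) then yields an isometric homeomorphism $\hat h$ of $\fence$ having $h$ as a factor.

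\emph{Inductive step.} Start with $G_0=\{\cantor\}$, $\varphi^U_0=1$, $\varphi^L_0=0$. Given $G_n$, choose a finite set $A\subset\cantor$ with the following properties: the orbit closures of points of $A$ are pairwise disjoint and disjoint from $K_1\cup\dots\cup K_{n+1}$; each $g\in G_n$ meets at least $2^n+1$ points of $A$; and each cycle of $G_n$ contains additionally a point whose orbit closure meets none of the others. This is possible because orbits are nowhere dense and the $K_i$ are closed, invariant and nowhere dense. Apply Proposition~\ref{prop:isohelp} with $\varepsilon$ small, below $2^{-(n+1)}$ and below half the separation of these orbit closures. This gives a refinement $G_{n+1}$ consisting of cycles, of mesh $<2^{-(n+1)}$, in which for every cycle $H$ of $G_n$ the points of $A$ inside $H$ lie in distinct cycles of $G_{n+1}$.

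\emph{Assigning values.} Let $a>0$ be the value of $\varphi^U_n$ on a cycle $H$ of $G_n$. For each $\ell\in\{1,\dots,2^n\}$, put the value $\ell 2^{-n}a$ on one cycle of $G_{n+1}$ inside $H$; the choice $\ell=2^n$ supplies \eqref{eq:dagger}. Every remaining cycle of $G_{n+1}$ inside $H$, in particular every cycle meeting $K_1\cup\dots\cup K_{n+1}$, receives the full value $a$. Then values are nonincreasing, positive and constant on cycles. Moreover $\eta^+_n\le 2^{-n}$, because every level $t\in[0,a]$ lies within $2^{-n}a$ of some child value. Thus $\fence$ is a Lelek fence.

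\emph{Positivity on $\bigcup K_n$.} Fix $x\in K_m$. For every $n\ge m$, the cycle of $G_{n+1}$ containing $x(n+1)$ meets $K_m$. By the rule above it keeps its parent's value, so $\varphi^U(x)=\varphi^U_m(x(m))>0$. This is the main point to check. The delicate part is ensuring that the $2^n$ ``shrinking'' cycles can always be chosen away from $K_1\cup\dots\cup K_{n+1}$ inside every cycle $H$ (and indeed inside every block $g$, since the Lelek property must be verified for each vertex). This is exactly what the separation of $A$ from the $K_i$ provides. It also remains to confirm that the dense-graph requirement is unaffected by the extra full-value cycles, which holds because $\eta^+$ only requires \emph{some} nearby child value.
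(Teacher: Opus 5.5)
Your proposal is correct and follows essentially the same route as the paper. Both use cycle partitions from Proposition~\ref{prop:isohelp}, take $\varphi^L\equiv 0$, and keep $\varphi^U_n$ positive and constant on cycles, so that $\Gamma_n=0$ and $s\equiv 1$. Both place Lelek-type values $\ell 2^{-n}a$ on cycles avoiding the $K_i$ and finish with Theorems~\ref{thm:fenceconst}(3) and \ref{thm:mainF}(1),(4). Two details in your version are worth noting. Protecting the cumulative union $K_1\cup\dots\cup K_{n+1}$ at stage $n+1$ is what makes the positivity argument on $\bigcup K_n$ work; the paper's condition c) literally protects only $K_n$, which tacitly needs the $K_n$ nested. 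Also, the per-vertex bound on $\eta^+_n$ is automatic and does not come from the separation of $A$: each cycle of $G_{n+1}$ projects onto its entire parent cycle, so every vertex of that cycle receives a child with each assigned value.
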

\begin{proof}
We proceed as in the proof of Theorem~\ref{thm:liftingisometriesFraisse}. As we want to construct a Lelek Fence, we make $\varphi_n^L \equiv 0$, for all $n$. At stage $n$, we have a clopen partition $G_n$ of $\cantor$ and $\Phi_n = (\varphi^L_n, \varphi^U_n)$ satisfying 
\begin{enumerate}[label=\alph*)]
    \item the mesh of $G_n$ is less than $2^{-n}$,
     \item  $\varphi_n^L$ is a constant function on each cycle of digraph $(G_n,h)$, and 
     \item if $g \in G_n$, $h \in G_{n-1}$ with $g \subseteq h$, and $K_n \cap g \neq \emptyset$, then $\varphi^U_n(g) = \varphi^U_{n-1}(h)$.
     \end{enumerate}
     As earlier we construct $G_{n+1}$ so that 
     \begin{itemize}
     \item $G_{n+1}$ consists solely of cycles,
     \item mesh of $G_{n+1}$ is less than $2^{-(n+1)}$,
     \item each cycle of $G_n$ contains at least $2^{n}$ many cycles from $G_{n+1}$ which are disjoint with $K_{n+1}$.
 \end{itemize} 
 We define  $\varphi_{n+1}^U$ so that for each cycle $H$ in $G_n$ and each $0 < i \le 2^n$, there is a cycle $H'$ of $G_{n+1}$ contained in $H$ 
 such that $\varphi_{n+1}^U  = i \cdot 2^{-n} \cdot \varphi_{n+1}^U$ on $H'$. Moreover,
 we guarantee that if a cycle in $G_{n+1}$ intersects $K_{n+1}$ then $\varphi_{n+1}^U = \varphi_{n}^U$. All of these requirements can be accommodated by the third condition above. The construction at step $n+1$ is complete. 

 By Theorem~\ref{thm:fenceconst}(2), we have that $\fence$ is a Lelek fence. By Theorem~\ref{thm:mainF}, we obtain $\hat{h}$, a homeomorphism of $\fence$, whose canonical factor is $h$. By Theorem~\ref{thm:mainF}(5), we have that $\hat{h}$ is an isometry. Condition c) guarantees that for all $x \in \cup K_n$, we have that $\varphi^U (x)>0$.

\end{proof}
\begin{remark}\label{rmk:isoTwosidedScissor} One can modify the proof of Theorem~\ref{thm:liftingisometriesFraisse} slightly so the resulting fence is a two-sided Scissorhand Fence which is not a Fra\"iss\'e Fence. Indeed, when defining $\varphi_{n+1}^U$ and $\varphi_{n+1}^L$ on $G_{n+1}$ as in the proof of Theorem~\ref{thm:liftingisometriesFraisse} if one guarantees that 
the following holds for all $g \in G_{n}$
\begin{enumerate}

    \item there exists $g' \in G_{n+1}$, $g' \subseteq g$ such that $\varphi_{n+1}^L(g') = \varphi_{n}^ L(g)$ , $\varphi_{n+1}^U(g') = \varphi_{n}^U(g)$, 
    \item if $g' \in G$, $g' \subseteq g$ and the above condition does not hold, then $|\varphi_{n+1}^L(g') -\varphi_{n+1}^U(g')| < 1/2 \cdot |\varphi_{n+1}^L(g) -\varphi_{n+1}^U(g)|$,
    \item $\eta^+_{n+1}, \eta^-_{n+1}$ are less than $2^{-(n+1)}$,
\end{enumerate}
 then the resulting fence is a Two-sided Scissorhand Fence which is not a Fra\"iss\'e fence. 
    
\end{remark}

\begin{remark}\label{rem:isometries}
As a corollary of the last two theorems we obtain that there exists an isometry of the Lelek fence (Fra\"iss\'e fence) such that the set of periodic points is dense in the fence and all positive integers are realized as periods. Moreover, one can do this in a fashion so that the periodic points are contained in the $\fence(x)$ where $x$ are periodic points of the Cantor set homeomorphism in the case of Lelek fence and degenerate components in the case of Fra\"iss\'e fence.
\end{remark}

\section{Applications to dynamics on Lelek fence}\label{sec: Applications to dynamics on Lelek fence}

In this section we study the lifting of specific dynamical properties from Cantor systems to the Lelek fence. We show that transitive homeomorphisms of the Cantor space can be lifted to transitive homeomorphisms of the Lelek fence so that a prescribed point on an upper endpoint fiber is transitive, that chaotic Cantor homeomorphisms admit chaotic liftings and that, under an additional recurrence condition, a broad class of topologically mixing Cantor homeomorphisms admits mixing liftings. In particular, this applies to shift homeomorphisms. By collapsing the Cantor base, the corresponding results translate to the Lelek fan.

\begin{theorem}\label{thm:transitive}
    Let $h \in \groupcantor$ be transitive and  $x \in \cantor$ whose orbit is dense in $\cantor$. Then, there exists $\Phi = (\varphi^L, \varphi^U)$ such that $\fence$ is a Lelek fence and $\hat{h} \in \groupfence$ such that 
    \begin{enumerate}
        \item[a)] $h$ is the canonical factor of $\hat{h}$, and
        \item [b)] $(x, \varphi^U(x))$ is a transitive point of $\hat{h}$.
    \end{enumerate}
\end{theorem}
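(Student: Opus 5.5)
We build an $\cF$-system $\fsystem$ with $\varphi^L_n\equiv 0$, whose underlying graph $\cC$-system $\csystem$ represents $h$ (via clopen partitions $P_n$ with mesh going to zero, as in Subsection~\ref{subsec:csystmes}). We then apply Theorem~\ref{thm:mainF} to get $\hat h$. This already gives a homeomorphism $\hat h$ with $h$ as canonical factor, satisfying $\hat h(y,\varphi^U(y))=(h(y),\varphi^U(h(y)))$. To make $\fence$ a Lelek fence we will arrange condition~\eqref{eq:dagger} and $\eta^+_n\to 0$, and then invoke Theorem~\ref{thm:fenceconst}(3).

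The key observation for b) is that $\hat h^k(x,\varphi^U(x))=(h^k(x),\varphi^U(h^k(x)))$. So $(x,\varphi^U(x))$ is transitive exactly when $\{(h^k(x),\varphi^U(h^k(x))):k\ge 0\}$ is dense in $\fence$. Since the graph $E^U$ is dense in a Lelek fence, it suffices to show that every point $(y,\varphi^U(y))$ is approximated by orbit points in both coordinates.

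We will not prescribe $\varphi^U$ on the orbit after the fact. Instead, we choose the values $\varphi^U_n$ on the vertices visited by the orbit so that the targets are hit. At stage $n$, for each $g\in G_n$ and each dyadic level $\ell\in\{j2^{-n}\varphi^U_n(g)\}$, we want some $k$ with $h^k(x)\in g$ and $\varphi^U(h^k(x))$ within $2^{-n}$ of $\ell$. Density of the orbit gives infinitely many $k$ with $h^k(x)\in g$. We then refine $P_{n+1}$ so that distinct orbit points $h^{k}(x)$, for finitely many chosen $k$ (one per pair $(g,\ell)$, plus a child that keeps the value $\varphi^U_n(g)$), lie in distinct children of $g$. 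Those children receive the value $\ell$ (respectively $\varphi^U_n(g)$). This gives \eqref{eq:dagger} and $\eta^+_n\le 2^{-n}$ on both the orbit-visited children and the rest, as in Example~\ref{ex:LeLekFrA}. Once a child containing $h^k(x)$ is assigned the value $\ell$, we keep the value constant along the descendants containing $h^k(x)$ at all later stages. Then $\varphi^U(h^k(x))=\ell$ exactly. We must check that such commitments never conflict: each $h^k(x)$ is committed at most once, and later stages choose fresh $k$. Because $x$ is not periodic (its orbit is dense in a Cantor space), the points $h^k(x)$ are distinct, so they can be separated by clopen partitions.

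The main obstacle is Condition~$\Gamma$. This requires the ratios $s_n(u,v)=\varphi^U_n(v)/\varphi^U_n(u)$ along edges $\oa{uv}$ to stabilize with summable errors $\Gamma_n$. The freedom above, which assigns very different values to nearby orbit points, directly threatens this, since an edge from the child of $h^k(x)$ goes to the child of $h^{k+1}(x)$. The first approach we will try is to make the values multiplicatively coherent along orbit segments. We assign the levels not to isolated points but to long orbit blocks $h^{k}(x),\dots,h^{k+N}(x)$, multiplying the whole local picture by a common factor. Then each edge ratio inside a block equals the ratio already present at level $n$, and the defect appears only at the block boundaries. By choosing the partition finely enough, we force the block boundaries into vertices where the ratio change is at most $\Gamma_n\le 2^{-n-2}$. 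This may be done by letting the values ramp geometrically by factors $(1\pm 2^{-n-2})$ over many steps, which the density and recurrence of the orbit permit. Since the ramps have finite length and the target levels need only be approximated to within $2^{-n}$, both $\eta^+_n\to0$ and $\sum\Gamma_n<1$ should be achievable. Then Theorem~\ref{thm:mainF} applies, $s$ is continuous, and a) and b) follow.
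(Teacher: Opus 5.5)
Your final mechanism is the paper's. You scale whole orbit blocks by a common factor, so edges inside a block keep their stage-$n$ ratio. The factor changes by $r$ from one block to the next, with $(1-r)N,\ |1-r^{-1}|N<2^{-(n+1)}$, where $N$ is the largest edge ratio in $G_n$. (The defect at a ramp step is $|1-r^{\pm1}|\,s_n(u,v)$, so $r$ must be tied to $N$, not just to $2^{-n}$.) There are $t$ blocks with $r^{t-1}<2^{-(n+1)}$, and each block realizes every edge, hence every vertex, of $G_n$.

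What is missing is the part that makes $\Gamma_n$ small on \emph{every} edge of $G_{n+1}$, and the step you offer for it does not work. Separating the chosen orbit points into distinct children, however fine the partition, does not control adjacency. A child $a\ni h^j(x)$ is joined to every $b\in G_{n+1}$ meeting $h(a)$, and such a $b$ may lie in another block or in the rest of the partition with a very different factor. The paper's device is a tower: choose a clopen $U\ni x_0$ such that the sets $h^j(U)$, $-\ell_{n+1}\le j\le\ell_{n+1}$, are pairwise disjoint and refine $G_n$, and make them elements of $G_{n+1}$. Then $h(h^j(U))=h^{j+1}(U)$ is a single vertex. The tower meets the rest of $G_{n+1}$ only through edges into $h^{-\ell_{n+1}}(U)$ and out of $h^{\ell_{n+1}}(U)$.

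Even with the tower, you must say what factor every other vertex of $G_{n+1}$ receives. Leaving it ``as in Example~\ref{ex:LeLekFrA}'' would destroy Condition~$\Gamma$. This factor must agree with the factor at both ends of the ramp, and it must be compatible with your commitments. A committed point keeps factor $1$ at all later stages, so the vertices adjacent to the ends of its committed segment must keep factor $1$ as well.

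The paper settles this by nesting rather than by fresh segments. The stage-$(n+1)$ tower is centred at $x_0$ and extends the stage-$n$ tower on both sides ($m_{-1}<-\ell_n$, $m_1>\ell_n$). The central blocks $H_{-1},H_0$ keep factor $1$, which gives $\varphi^U(x_j)=\varphi^U_{n+1}(x_j)$ for $|j|\le\ell_{n+1}$ at all later stages. The factor drops by $r$ per block outward, and every vertex outside the tower gets the bottom factor $r^{t-1}$, matching both ends.

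Your fresh-segment scheme can also be completed, but only with three further choices:
\begin{enumerate}
\item keep all vertices outside the new tower at factor $1$;
\item make each ramp start at factor $1$, descend to $r^{t-1}$ and climb back to $1$, so both ends match;
\item choose each new tower to avoid the finitely many points committed so far.
\end{enumerate}
That bookkeeping is the substance of the proof, and the proposal does not supply it.
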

\begin{proof}
We let $\varphi^L=0$ and 
    we will define a sequence of partitions $\{G_n\}$  of $\cantor$ and a sequence of functions $\{\varphi^U_n\}$, so that $\fsystem$ will be an  $\cF$-system associated with $h$. In particular, $
    \Psi_n:G_{n+1} \rightarrow G_n$ is the  containment map and $ \oa{uv} \in G_n$ if and only if $h(u) \cap v \neq \emptyset$.  This system will satisfy Condition $\Gamma$ and $\fence$ will be a Lelek fence. Applying Theorem~\ref{thm:mainF}, we will obtain $\hat h$ that is a homeomorphism of $\fence$. Then, we will verify that $\hat{h}$ satisfies conclusion b).

 Let $x_n = h^n(x)$ for all $n \in \Z$. The general strategy is as follows: For each $n$, we will choose $\ell_n \in \N$ and an open set $U\subset \cantor$ containing $x_{0}$ such that $h^i(U) \cap h^{i'}(U) = \emptyset$ for $ -\ell_n  \le i< i' \le \ell_n$. We will extend the collection $\{h^i(U): -\ell_n \le i \le \ell_n\}$ to a clopen partition $G_n$ of $ \cantor$. Function $\varphi^U_n$ will be defined on $G_n$ in an appropriate fashion so that the resulting $\Phi$ has the property that $\fence$ is a Lelek fence. All of this will be done by induction, taking into account the previous stage of the construction.  

    Let $G_1= \{\cantor\}$ and $\varphi^U_1:G_1 \rightarrow (0,1]$ be defined as $\varphi^U_1(\cantor) =1$.

    Suppose that $G_n$, $\varphi^U_n: G_n \rightarrow (0,1]$ and auxiliary parameters $\ell_n \in \N$ have been defined. We proceed to define $G_{n+1}$, $\varphi^U_{n+1}: G_{n+1} \rightarrow (0,1]$ and $\ell_{n+1}$. 
    
  Let \[  N =  \max  \left \{ \frac{\varphi^U_n(g_1)}{\varphi^U_n(g_2)}: \oa{g_1g_2} \textit { or } \oa{g_2g_1} \in G_n \right \}.\]
    
    Choose $0 < r <1$ so that each of $(1-r)N, |1-r^{-1}|N$ is less than $2^{-(n+1)}$. Now choose $ t \in \N$ sufficiently large so that $r^{t-1}< 2^{-(n+1)}$. Observe that  $\{\varphi^U_n(g), r\varphi^U_n(g), r^2 \varphi^U_n(g), \ldots,\\ r^{t-1} \varphi^U_n(g)\}$ is $2^{-(n+1)}$ dense in $[0, \varphi^U_n(g)]$, for all $g \in G_n$.

Observe that for all $\oa{g_1g_2} \in G_n$, there are infinitely many $j$'s  such that $x_j \in g_1$ and $x_{j+1} \in g_2$. This indeed holds since $\{x_j\}_{j \ge 0}$ is dense in $\cantor$. Using this fact, we may choose $\ell_{n+1} > \ell_n$ and an increasing sequence of integers $\{m_i\}_{i=-t}^t$ with $m_0 =0$, $m_{-1} < -\ell_n$ $m_1> \ell_n$,  $m_{-t} = -\ell_{n+1}$, $m_t = \ell_{n+1}$,  so that for all    $\oa{g_1g_2} \in G_n$ and for all $-t \le i < t$  there exists $j$ with $m_i \le j < m_{i+1}$ such that $x_j \in g_1$ and $x_{j+1} \in g_2$. In other words, intuitively speaking, for all $-t \le i < t$, $\{x_{m_i}, \ldots x_{m_{i+1}-1}\}$ realizes all the edges of $G_n$.
     
 Now choose  a clopen set $U$, containing $x_{0}$ such that the collection $\{h^j(U):-\ell_{n+1} \le j \le \ell_{n+1}\}$ is pairwise disjoint and refines $G_n$. We may do this as $\{x_n\} =\{h^n(x)\}$ and $x$ is not periodic. Let $H_i =\{h^j(U): m_i \le j <m_{i+1} \}$ for $-t \leq i<t$. Finally, enlarge the collection $\cup_{i=-t}^{t-1}H_i$ to form a clopen partition $G_{n+1}$ which refines $G_n$ and such that $mesh (G_{n+1})< 2^{-(n+1)}$. 
    We now define $\varphi^U_{n+1}: G_{n+1} \rightarrow (0,1]$. Let $g \in G_{n+1}$ with $g \subseteq g'$, $g' \in G_n$. Then,
    \[ \varphi^U_{n+1}(g) = \begin{cases} 
          \varphi^U_n(g') r^{i }&  \textit { if }  \ \ g \in H_{i},\ \ \ 
           0 \le i < t  \ \ \ \\
           \varphi^U_n(g') r^{|i| -1 }&  \textit { if }  \ \ g \in H_{i},  \ \ \ 
           -t \le i < 0 \\
          \varphi^U_n(g') r^{t-1} & \textit{ otherwise }
       \end{cases}
    \]

    We claim that the following conditions hold at stage $n+1$. 
    \begin{enumerate}
        \item for all $ g\in G_{n+1}$, if $g \cap \{x_{-\ell_{n}}, \ldots, x_0,\ldots, x_{\ell_{n}}\} \neq \emptyset$, then $\varphi^U_{n+1}(g) = \varphi^U_{n}(g') $
        where $g' \in G_n$ is such that $g \subseteq g'$.
        \item if $\oa{g_1g_2}$ or  $\oa{g_2g_1}\in G_{n+1}$, then 
        \[  \left | \frac{\varphi^U_{n+1}(g_1)}{\varphi^U_{n+1}(g_2)}  -\frac{\varphi^U_n(g_1')}{\varphi^U_n(g_2')}\right | < 2^{-(n+1)}
        \]
        where $g_i \subseteq g_i'$, $g_i' \in G_n$ for $i=1,2$.

        \item For all $g \in G_n$ and $ 0\le s < t$, there is $g' \in G_{n+1}$ such that $\varphi^U_{n+1} (g')=r^s\varphi^U_n(g)$.
  
    \end{enumerate}

Condition 1. holds from the definition of $\varphi^U_{n+1}$ and the fact that $\ell_{n}< m_1$ and $m_{-1} < -\ell_n$.

Let us now verify Condition 2. Suppose $\oa{g_1g_2} \in G_{n+1}$. Let $g_i' \in G_n$, $i =1 ,2$, be such that $g_i \subseteq g_i'$. If neither of $g_1$ or $g_2$ belongs to $\cup_{i=-t}^{t-1} H_i$, then by definition of $\varphi^U_n$, we have that $\varphi^U_{n+1}(g_i) = r^{t-1} \varphi^U_n(g_i')$ and the Condition 2. is verified in this case.
If both of $g_1, g_2 $ belong to $\cup_{i=-t}^{t-1} H_i$, by the manner in which $U$ was chosen we have that $g_1 \in H_{a}$,  $g_2 \in H_{b}$ for some $a, b$ with $|a-b|\le 1$.
Now the worst case scenario is where $\varphi^U_{n+1}(g_1)=r^a \varphi^U_{n}(g'_1) $ and $\varphi^U_{n+1}(g_2)=r^b \varphi^U_{n}(g'_2) $ where $|a-b|=1$.  Then,
\[  \left | \frac{\varphi^U_{n+1}(g_1)}{\varphi^U_{n+1}(g_2)}  -\frac{\varphi^U_n(g_1')}{\varphi^U_n(g_2')}\right | \le \max  \left \{ (1-r), |1-r^{-1}| \right \} \cdot \frac{\varphi^U_n(g_1')}{\varphi^U_n(g_2')} < 2^{-(n+1)}.
        \]

Now suppose that $g_1 \notin \cup_{i=-t}^{t-1} H_i$ but $g_2 \in \cup_{i=-t}^{t-1} H_i$. In this case, we have $g_2 \in H_{-\ell_{n+1}}$ and we have that $\varphi^U_{n+1}(g_i) = r^{t-1} \varphi^U_n(g_i')$, $i =1,2$ and we are done. The case where $g_2 \notin \cup_{i=-t}^{t-1} H_i$ but $g_1 \in \cup_{i=-t}^{t-1} H_i$ is symmetric. In this case we have that $g_1 \in H_{\ell_{n+1}} $ and the argument precedes as above.

Condition 3 follows from the fact that  for all $-t \le i < t$ for all $ g \in G_n$, there exists $g' \in H_i$ such that $g' \subseteq g$.

That $\fsystem$ is an $\cF$-system follows from the definition of $\varphi^U_{n+1}$. Finally, define $\varphi^U = \lim \varphi^U_n$. Condition 2. above implies that the $\Gamma_n < 2^{-(n+1)}$ and hence the condition $\Gamma$ holds. Furthermore, since  $\{\varphi^U_n(g), r\varphi^U_n(g), r^2 \varphi^U_n(g), \ldots, r^{t-1} \varphi^U_n(g)\}$ is $2^{-(n+1)}$ dense in $[0, \varphi^U_n(g)]$, for all $g \in G_n$, Condition 3. implies that $\eta^+_n < 2^{-(n+1)}$. Hence by Theorem~\ref{thm:fenceconst} we have that $\fence$ is a Lelek Fence. Thus we have that $\fsystem$ is an $\cF$-system satisfying Condition $\Gamma$ with $\{(x, \varphi^U(x)\}$ is dense in $\fence$. Now applying Theorem~\ref{thm:mainF}, we obtain that the resulting $\hat{h}$ is a homeomorphism of $\fence$ whose canonical factor is $h$.  Finally, we verify Conclusion b). We note that, by construction, $  \{ (x_i, \varphi^U(x_i)): - \ell_{n+1} \le  i \le \ell_{n+1}\}$ is $2\cdot 2^{-(n+1)}$ dense in $\nfence$. Moreover, $\varphi^U \le \varphi^U_n$, and  Condition 1 implies that $\varphi^U_{n+1}(x_i) = \varphi^U(x_i)$, $- \ell_{n+1} \le  i \le \ell_{n+1}$. Putting these facts together, we have that $\{(x_i, \varphi^U (x_i) \}$ is dense in $\fence$. As $\hat{h}$ is a homeomorphism and  $x_i =h ^i(x)$, we have that the orbit  of $(x, \varphi^U (x))$ under $\hat {h}$ is $\{(x_i, \varphi^U (x_i) \}$, completing the proof.

\end{proof}
 \begin{theorem}\label{thm:chaotic}
        Let $h \in \groupcantor$ be chaotic. Then, there exist $\Phi = (\varphi^L, \varphi^U)$ such that $\fence$ is a Lelek fence and a chaotic $\hat{h} \in \groupfence$ so that $h$ is the canonical factor of $\hat{h}$.
    \end{theorem}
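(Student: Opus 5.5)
We plan to combine the construction in the proof of Theorem~\ref{thm:transitive} with a mechanism that also produces dense periodic points. Chaos (Devaney) for the homeomorphism $\hat h$ means transitivity plus density of periodic points; since $h$ is chaotic it is transitive, so we fix $x\in\cantor$ with dense orbit, and its orbit is infinite because the Cantor space has no isolated points. We take $\varphi^L=0$. As in Theorem~\ref{thm:transitive}, $G_n$ will be clopen partitions with $\Psi_n$ given by containment and edges $\oa{uv}$ whenever $h(u)\cap v\neq\emptyset$. Theorem~\ref{thm:mainF} then yields the homeomorphism $\hat h$ with factor $h$, and the argument of Theorem~\ref{thm:transitive} makes $(x,\varphi^U(x))$ transitive.

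Periodic points need a separate mechanism. Theorem~\ref{thm:GenFraisse}(3) is unavailable, since the Lelek fence is not two-sided. Instead, we use that if $p$ is $h$-periodic of period $k$, then $\hat h^k$ maps the fiber $\fence(p)=\{p\}\times[0,\varphi^U(p)]$ to itself by an orientation-preserving homeomorphism. On the dense set $E^U$, $\hat h(y,t)=(h(y),s(y)t)$, so by continuity $\hat h^k(p,t)=(p,\,\sigma t)$ with $\sigma=s(p)s(h(p))\cdots s(h^{k-1}(p))$. If $\varphi^U(p)>0$ then $\sigma=\varphi^U(h^kp)/\varphi^U(p)=1$ by Lemma~\ref{lem:contS}. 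Hence every point of $\fence(p)$ is periodic whenever $\varphi^U$ is positive along the orbit of $p$. It therefore suffices to choose a countable dense set $P$ of $h$-periodic points, which exists because $h$ is chaotic, and to keep $\varphi^U>0$ on $P$, with the fibers over $P$ dense in $\fence$.

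To arrange this, we enumerate $P=\{p_1,p_2,\dots\}$. At stage $n+1$, besides the requirements of Theorem~\ref{thm:transitive}, we choose $U\ni x_0$ and $\ell_{n+1}$ so that the sets $h^j(U)$, $|j|\le\ell_{n+1}$, avoid the finite set $O_{n+1}=\bigcup_{i\le n+1}\orb(p_i)$. This is possible since $x$ is not periodic, so no $x_j$ lies in $O_{n+1}$, and we may shrink $U$. Atoms of $G_{n+1}$ meeting $O_{n+1}$ then fall in the ``otherwise'' case, where they receive the factor $r^{t-1}$. To prevent this repeated shrinking from driving $\varphi^U(p_i)$ to $0$, we modify the definition: cells containing points of $O_{n}$ keep $\varphi^U_{n+1}=\varphi^U_n$ (factor $1$). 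The ratio bounds in Condition 2 must still hold at such cells. Since $G_{n+1}$ can be chosen fine enough that the cells around the periodic orbit form a cycle separated from the $H_i$'s, we give the whole periodic-orbit neighborhood factor $1$. We then need edges between these cells and neighbouring ``otherwise'' cells (factor $r^{t-1}$) to have small ratio change. This is the main obstacle. We plan to handle it by a gradient: using continuity of $h$, we choose nested neighbourhoods of the finite orbit $O_{n+1}$ on which the factor decreases geometrically from $1$ to $r^{t-1}$ in steps of $r$, exactly as along the $H_i$ blocks. The $k$-th ring is chosen so that $h^{\pm1}$ maps it into rings $k\pm1$, which is possible because $h$ permutes the finite set $O_{n+1}$ and is uniformly continuous.

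With these constraints, $\Gamma_n<2^{-(n+1)}$ as before, so Condition~$\Gamma$ holds. Condition 3 still gives $\eta_n^+\to0$, so the fence is a Lelek fence by Theorem~\ref{thm:fenceconst}. Moreover, $\varphi^U(p_i)=\varphi^U_{i}(p_i)>0$ for each $i$. Density of periodic points then follows: given an open set, pick $p_i$ close to its base point with $\varphi^U(p_i)$ above the relevant height. This uses that upper endpoints over the finite orbits are prescribed densely, which we ensure by also assigning every value $r^s\varphi^U_n(g)$ to some cell containing a newly added periodic point. Every point of the fiber $\fence(p_i)$ is then periodic, which completes density of periodic points.
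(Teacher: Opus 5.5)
Your argument works in outline, but it takes a different route from the paper.

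The paper does not keep a fixed transitive orbit. At stage $n+1$ it picks a \emph{new} periodic point $p_{n+1}$, of large period and off the earlier periodic orbits, whose orbit segment realizes all edges of $G_n$ in each of the $2t$ blocks. It builds the $H_i$ along that segment and reverses the scaling of Theorem~\ref{thm:transitive}: factor $1$ on every cell outside the blocks, decreasing to $r^{t-1}$ toward the middle of the segment and returning to $1$ at its ends. Since everything off the current blocks keeps factor $1$, earlier periodic orbits keep their values automatically. The ratio condition at the segment ends is also automatic, so no gradient is needed. Because $\hat h$ maps upper endpoints to upper endpoints, the single periodic orbit of $(p_{n+1},\varphi^U(p_{n+1}))$ is roughly $2^{-n}$-dense in $\fence$. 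The characterization of chaos by arbitrarily dense periodic orbits then gives transitivity and dense periodic points together.

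Your version keeps the factor $r^{t-1}$ off the blocks, and that is what forces the rings $W_0\subset\dots\subset W_{t-1}$ around $O_{n+1}$ with $h^{\pm1}(W_k)\subseteq W_{k+1}$. For transitive $h$ there are no proper invariant clopen sets, so the cells around $O_{n+1}$ cannot form an isolated cycle. For the same reason, the rings must be chosen disjoint from $h^j(U)$ for $|j|\le\ell_{n+1}+1$, and $G_{n+1}$ must refine them. In exchange you get a prescribed transitive point $(x,\varphi^U(x))$ on top of chaos. Your observation that $\hat h(y,t)=(h(y),s(y)t)$ on all of $\fence$, so that the whole fiber over a periodic $p$ with $\varphi^U(p)>0$ consists of periodic points, is correct and is a general fact the paper does not use.

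One step needs tightening. A one-point-per-stage enumeration of a dense set $P$ does not give ``$\varphi^U(p_i)$ above the relevant height.'' A periodic point first protected at a late stage may already have been multiplied by $r^{t-1}$ several times while sitting in ``otherwise'' cells. What you need is this: at stage $n+1$, choose a new periodic point in every cell $g\in G_n$ before choosing $U$, and put its orbit in $W_0$ with factor $1$. Then $\varphi^U(p)=\varphi^U_n(g)\ge\varphi^U(y)$ for all $y\in g$. Given $(y,t)\in\fence$, your fiber observation makes $(p,t)$ a periodic point within the mesh of $G_n$ of $(y,t)$. The alternative device of assigning every value $r^s\varphi^U_n(g)$ to new periodic points is then unnecessary. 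As written, it also conflicts with giving $W_0$ factor $1$.
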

    \begin{proof}
    Recall that in the setting of compact metric space $X$ a map is chaotic if and only if for every $\eps>0$ there exists a periodic point whose orbit is $\eps$-dense in $X$. We will use this definition throughout the proof.

        We will appropriately modify the proof of Theorem~\ref{thm:transitive} to obtain the desired homeomorphism.
        As in the Theorem~\ref{thm:transitive}, we let $\varphi^L=0$ and we define a sequence of partitions $\{G_n\}\subset \cantor$ with mesh less than $2^{-n}$ and $\Phi$, so that $\fsystem$ is an  $\cF$-system associated with $h$ satisfying Conditions $\Gamma$ and that $\fence$ is a Lelek fence. 

         The general strategy is as follows: For each $n$, we will choose a periodic point $p_n$ of $h$, $\ell_n \in \N$ and an open set $U\subset \cantor$ containing $p_n$ such that $h^i(U) \cap h^{i'}(U) = \emptyset$ for $ -\ell_n  \le i< i' \le \ell_n$. We will extend the collection $\{h^i(U): -\ell_n \le i \le \ell_n\}$ to a clopen partition $G_n$ of $ \cantor$. Function $\varphi^U_n$ will be defined on $G_n$ in an appropriate fashion so that the resulting $\Phi$ has the property that $\fence$ is a Lelek fence. We will again use inductive approach.
         
 Let $G_1= \{\cantor\}$ and $\varphi^U_1:G_1 \rightarrow (0,1]$ be defined as $\varphi_1(\cantor) =1$.
 
          Suppose that $G_n$, $\varphi^U_n: G_n \rightarrow (0,1]$ and auxiliary parameters $\ell_n \in \N$ and periodic points $p_1,\ldots, p_n$ have been defined. We proceed to choose periodic point $p_{n+1}$ and define $G_{n+1}$. Using the fact that $h$ is chaotic, we may choose a periodic point $p_{n+1}$ not intersecting orbits of $\{p_1, \ldots p_n\}$ and $ \ell_{n+1} > \ell_n$ and an increasing sequence of integers $\{m_i\}_{i=-t}^t$ with $m_0 =0$,   $m_{-t} = -\ell_{n+1}$, $m_t = \ell_{n+1}$,  so that for all    $\oa{g_1g_2} \in G_n$ and for all $-t \le i < t$  there exists $j$ with $m_i \le j < m_{i+1}$ such that $h^j(p_{n+1}) \in g_1$ and $h^{j+1}(p_{n+1}) \in g_2$. In other words, intuitively speaking, for all $-t\leq i< t$, $\{h^{m_i}(p_{n+1}), \ldots, h^{m_{i+1}-1}(p_{n+1})\}$ realizes all the edges of $G_n$. 

    Now choose  a clopen set $U$, containing $h^{-\ell_{n+1}}(p_{n+1})$ such that the collection $\{h^j(U):-\ell_{n+1} \le j \le \ell_{n+1}\}$ is pairwise disjoint, refines $G_n$, contains no points of the orbits of $\{p_1, \ldots, p_n\}$ and has mesh less than $2^{-(n+1)}$.   Let $H_i =\{h^j(U): m_i \le j <m_{i+1} \}$ for $-t \leq i<t$. Finally, enlarge the collection $\cup_{i=-t}^{t-1}H_i$ to form a clopen partition $G_{n+1}$, with mesh less than $2^{-(n+1)}$ which refines $G_n$. 
    We now define $\varphi^U_{n+1}: G_{n+1} \rightarrow (0,1]$. Let $g \in G_{n+1}$ with $g \subseteq g'$, $g' \in G_n$. Then,
    \[ \varphi^U_{n+1}(g) = \begin{cases} 
          \varphi^U_n(g') r^{t-1-i }&  \textit { if }  \ \ g \in H_{i},\ \ \ 
           0 \le i < t  \ \ \ \\
           \varphi^U_n(g') r^{t-|i| }&  \textit { if }  \ \ g \in H_{i},  \ \ \ 
           -t \le i < 0 \\
          \varphi^U_n(g')  & \textit{ otherwise }
       \end{cases}
    \]

      We claim that the following conditions hold at stage $n+1$. 
    \begin{enumerate}
        \item for all $ g\in G_{n+1}$, if $g\cap \cup^{n}_{i=1}\orb(p_i,h)\neq \emptyset$ then $\varphi^U_{n+1}(g) = \varphi^U_{n}(g') $
        where $g' \in G_n$ is such that $g \subseteq g'$.
        \item if $\oa{g_1g_2}$ or  $\oa{g_2g_1}\in G_{n+1}$, then 
        \[  \left | \frac{\varphi^U_{n+1}(g_1)}{\varphi^U_{n+1}(g_2)}  -\frac{\varphi^U_n(g_1')}{\varphi^U_n(g_2')}\right | < 2^{-(n+1)}
        \]
        where $g_i \subseteq g_i'$, $g_i' \in G_n$ for $i=1,2$,

        \item For all $g \in G_n$ and $ 0\le s < t$, there is $g' \in G_{n+1}$ such that $\varphi^U_{n+1} (g')=r^s\varphi^U_n(g)$.
  
    \end{enumerate}

These three conditions are verified analogously as in Theorem~\ref{thm:transitive}.    

That $\fsystem$ is an $\cF$-system follows from the definition of $\varphi^U_{n+1}$. 
Finally, define $\varphi^U = \lim \varphi^U_n$. Condition 2. above implies that the $\Gamma_n < 2^{-(n+1)}$ and hence  Condition $\Gamma$ holds. Furthermore, similarly as in Theorem~\ref{thm:transitive}, Condition 3. implies that $\eta^+_n < 2^{-(n+1)}$. Therefore, by Theorem~\ref{thm:fenceconst} we have that $\fence$ is a Lelek Fence. Thus we have that  $\fsystem$ is an $\cF$-system satisfying Condition $\Gamma$ with $\{(x, \varphi^U(x)\}$ is dense in $\fence$.
Now applying Theorem~\ref{thm:mainF}, we obtain that the resulting $\hat{h}$ is a homeomorphism of $\fence$ whose canonical factor is $h$. Now we observe that $\hat h$ is chaotic. It suffices to show that periodic orbits are arbitrarily dense in $\fence$. To this end, note that \[
\{(h^{i}(p_{n+1}), \varphi^U_{n+1}((h^{i}(p_{n+1})): -\ell_{n+1} \le i \le \ell_{n+1} \}\] is $2^{-n}+ 2^{-(n+1)}$ dense in the $n+1$ stage of construction of $\fence$. This follows from our construction of $\varphi^U_{n+1}$ and the fact that the mesh of $G_n$ is less than $2^{-n}$.
Now by Condition 1., it follows that the orbit of $(p_{n+1}, \varphi^U(p_{n+1}))$ is $2^{-n}+ 2^{-(n+1)}$ 
dense in $\fence$, yielding that $\hat{h}$ is chaotic.
    \end{proof}

\begin{remark}
Theorem~\ref{thm:transitive} and Theorem~\ref{thm:chaotic} take different approaches. In Theorem~\ref{thm:transitive}, at each step we consider a part of a dense orbit and consider its extension in successive steps. On the other hand, in Theorem~\ref{thm:chaotic} we take a new finite orbit at each step and use it to define $H_i$'s. As at stage $n+1$, we want to preserve the behavior of finite orbits considered at the previous stages, in Theorem~\ref{thm:chaotic}, $\varphi^U_{n+1}$ is  defined in a manner that is in some sense reverse  from Theorem~\ref{thm:transitive}.
\end{remark}
\begin{remark}
From Theorem~\ref{thm:chaotic} one can show that there are uncountably many pairwise non-conjugate chaotic maps of Lelek Fence as well as of Lelek fan. Indeed, let $A \subset \N$ be an infinite set consisting of primes. Let $X_A := \Pi _{n \in A} \Z_n$ and let $h_n:\Z_n \rightarrow \Z_n$ be the $+1$ map modulo $n$. Then, defining $h_A: X_A \rightarrow X_A$ as the product map $\Pi_{n \in A} h_n$, we have that $(X_A,h_A)$ is a chaotic Cantor system such that the set of periods of periodic points of $h_A$ is $A$. Now, by Theorem~\ref{thm:chaotic}, there is $\widehat{h_A}$ on some Lelek fence $\fence(A)$ whose set of periods of periodic points is $A$. Moreover, we have that for each $n \in A$, there is periodic point of $\widehat{h_A}$ which is not in the base of $\fence(A)$. Hence, identifying the base, we have that there is a Lelek fan and a homeomorphism of it whose set of periodic points is $A$. As the set of periodic points is preserved under conjugation and there are uncountably many distinct such sets $A$, we obtain the desired result. 
\end{remark}

\begin{theorem}\label{thm:mixing}
Let $h:\cantor \rightarrow \cantor$ be a mixing homeomorphism satisfying the following conditions:
for every pair of non-empty open sets \( U, V \subseteq \cantor \), there exists a  compact set \( K_{U,V}\subset \cantor  \) and $m(U,V) \in \N$ such that:
\begin{itemize}
\item $K_{U,V}$ is nowhere dense in $\cantor$ and $h(K_{U,V} )=K_{U,V}$,
    \item for all \( m \geq m(U,V) \), \( h^m(K_{U,V} \cap U) \cap V \neq \emptyset \).
\end{itemize}

Then $h$ admits a lifting $\hat{h}$ to the Lelek fence which is a mixing homeomorphism.
\end{theorem}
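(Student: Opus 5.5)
We follow the scheme of Theorems~\ref{thm:transitive} and \ref{thm:chaotic}. We build an $\cF$-system $\fsystem$ for $h$ with $\varphi^L\equiv 0$, where $G_n$ are clopen partitions of mesh $<2^{-n}$ and $\Psi_n$ is containment. We ensure Condition $\Gamma$ (with $\Gamma_n<2^{-(n+1)}$) and $\eta^+_n\to 0$. Theorem~\ref{thm:fenceconst}(3) then makes $\fence$ a Lelek fence, and Theorem~\ref{thm:mainF}(1) yields the homeomorphism $\hat h$ with $\hat h(x,\varphi^U(x))=(h(x),\varphi^U(h(x)))$.

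For mixing, it suffices to check pairs of basic open sets $O=[g]\times(a,b)$, $O'=[g']\times(a',b')$ meeting $\fence$, with $g,g'\in G_k$. By density of upper endpoints, each contains an upper endpoint. The task is to find $M$ such that for every $m\ge M$ some $x\in g$ has $\varphi^U(x)\in(a,b)$ and $\varphi^U(h^m x)\in(a',b')$. Since $\hat h^m$ acts on upper endpoints by multiplying heights by $s(x)s(hx)\cdots s(h^{m-1}x)$, we need to control these products uniformly for large $m$.

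The plan is to use the sets $K_{U,V}$ as a ``frozen'' reservoir. Enumerate all pairs $(U_j,V_j)$ of clopen sets from $\bigcup_n G_n$, together with their height windows. At stage $n$ we handle finitely many pairs. Take $K=\bigcup K_{U_j,V_j}$ over the pairs treated so far; it is closed, invariant and nowhere dense. On cells meeting $K$ we keep $\varphi^U$ constant along the $h$-graph in a controlled way, in the spirit of condition c) of Theorem~\ref{thm:liftingisometriesLelek}: we set $\varphi^U_{n+1}=\varphi^U_n$ on refining cells that meet $K$. In addition, we arrange $s\equiv 1$ on $K$, so $\varphi^U$ is $h$-invariant on $K$. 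All the height variation needed for $\eta^+_n\to0$ is then placed on cells disjoint from $K$, using slowly varying geometric ratios $r^i$ exactly as in the earlier proofs, so that $\Gamma_n$ stays small.

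With $\varphi^U$ invariant on $K_{U,V}$ and $h^m(K_{U,V}\cap U)\cap V\ne\emptyset$ for all $m\ge m(U,V)$, a point $x\in K_{U,V}\cap U$ with $h^mx\in V$ has $\varphi^U(h^mx)=\varphi^U(x)$. So heights are matched across $O$ and $O'$ whenever the height windows overlap. To reach an arbitrary pair of windows, we first apply finitely many iterates of transitivity through non-$K$ cells to move heights. For $U,V$ we choose refined cells $U'\subset U$, $V'\subset V$ on which the frozen height equals a prescribed value in the target window. This is possible because $K_{U',V'}$ is a separate invariant set, which we introduce at a later stage with its own prescribed constant heights. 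Concretely, at stage $n$ we pick, for each relevant pair of cells and each pair of dyadic heights $(c,c')$, subcells where the heights are $c$ and $c'$ respectively. We then insert $K_{U',V'}$ and use one fixed connecting orbit segment of length $L$ that rescales $c$ to $c'$. The resulting $M$ is $m(U',V')+L$.

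The main obstacle is the compatibility requirement. Freezing $\varphi^U$ on the invariant sets $K_{U,V}$ (forcing $s=1$ there) must coexist with Condition $\Gamma$ near $K$, with the density of heights for $\eta^+_n\to0$ arbitrarily close to $K$, and with later stages not disturbing earlier frozen sets. We will first try to use nowhere density of $K$ to choose each $G_{n+1}$ so that cells meeting $K$ and cells carrying new height variation are separated by a buffer of cells of length exceeding $t$ along the graph. Ratios then change by at most a factor $r$ per edge, exactly as in the proof of Theorem~\ref{thm:transitive}, Condition 2. Example~\ref{rmk:mixing} (shifts, with $K$ taken as subshifts) serves as a check on this mechanism.
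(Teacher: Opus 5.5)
Your overall framework (containment partitions, $\varphi^L\equiv 0$, freezing $\varphi^U$ on cells meeting $K$, and putting all height variation on cells disjoint from $K$) is the paper's. There is, however, a genuine gap, and it comes from your extra requirement $s\equiv 1$ on $K$.

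In the Lelek case the lift acts fiberwise by $\hat h(x,t)=(h(x),s(x)t)$; this follows by extending $\hat h(x,\varphi^U(x))=(h(x),s(x)\varphi^U(x))$ by continuity. So with $s\equiv1$ on the invariant set $K$, every point of $\fence$ over $K$ keeps its height forever. The reservoir can then only connect open sets whose height windows overlap.

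Your proposed remedy does not repair this. Because $h(K_{U',V'})=K_{U',V'}$ and $h$ is a bijection, a point of $K_{U',V'}$ never leaves $K_{U',V'}$ in either time direction. Its trajectory therefore cannot be continued, or preceded, by a ``connecting orbit segment of length $L$'' that belongs to a different point. A point outside $K$, on the other hand, gets no control from the hypothesis, let alone for all $m\ge M$.

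The invariance requirement is also generally unattainable. A frozen $\varphi^U$ is locally constant on $K$, so $h$-invariance forces it to be constant on orbit closures. In Example~\ref{rmk:mixing} every orbit of every $K_{U,V}$ accumulates at $\underline 0$. All the sets $K_{U',V'}$ would then have to carry one common height, not ``their own prescribed constant heights''.

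The missing observation is that no height matching is needed. The construction goes as follows:
\begin{enumerate}
\item Take $K_n$ increasing, with $K_n\supseteq K_{g,g'}$ for all $g,g'\in G_n$.
\item Place each new ramp on an orbit segment of a transitive point outside $K_n$. Shape the ramp as in Theorem~\ref{thm:chaotic}, i.e.\ factor $1$ off the ramp and at its ends, not as in Theorem~\ref{thm:transitive}.
\item Then cells meeting $K_n$ are never changed again, and the ramp itself supplies the slow variation giving $\Gamma_n<2^{-(n+1)}$.
\item Consequently $\varphi^U=\varphi^U_n$ on $K_n$, with different values on different cells.
\end{enumerate}
Since $\hat h$ maps upper endpoints to upper endpoints and $K_n$ is invariant, any $y\in K_n\cap g_1$ with $h^m(y)\in g_2$ satisfies $\hat h^m(y,\varphi^U_n(g_1))=(h^m(y),\varphi^U_n(g_2))$ exactly. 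Any two nonempty open subsets of $\fence$ contain sets $g_i\times\{\varphi^U_n(g_i)\}$ with $g_i\in G_n$ for large $n$, by density of upper endpoints and $\varphi^U_n\downarrow\varphi^U$. Mixing then follows with $M=m_n$. This is the paper's argument.

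Your own step of choosing $U'\subset U$ and $V'\subset V$ whose frozen heights lie in the two target windows already completes the proof once you drop $s\equiv1$ on $K$. The points of $K_{U',V'}$ then work directly, with no connecting segment.
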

\begin{proof}
We let $\varphi^L=0$ and 
    we will define a sequence of partitions $\{G_n\}$  of $\cantor$ and a sequence of functions $\{\varphi^U_n\}$, so that $\fsystem$ will be an  $\cF$-system associated with $h$. In particular, $
    \Psi_n:G_{n+1} \rightarrow G_n$ is the  containment map and $ \oa{uv} \in G_n$ if and only if $h(u) \cap v \neq \emptyset$.  This system will satisfy Condition $\Gamma$ and $\fence$ will be a Lelek fence. Applying Theorem~\ref{thm:mainF}, we will obtain $\hat h$ that is a homeomorphism of $\fence$. Then, we will verify that $\hat{h}$ is mixing.

     Let $G_1= \{\cantor\}$ and $\varphi^U_1:G_1 \rightarrow (0,1]$ be defined as $\varphi^U_1(\cantor) =1$. By hypothesis, let $K_1=K_{\cantor,\cantor}$ and let $m_1 = m(\cantor,\cantor)$.
     
Suppose we are at stage $n$,  compact set $K_n$, $\varphi^U_{n}$, integer $m_n$ and  clopen partitions $G_n$  have been defined so that the following conditions hold.
        \begin{enumerate}[label=(\alph*)]
            \item $h(K_n) = K_n$ and $K_n$ is nowhere dense in $\cantor$.
    \item For all $g, g' \in G_n$ and all $m \ge m_n$ we have that $h^m(K_n \cap g ) \cap g' \neq \emptyset$.
    \item  For all $ g\in G_n$  with $g \cap K_{n-1}  \neq \emptyset$ and $g' \in G_{n-1}$ with $g \subseteq g'$ we have that  $ \varphi^U_{n}(g)= \varphi^U_{n-1}(g')$.
        \end{enumerate}

        We now describe how to construct $K_{n+1}$, $\varphi^U_{n+1}$, $m_{n+1}$ and $G_{n+1}$.

  Let \[  N =  \max  \left \{ \frac{\varphi^U_n(g_1)}{\varphi^U_n(g_2)}: \oa{g_1g_2} \textit { or } \oa{g_2g_1} \in G_n \right \}.\]
    
    Choose $0 < r <1$ so that each of $(1-r)N, |1-r^{-1}|N$ is less than $2^{-(n+1)}$. Now choose $ t \in \N$ sufficiently large so that $r^{t-1}< 2^{-(n+1)}$. Observe that  $\{\varphi^U_n(g), r\varphi^U_n(g), r^2 \varphi^U_n(g), \ldots,\\
    r^{t-1} \varphi^U_n(g)\}$ is $2^{-(n+1)}$ dense in $[0, \varphi^U_n(g)]$, for all $g \in G_n$.

As $h$ is transitive, there is $x \in \cantor$ such that $\orb(h,x)$ is dense in $\cantor$ and  $\{h^j(x): j \in \Z \} \cap K_n = \emptyset$. Let $x_j = h^j(x)$, $j \in \Z$.
Observe that for all $\oa{g_1g_2} \in G_n$, there are infinitely $j$'s  such that $x_j \in g_1$ and $x_{j+1} \in g_2$. This indeed holds since $\{x_j\}_{j \ge 0}$ is dense in $\cantor$. Using this fact, we may choose $\ell\in \N$ and an increasing sequence of integers $\{m_i\}_{i=-t}^t$ with $m_0 =0$,   $m_{-t} = -\ell$, $m_t = \ell$,  so that for all    $\oa{g_1g_2} \in G_n$ and for all $-t \le i < t$  there exists $j$ with $m_i \le j < m_{i+1}$ such that $x_j \in g_1$ and $x_{j+1} \in g_2$. In other words, intuitively speaking, for all $-t \le i < t$, $\{x_{m_i}, \ldots x_{m_{i+1}-1}\}$ realizes all the edges of $G_n$.
     
 Now choose  a clopen set $U$, containing $x_0$ such that the collection $\{h^j(U):-\ell \le j \le \ell \}$ is pairwise disjoint, refines $G_n$ and is disjoint from $K_n$. We may do this as $\{x_n\} \cap K_n = \emptyset$. Let $H_i =\{h^j(U): m_i \le j <m_{i+1} \}$ for $-t \leq i<t$. Finally, enlarge the collection $\cup_{i=-t}^{t-1}H_i$ to form a clopen partition $G_{n+1}$ which refines $G_n$ and such that $mesh (G_{n+1})< 2^{-(n+1)}$. The definition of $G_{n+1}$ is complete.
 
    We now define $\varphi^U_{n+1}: G_{n+1} \rightarrow (0,1]$. Let $g \in G_{n+1}$ with $g \subseteq g'$, $g' \in G_n$. Then,
    \[ \varphi^U_{n+1}(g) = \begin{cases} 
          \varphi^U_n(g') r^{t-1-i }&  \textit { if }  \ \ g \in H_{i},\ \ \ 
           0 \le i < t  \ \ \ \\
           \varphi^U_n(g') r^{t-|i| }&  \textit { if }  \ \ g \in H_{i},  \ \ \ 
           -t \le i < 0 \\
          \varphi^U_n(g')  & \textit{ otherwise }
       \end{cases}
    \]
    In particular, if $g \cap K_n  \neq \emptyset$, then $ \varphi^U_{n+1}(g)= \varphi^U_n(g')$.

  Let  $K_{n+1} = \cup_{g_1,g_2 \in  G_{n+1}} K_{g_1,g_2}$ and $m_{n+1} > \max\{m_n, m(g,g'): g,g' \in G_n \}$. Then, $K_{n+1}$ and $m_{n+1}$ have properties (a)-(c) hold at stage $n+1$ and the induction step $n+1$ is complete. 

We claim that the following conditions hold at stage $n+1$.
    \begin{enumerate}
        \item for all $ g\in G_{n+1}$, if $g\cap K_n \neq \emptyset$ then $\varphi^U_{n+1}(g) = \varphi^U_{n}(g') $
        where $g' \in G_n$ is such that $g \subseteq g'$.
        \item if $\oa{g_1g_2}$ or  $\oa{g_2g_1}\in G_{n+1}$, then 
        \[  \left | \frac{\varphi^U_{n+1}(g_1)}{\varphi^U_{n+1}(g_2)}  -\frac{\varphi^U_n(g_1')}{\varphi^U_n(g_2')}\right | < 2^{-(n+1)}
        \]
        where $g_i \subseteq g_i'$, $g_i' \in G_n$ for $i=1,2$,
 
        \item For all $g \in G_n$ and $ 0\le s < t$, there is $g' \in G_{n+1}$ such that $\varphi^U_{n+1} (g')=r^s\varphi^U_n(g)$.
        
    \end{enumerate}

These three conditions are verified analogously as in Theorem~\ref{thm:transitive}.    

That $\fsystem$ is an $\cF$-system follows from the definition of $\varphi^U_{n+1}$. 
Finally, define $\varphi^U = \lim \varphi^U_n$. Condition 2. above implies that the $\Gamma_n < 2^{-(n+1)}$ and hence  Condition $\Gamma$ holds. Furthermore, similarly as in Theorem~\ref{thm:transitive}, Condition 3. implies that $\eta^+_n < 2^{-(n+1)}$. Therefore, by Theorem~\ref{thm:fenceconst} we have that $\fence$ is a Lelek Fence. Thus we have that  $\fsystem$ is an $\cF$-system satisfying Condition $\Gamma$ with $\{(x, \varphi^U(x)\}$ is dense in $\fence$.
Now applying Theorem~\ref{thm:mainF}, we obtain that the resulting $\hat{h}$ is a homeomorphism of $\fence$ whose canonical factor is $h$. 

Now we observe that $\hat h$ is mixing. First, by construction, we have that $\varphi ^U(y) = \varphi^U_n (y)$ for all $y \in K_n$. Recall that the topology on $\fence$ is inherited from the product topology on $\cantor \times [0,1]$. Let $u_i \times (a_i,b_i)$ be open in  $\cantor \times [0,1]$ such that $ u_i \times (a_i,b_i) \cap \fence  \neq \emptyset$, $i=1,2$. It will suffice to show that there exists $n \in \N$, for  all  $m \ge m_n$ we have that 
\[
\hat{h}^{m} (u_1 \times (a_1,b_1) \cap \fence) \cap (u_2 \times (a_2,b_2) \cap \fence)  \neq \emptyset.
\]
To this end, let  $y_i \in \cantor$ such that $(y_i, \varphi^U(y_i)) \in \fence\cap (u_i \times (a_i,b_i))$, $i=1,2$. As $\varphi^U$ is upper semicontinuous, we have that $\varphi^U ((-\infty, b_i))$ is an open subset of $\cantor$ containing $y_i$, $i=1,2$. As $\varphi^U$ is the limit of $\{\varphi^U_n\}$, there exists $n\in \N$ such that  we have $g_i \in G_n$ with $y_i \in g_i \subseteq u_i$ has the property that $g_i \times \{\varphi^U_n(g_i)\} \subseteq u_i \times (a_i,b_i)$ for $i=1,2$. By condition (b), for all  $m \ge m_n$, there is $y\in K_n \cap g_1$ such that  $h^m(y) \in g_2$. Note that 
\[(y, \varphi^U_n(y)) \in u_1 \times (a_1,b_1) 
\]
and 
\[(h^m(y), \varphi^U_n(h^m(y))) \in u_2 \times (a_2,b_2).
\]
As $y \in K_n$, we have that $\varphi^U(y) =\varphi^U_n(y)$ and $h(K_n) =K_n$, we have that $\varphi^U(h^m(y)) =\varphi^U_n(h^m(y))$. Hence, we have 
\[ (y, \varphi^U(y)) = (y, \varphi^U_n(y)) \in u_1 \times (a_1,b_1) \]
and \[ (h^m(y), \varphi^U(h^m(y))) = (h^m(y), \varphi^U_n(h^m(y))) \in u_2 \times (a_2,b_2). \]  Note that \[(y, \varphi^U(y)) \in (u_1 \times (a_1,b_1) \cap \fence) 
\]
and  
\[\hat{h}^m ((y, \varphi^U(y))) = (h^m(y), \varphi^U(h^m(y)))  \in (u_2 \times (a_2,b_2) \cap \fence),
\]
verifying that $\hat{h}$ is mixing and completing the proof.
\end{proof}

\begin{example}\label{rmk:mixing}
    Let us consider $h$ the shift map on the Cantor set $\cantor$ consisting of $\{0,\ldots, n-1\}^{\Z}$, where  $n \ge 2$. Then, $h$ is mixing, and we will show in the following that $h$ also satisfies the hypothesis of Theorem~\ref{thm:mixing}. Without loss of generality, we may assume that $U=[u_{-n}\ldots u_n]$ and $V=[v_{-n}\ldots v_n]$. For each $k \in \N$, we define $w_k \in \cantor$ as follows: 

   \begin{align*}
        w_k|_{[-n,n]} &=  u_{-n}\ldots u_n\\
         w_k|_{[n+k, 3n+k]} &= v_{-n}\ldots v_n\\
         w_k (i) & = 0  \ \ \textit {otherwise}\end{align*} 
         Moreover, let  
         \begin{align*}
        u_{\infty}|_{[-n,n]} &=  u_{-n}\ldots u_n\\
         u_{\infty} (i) &= 0  \ \
         \textit{otherwise}
         \\
         v_{\infty}|_{[-n,n]} &=  v_{-n}\ldots v_n\\
         v_{\infty} (i) &= 0  \ \ \textit{otherwise}
         \end{align*} 
         Then, $L = \{w_k: k \in \N\} \cup \{ u_{\infty}\}$ is a compact subset of $U$ such that for all $m \ge 2n+1$, we have that $h^m(L) \cap V \neq \emptyset$. Moreover,  $K_{U,V}:=\overline{\bigcup_{n \in \mathbb{Z}} h^n(L)}$ is  a countable  invariant set consisting of $\{ h^l(w_k), h^l(u_{\infty}), h^l(v_{\infty}): l \in \Z \} \cup \{\underline{0}\}$, and hence nowhere dense.
\end{example}

\section{Applications to dynamics on Fra\"iss\'e fence}\label{sec: Applications to dynamics on Fraisse fence}

In this section, we first generalize the realization theorem, Theorem~\ref{thm:mainF}, to obtain finer control over the resulting dynamics, readily applicable to two-sided Scissorhand fences (Theorem~\ref{thm:main}). As an application, we show that odometer Cantor systems admit liftings to minimal homeomorphisms of the Fra\"iss\'e fence. Moreover, this construction yields uncountably many pairwise non-conjugate minimal homeomorphisms, none of which factor onto another.

Let $\fsystem$ be an $\cF$-system.
For  $u,v \in G_n$, let $s_n (u,v)$ be the affine function which maps interval $[\varphi_n^L(u),\varphi_n^U(u)]$ onto $[\varphi_n^L(v),\varphi_n^U(v)]$. More explicitly, 
\begin{equation}
s_n (u,v)(t):=    \frac{\varphi_n^U(v)-\varphi_n^L(v)}{\varphi_n^U(u)-\varphi_n^L(u)}  (t - \varphi_n^L(u) ) + \varphi_n^L(v)
\end{equation}

Let $\oa{uv}\in G_n$. We define 
\begin{equation}\label{def:Fgamma_n_new}
\begin{split}
\Gamma_n(\oa{uv}):=  \max\Biggl\{ \left | s_n (u,v) (t)   - s_{n+1} (u',v') (t) \right | : \\
\oa{u'v'}\in G_{n+1}, \Psi_n(u')=u,  \Psi_n(v')=v, t \in [\varphi_{n+1}^L(u'),\varphi_{n+1}^U(u')]\Biggl\}. 
\end{split}
\end{equation}

As $s_n(u,v)$ and $s_{n+1} (u',v')$ are affine functions, we have that $| s_n (u,v) (t)   - s_{n+1} (u',v') (t)|$ is maximal at one of the  endpoints of the domain of $s_{n+1} (u',v')$, namely $t  \in \{ \varphi_{n+1}^L(u'),\varphi_{n+1}^U(u')\} $. Using this and some simple calculation yields that 
\begin{equation}\label{def:Fgamma_nendpoints}
\begin{split}
\Gamma_n(\oa{uv})=  \max\Biggl\{ \left ( \frac{\varphi_n^U(v)-\varphi_{n}^L(v)}{\varphi_n^U(u)-\varphi_n^L(u)} \right ) \left (\varphi_{n+1}^L(u')-\varphi_{n}^L(u) \right )- \left(\varphi_{n+1}^L(v')-\varphi_{n}^L(v) \right ),\\  \left ( \frac{\varphi_n^U(v)-\varphi_n^L(v)}{\varphi_n^U(u)-\varphi_n^L(u)} \right ) \left (\varphi_{n+1}^U(v')-\varphi_n^L(u) \right )- \left (\varphi_{n+1}^U(v')-\varphi_n^L(v) \right ): \\
\oa{u'v'}\in G_{n+1}, \Psi_n(u')=u,  \Psi_n(v')=v, t \in [\varphi_{n+1}^L(u'),\varphi_{n+1}^U(u')]\Biggl\}. 
\end{split}
\end{equation}
\begin{equation}
\Gamma_n:=\max\{\Gamma_n(\oa{uv}):  \oa{uv}\in G_n\}.
\end{equation}

We will say that $\cF$-system $\fsystem$ satisfies \emph{Condition $\Gamma$} if $\sum^{\infty}_{n=0} \Gamma_n < \infty$.

 For $n \in \N$ we define  \[F_n =  \left \{(x,t): x \in X,  \ t \in [\varphi^L_n(x(n)), \varphi^U_n(x(n))]  \right \} = \bigcup_{g \in G_n} [g] \times [\varphi^L_n(g), \varphi^U_n(g)]. \] 
 Note that 
 \[ \bigcap_{n\in \N} F_n = {\bf F}_{\Phi} =  \left \{(x,t): x \in X,  \ t \in [\varphi^L(x), \varphi^U(x)]  \right \}.  \]  Now define $\tilde{s}_n:F_n  \rightarrow \mathbb{R}$ by 
\[\tilde{s}_n(x, t ):= s_n(u, v)(t) \ \textit{ for } t \in [\varphi^L_n(u), \varphi^U_n(u)]\]
where $u:=x(n)$ and $v:=H_X(x)(n)$. Finally, we define $$s:{\bf F}_{\Phi}  \rightarrow \mathbb{R}$$
\begin{equation}\label{eq:s(x)_new}
s(x, t ):= \lim_{n\to \infty} \tilde{s}_n (x, t).
\end{equation}
By Condition $\Gamma$, we have that $\tilde{s}_n$ converge uniformly to $s$ on ${\bf F}_{\Phi}$.

The above will aid us in constructing a continuous surjection from ${\bf F}_{\Phi}$ to ${\bf F}_{\Phi}$. In order to make it a homeomorphism, we need a condition stronger than $\Gamma$, namely Condition $\Gamma^+$
 defined as follows:
 \begin{equation}\label{def:Fgamma_n+}
\begin{split}
\Gamma^+_n(\oa{uv}):=  \max\Biggl\{ \left | s_n (v, u) (t)   - s_{n+1} (v',u') (t) \right | : \\
\oa{u'v'}\in G_{n+1}, 
\Psi_n(u')=u,  \Psi_n(v')=v
,
t \in [\varphi_{n+1}^L(u'),\varphi_{n+1}^U(u')]\Biggl\}. 
\end{split}
\end{equation}
\begin{equation}
\Gamma^+_n:=\max\{\Gamma^+_n(\oa{uv}):  \oa{uv}\in G_n\}.
\end{equation}
We will say that $\cF$-system $\fsystem$ satisfies \emph{Condition $\Gamma^+$} if $\sum^{\infty}_{n=0} \Gamma^+_n < \infty$.

Define $\tilde s^{+}_n:F_n  \rightarrow \mathbb{R}$ by 
\[\tilde{s}^{+}_n(x, t ):= s_n(v, u)(t) \ \textit{ for } t \in [\varphi^L_n(v), \varphi^U_n(v)]\]
where $u:=x(n)$ and $v:=H_X(x)(n)$. Finally, we define $$s^{+}:{\bf F}_{\Phi}  \rightarrow \mathbb{R}$$
\begin{equation}\label{eq:s+(x)}
s^{+}(x, t ):= \lim_{n\to \infty} \tilde{s}^{+}_n (x, t).
\end{equation}
By Condition $\Gamma$, we have that $\tilde{s}^{+}_n$ converge uniformly to $s^{+}$ on ${\bf F}_{\Phi}$.

Note that for fixed $n$ and $u,v \in G_n$, $s_n(u,v)$ and $s_n(v,u)$ are inverses of each other. Hence $\tilde s_n(x, \cdot)$ and $\tilde s_n^+(x, \cdot)$ are inverses of each other for fixed $n$ and $x \in \cantor$, implying that $s(x, \cdot)$ and $s^+(x, \cdot) $ are inverses of each other for fixed $x \in \cantor$. In particular, when both $\Gamma$ and $\Gamma ^+$ are satisfied, we have that $s(x,\cdot)$ is one-to-one.

\begin{theorem}\label{thm:main}
Let $\fsystem$ be an $\cF$-system satisfying condition $\Gamma$ and  ${\bf F}_{\Phi}$ be the associated fence. Then, there exists a continuous surjection $T: {\bf F}_{\Phi}\rightarrow {\bf F}_{\Phi}$, with $\homeocantor$ as a factor, satisfying \[T\left(x, t \right)=(\homeocantor(x), s(x,t)).\]
 Moreover, 
\begin{enumerate}  
    \item if $\fsystem$ additionally satisfies Condition $\Gamma^+$ and $\homeocantor$ is a homeomorphism of $X$, then $T$ is a homeomorphism of ${\bf F}_{\Phi}$.
\end{enumerate}
\end{theorem}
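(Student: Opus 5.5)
Define $T(x,t)=(\homeocantor(x),s(x,t))$ directly on $\fence$ and check that it is well defined, continuous, surjective, and (under $\Gamma^+$) invertible.

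\textbf{Step 1: $T$ maps $\fence$ into $\fence$.} Fix $(x,t)\in\fence$ and put $u_n=x(n)$, $v_n=\homeocantor(x)(n)$. Since $(x,\homeocantor(x))$ lies in the graph of $\homeocantor$, we have $\oa{u_nv_n}\in G_n$ for every $n$. Also $t\in[\varphi^L_m(u_m),\varphi^U_m(u_m)]$ for all $m$, so $\tilde s_m(x,t)=s_m(u_m,v_m)(t)\in[\varphi^L_m(v_m),\varphi^U_m(v_m)]$, because $s_m(u_m,v_m)$ is an affine bijection between these intervals. For fixed $n$ and all $m\ge n$ these intervals are nested inside $[\varphi^L_n(v_n),\varphi^U_n(v_n)]$ by property (2) of an $\cF$-structure. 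Letting $m\to\infty$ gives $s(x,t)\in[\varphi^L_n(v_n),\varphi^U_n(v_n)]$ for all $n$. Hence $s(x,t)\in[\varphi^L(\homeocantor(x)),\varphi^U(\homeocantor(x))]$, and $T$ is a lifting of $\homeocantor$ in the sense of Definition~\ref{def:lifting}.

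\textbf{Step 2: continuity.} Each $\tilde s_n$ restricted to $\fence$ is continuous. On each clopen piece $([u]\cap\homeocantor^{-1}[v])\times[0,1]$ it is a fixed affine function of $t$, and these pieces partition $\fence$. The functions $\tilde s_n$ converge uniformly to $s$ on $\fence$, since the increments $|\tilde s_n-\tilde s_{n+1}|\le\Gamma_n$ pointwise by definition of $\Gamma_n(\oa{u_nv_n})$ and $\sum\Gamma_n<\infty$. So $s$ is continuous, and therefore $T$ is continuous.

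\textbf{Step 3: surjectivity.} Fix $(y,r)\in\fence$ and choose $x$ with $\homeocantor(x)=y$. The map $\tilde s_m(x,\cdot)$ sends $[\varphi^L_m(u_m),\varphi^U_m(u_m)]$ onto $[\varphi^L_m(v_m),\varphi^U_m(v_m)]$, and the latter contains $r$, so pick $t_m$ with $\tilde s_m(x,t_m)=r$. Pass to a subsequence with $t_m\to t$. By nestedness $t\in\fence(x)$, and uniform convergence together with equicontinuity gives $s(x,t)=r$. The equicontinuity in $t$ is the delicate point: the affine slopes are ratios of interval lengths, which may degenerate. To avoid this, I would use compactness of $\fence$ instead. $T(\fence)$ is compact, and it contains, for each $n$, points $2\Gamma$-close to every point of $\bigcup_g [g]\times\ldots$; from this, density of $T(\fence)$ in $\fence$ plus compactness yields surjectivity. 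Concretely, $|T(x,t_m)-(y,r)|\le\sup_{k\ge m}\Gamma$-tail $\to0$, so $(y,r)$ lies in the closed set $T(\fence)$.

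\textbf{Step 4: homeomorphism under $\Gamma^+$.} Define $S(y,r)=(\homeocantor^{-1}(y),s^+(\homeocantor^{-1}(y),r))$. By the remark preceding the theorem, $s(x,\cdot)$ and $s^+(x,\cdot)$ are mutually inverse. This is obtained by passing to the limit in $\tilde s_n^+(x,\tilde s_n(x,t))=t$, using uniform convergence of both sequences plus the tail estimate above. Hence $S\circ T=\mathrm{id}$, so $T$ is a continuous bijection of a compact metric space and therefore a homeomorphism.

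The main obstacle is the limit passage in Steps 3 and 4 when fiber lengths shrink. There I would rely on the uniform tail bound $\sum_{k\ge m}\Gamma_k$ applied at the parameter $t_m$, rather than on any Lipschitz control of the affine maps.
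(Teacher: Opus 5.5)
Your Steps 1 and 2 are correct and match what the paper intends. The paper states this theorem without a separate proof; it relies only on the preceding remarks that $\tilde s_n\to s$ uniformly on $\fence$ and that $s(x,\cdot)$, $s^+(x,\cdot)$ are mutually inverse.

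\textbf{The gap is in Step 3, and Step 4 inherits it.} Write $I_g=[\varphi^L_m(g),\varphi^U_m(g)]$ for $g\in G_m$.
\begin{itemize}
\item Your $t_m$ lies only in $I_{u_m}$, not necessarily in $\fence(x)$. So $T(x,t_m)$, i.e.\ $s(x,t_m)$, is undefined.
\item The forward tail bound $|s(x,t)-\tilde s_m(x,t)|\le\sum_{k\ge m}\Gamma_k$ holds only for $t\in\fence(x)$, because each increment bound $|\tilde s_k(x,t)-\tilde s_{k+1}(x,t)|\le\Gamma_k$ requires $t\in I_{u_{k+1}}$.
\item Moving $t_m$ to a nearby point of $\fence(x)$ would need control of the slopes $|I_{v_m}|/|I_{u_m}|$ of $\tilde s_m(x,\cdot)$. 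This affine form of Condition $\Gamma$ does not give that (there is no analogue of Lemma~\ref{lem:bounded}), and those slopes may blow up when $\fence(x)$ is degenerate.
\end{itemize}
So the density of $T(\fence)$ is asserted, not proved. Step 4 has the same problem. Passing from $\tilde s^+_n(x,\tilde s_n(x,t))=t$ to $s^+(x,s(x,t))=t$ by uniform convergence needs control of the slopes $|I_{u_n}|/|I_{v_n}|$ of $\tilde s^+_n(x,\cdot)$.

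\textbf{The missing idea} is to telescope backwards to a fixed level $n$ and take limits in the right order. For $n\le m$ and any $t'\in I_{u_m}$, we have $t'\in I_{u_{k+1}}$ for all $k<m$, hence
\[
|\tilde s_n(x,t')-\tilde s_m(x,t')|\le\sum_{k=n}^{m-1}\Gamma_k .
\]
\begin{itemize}
\item Take $t'=t_m$. Since $\tilde s_m(x,t_m)=r$, this gives $|\tilde s_n(x,t_m)-r|\le\sum_{k\ge n}\Gamma_k$.
\item Let $m\to\infty$ along your subsequence with $n$ fixed. Only the single affine map $\tilde s_n(x,\cdot)$ is involved, so no equicontinuity is needed, and you get $|\tilde s_n(x,t)-r|\le\sum_{k\ge n}\Gamma_k$.
\item Now let $n\to\infty$. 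This is legitimate because $t\in\fence(x)$, and it gives $s(x,t)=r$.
\end{itemize}
Equivalently, apply the estimate to $t'=\varphi^L_m(u_m)$ and $t'=\varphi^U_m(u_m)$. This shows $s(x,\cdot)$ sends the endpoints of $\fence(x)$ to those of $\fence(\homeocantor(x))$, and surjectivity follows from the intermediate value theorem.

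For Step 4, put $w_m=\tilde s_m(x,t)\in I_{v_m}$, so that $\tilde s^+_m(x,w_m)=t$. The analogous estimate with $\Gamma^+_k$ gives $|\tilde s^+_n(x,w_m)-t|\le\sum_{k\ge n}\Gamma^+_k$. This requires reading the range of $t$ in the definition of $\Gamma^+_n$ as $[\varphi^L_{n+1}(v'),\varphi^U_{n+1}(v')]$, the domain of $s_{n+1}(v',u')$. Let $m\to\infty$, using $w_m\to s(x,t)$, and then $n\to\infty$, using $s(x,t)\in\fence(\homeocantor(x))$ from your Step 1. This gives $s^+(x,s(x,t))=t$, hence $S\circ T=\mathrm{id}$.

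With these two estimates your outline becomes a complete proof. It is then more careful than the paper's sketch, whose remark about $s$ and $s^+$ hides exactly this limit exchange.
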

\begin{lemma}\label{lem:odometerminimal}
Suppose that a digraph $G$ is a cycle, 
   $\varphi^L, \varphi ^U: G \rightarrow [0,1]$ with $\varphi^L< \varphi^U$ and $\varepsilon >0$. Then, there exists a digraph $\tilde G$ which is a cycle, $\Psi: \tilde G \rightarrow G$ a surjective edge preserving homomorphism, and $\tilde\varphi^L, \tilde\varphi^U:\tilde G \rightarrow [0,1]$ such that 
   \begin{enumerate}[label=(\alph*)]
       \item if $\tilde g \in \tilde G$ and $\Psi(\tilde g) =g$, then $\varphi^L(g) \le \tilde\varphi^L(\tilde g) < \tilde\varphi^U(\tilde g)\le \varphi^U (g)$,
       \item for all $g \in G$, there exists  $\tilde g \in \tilde G$ with $\Psi(\tilde g) = g$ such that $\varphi^L(g) = \tilde\varphi^L(\tilde g)$ and $ \tilde\varphi^U(\tilde g)= \varphi^U (g)$,
       \item for all $g\in G$ and all $a, b$ with $ \varphi^L(g) \le a < b \le   \varphi^U (g)$, there exists  $\tilde g \in \tilde G$ with $\Psi(\tilde g) = g$ such that $|a- \tilde\varphi^L(\tilde g)| < \varepsilon$ and $ |b- \tilde\varphi^U(\tilde g)| < \varepsilon$.
      \item Letting $G_n = G$ and $G_{n+1} = \tilde G$, we have that $\Gamma_n, \Gamma^+_n < \varepsilon$, as defined in Equation~\eqref{def:Fgamma_n_new} and \eqref{def:Fgamma_n+}. In other words, whenever $\oa{\tilde g_1 \tilde g_2} \in \tilde G$ with $\Psi(\tilde g_i) = g_i$, the affine map $\alpha_1$ which takes $[\varphi^L(g_1), \varphi^U(g_1)]$ to $[\varphi^L(g_2), \varphi^U(g_2)]$ is $\varepsilon$-close to the affine map $\alpha_2$ which takes $[\tilde\varphi^L(\tilde g_1), \tilde\varphi^U(\tilde g_1)]$ to $[\tilde\varphi^L(\tilde g_2), \tilde\varphi^U(\tilde g_2)]$ on their common domain and also $\alpha_1^{-1}$ is $\varepsilon$-close to $\alpha_2^{-1}$ on their common domain.  
      \end{enumerate}
      Moreover, there exists $m_0 \in \N$ such that for all $m \ge m_0$ we can choose $\tilde G$ so that  $|\tilde G|=m|G|$.
\end{lemma}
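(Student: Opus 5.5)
The idea is to build $\tilde G$ as a cycle of length $m|G|$ that winds $m$ times around $G$, so $\Psi$ maps the $j$-th lap onto $G$ by position. Write $G=(g_0,\dots,g_{k-1})$ with edges $\oa{g_ig_{i+1}}$ (indices mod $k$). Set $\tilde G=(\tilde g_0,\dots,\tilde g_{mk-1})$ with $\Psi(\tilde g_{jk+i})=g_i$; this is clearly a surjective edge-preserving homomorphism onto $G$. The labels will be constant \emph{in relative coordinates} along long stretches and change only slowly. Every interval $[\varphi^L(g),\varphi^U(g)]$ is identified with $[0,1]$ via the affine map $A_g(t)=\varphi^L(g)+t(\varphi^U(g)-\varphi^L(g))$. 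We choose a sequence of relative intervals $[a_p,b_p]\subseteq[0,1]$, $p=0,\dots,mk-1$, and put $\tilde\varphi^L(\tilde g_p)=A_{\Psi(\tilde g_p)}(a_p)$ and $\tilde\varphi^U(\tilde g_p)=A_{\Psi(\tilde g_p)}(b_p)$. Then (a) holds as long as $0\le a_p<b_p\le 1$.

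The key observation is that if $[a_p,b_p]=[a_{p+1},b_{p+1}]$ then $\alpha_2$ coincides with $\alpha_1$ on the common domain, because both equal $A_{g_{i+1}}\circ A_{g_i}^{-1}$ restricted. If instead the relative intervals differ by at most $\delta$ at the endpoints, then $\alpha_2$ differs from $\alpha_1$ by at most roughly $\delta\cdot\max$ (length ratio) plus the effect of the change in slope. Since $G$ is finite and $\varphi^L<\varphi^U$, the quantity
\[
R=\max_{i}\frac{\varphi^U(g_{i+1})-\varphi^L(g_{i+1})}{\varphi^U(g_i)-\varphi^L(g_i)}
\]
and its reciprocal-type counterpart are bounded. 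The slope change also needs a lower bound on $b_p-a_p$, which we secure by only using relative intervals of length at least $\delta'$. Here $\delta'$ is chosen from $\varepsilon$ and $(\varphi^U-\varphi^L)$ so that every interval within $\varepsilon$ of $[a,b]$ in (c) is approximated using a relative grid of mesh $\delta\ll\delta'$. This is the step I expect to be the main obstacle: we need an explicit estimate showing that a relative perturbation of size $\delta$ on intervals of relative length $\ge\delta'$ moves both $\alpha_2$ and $\alpha_2^{-1}$ by less than $\varepsilon$. Because $\alpha_1=A_{g_{i+1}}A_{g_i}^{-1}$ and $\alpha_2=A_{g_{i+1}}B_2B_1^{-1}A_{g_i}^{-1}$, where $B_1,B_2$ are the relative affine maps $[0,1]\to[a_p,b_p],[a_{p+1},b_{p+1}]$, it suffices to have $B_2B_1^{-1}$ within $\varepsilon/R$ of the identity on $[a_p,b_p]$. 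That follows once $\delta/\delta'$ is small. The inverse is handled symmetrically, with the minimal length ratio in place of $R$.

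For the sequence itself, fix a finite grid $\mathcal D$ of pairs $(a,b)$ with $0\le a<b\le1$, $b-a\ge\delta'$, and mesh $\delta$, and assume $(0,1)\in\mathcal D$. Handling short target intervals in (c) within $\varepsilon$ needs only $\delta'<\varepsilon$. The relative grid is also uniform across the different $g$, since $A_g$ has Lipschitz constant $\le1$, so relative mesh $\delta\le\varepsilon$ suffices. Order $\mathcal D$ as a path $D_0=(0,1),D_1,\dots,D_r$ with consecutive entries $\delta$-close, and then return to $(0,1)$ along a path of $\delta$-close steps, for instance by retracing. Let the relative label be constant along each lap of $k$ vertices, changing only between laps; the wraparound edge from the last lap to the first is covered by the return path. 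For (c), each $D_q$ appears on an entire lap, so every $g$ receives every relative interval, which gives (c). The lap with $(0,1)$ gives (b).

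Finally, the number of laps needed is some $m_0$, the length of this closed path. For any $m\ge m_0$ we pad the path by repeating $(0,1)$ on additional laps, which affects none of (a)–(d), and obtain $|\tilde G|=m|G|$.
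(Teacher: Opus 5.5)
Your proposal is correct and is essentially the paper's construction. In both, $\tilde G$ is a long cycle winding around $G$, with labels that are affine images $A_g(j/N)$ of an equally spaced grid of relative pairs (these are exactly the paper's $x_g(j)$), the full pair is included for (b), the labels vary slowly to get (d), and extra constant laps are added for the ``moreover'' clause. The only difference is bookkeeping. You keep the relative pair constant on each lap and traverse the grid in small steps. The paper instead lists the triples $(h_i,j_i,k_i)$ in arbitrary order and then inserts interpolating segments $c^i_j$, a step it only sketches, so your version actually supplies the missing estimate.

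That estimate is easier than you expected. $B_2B_1^{-1}-\mathrm{id}$ is affine, so its maximum modulus on $[a_p,b_p]$ is $\max(|a_{p+1}-a_p|,|b_{p+1}-b_p|)\le\delta$, and the same holds for the inverse. Since each $A_g$ has slope at most $1$, this gives $\Gamma_n,\Gamma^+_n\le\delta$ directly, with no lower bound $\delta'$ on interval lengths and no factor $R$.
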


\begin{proof}
    For each $g \in G$ let $I_g =[\varphi^L(g),\varphi^U(g)]$ and $\{x_g(j)\}_{j=0}^N$ be a sequence that partitions $I_g$ into equal pieces. We will describe how to choose $N$ later so that we can focus on the general construction now.  We want an ordering of $\{(x_g(j),x_g(k))\}_{g \in G,\  0 \le j < k \le N} $. More precisely, we can construct a sequence $\{(h_i, j_i,k_i))\}_{i=1}^L$ so that
    \begin{itemize}
        \item [(i)] $h_i \in G$, $0 \le j_i < k_i 
        \le N$,
        \item[(ii)] for all $1 \le i <L$, $\oa{h_ih_{i+1}} \in G$,
        \item[(iii)] for all $g \in G$, for all $0 \le j <k \le N$, there exists $1 \le i \le L$ such that $(h_{i},j_{i},k_{i}) = (g, j, k)$,
         \item[(iv)] $\oa{h_Lh_0} \in G$, $j_0=j_L=0$ and $k_0= k_L=N$.
        
    \end{itemize}
    The idea is that we start with some $g \in G$, $1 \le j_1 < k_1 \le  N$ and let $h_1 =g$, and go around the digraph $G$, changing $g$ according to the edge structure and covering all possible $0 \le j <k \leq N$.

    Note that necessarily $L$ is a multiple of $|G|$. 
    We let $\tilde{G}$ be any set consisting of $N$ distinct objects $\{\tilde{g}_1, \tilde{g}_2, \ldots, \tilde{g}_N\}$ different from elements of $G$. Digraph structure on set $\tilde{G}$ is defined in a natural way: the only edges of $\tilde{G}$ are $\oa{\tilde{g}_i\tilde{g}_{i+1}}$,  $1 \le i <N$ and $\oa{\tilde{g}_N \tilde{g}_1}$, yielding that $\tilde{G}$ is a cycle. The map $\Psi: \tilde{G} \rightarrow G$ is defined in the obvious way: $\Psi(\tilde{g}_i) = h_i$. Conditions (ii) and (iv) above, guarantee us that $\Psi$ is an surjective edge preserving homomorphism onto $G$. The maps $\tilde{\varphi}^L, \tilde{\varphi}^U: \tilde{G} \rightarrow [0,1]$ are defined by $\tilde{\varphi}^L (\tilde{g}_i)=x_{h_i}(j_i)$, $\tilde{\varphi}^U (\tilde{g}_i)=x_{h_i}(k_i)$. It is clear that required Conditions (a) and (b) of the lemma  are satisfied. If we choose $N$ large enough so that $1/N < \varepsilon$, then Condition (c) is satisfied as well.

    We also need to satisfy part (d) of the lemma. The above construction does not do this. We modify $\tilde {G}$ by inserting cycles between $\tilde{g_i}$ and $\tilde{g}_{i+1}$, for all $1 \le i < L$. More precisely, fix $1 \le i < L$. We let $\{c^i_j\}_{j=1}^{M_i}$ be a sequence such that 
    \begin{itemize}
        \item $c^i_1 = \tilde{g}_i$, $c^i_{M_i} = \tilde{g}_{i+1}$,
        \item $\Psi$ is defined on $\{c^i_j\}_{j=1}^{M_i}$ so it is a surjective edge preserving homomorphism onto $G$.
        \item $\tilde{\varphi}^L$ and $\tilde{\varphi}^U$ are  extended on $\{c^i_j\}_{j=1}^{M_i}$ so that for all $1 \le j < M_i$, we have that 
        \begin{itemize}
            \item the affine map $\alpha^i_j$ which takes $[\tilde{\varphi}^L(c^i_j), \tilde{\varphi}^U(c^i_j)]$ to $[\tilde{\varphi}^L(c^i_{j+1}), \tilde{\varphi}^U(c^i_{j+1})]$ is $\varepsilon$-close to the affine map $\beta^i_j$ which takes $[\varphi^L(\Psi(c^i_j)), \varphi^U(\Psi(c^i_j))]$ to\\ $[\varphi^L(\Psi(c^i_{j+1})), \varphi^U(\Psi(c^i_{j+1}))]$ on their common domain 
            \item  the inverse of $\alpha^i_j$ is $\varepsilon$-close to the inverse of $\beta^i_j$ on their common domain.  
        \end{itemize}
    \end{itemize} 
    The last condition can be satisfied by small perturbations of  $[\varphi^L(\Psi(c^i_j)), \varphi^U(\Psi(c^i_j))]$ to\\ $[\varphi^L(\Psi(c^i_{j+1})), \varphi^U(\Psi(c^i_{j+1}))]$ and  using sufficiently large $M_i$.
    
We replace the previous $\tilde{G}$ by the above modified $\tilde{G}$ and label it $\{\tilde{g}_1, \ldots, \tilde{g}_M\}$. Note that $M$ is necessary a multiple of $L$ and hence a multiple of $|G|$. For any $m_0 \ge M$ we can make $|\tilde{G}| = m |G|$ by simply extending $\tilde{G}$ by going around $G$ once more with $\tilde{\varphi}^U$ and 
$\tilde{\varphi}^L$ constant as the last step. Note that Condition (iv) above allows us to do this. 
    
\end{proof}

\begin{theorem}\label{thm:minimalfraisse} Given an odometer system $(X, \homeocantor)$, there exists a minimal homeomorphism of the Fra\"iss\'e fence which admits $\homeocantor$ as a factor.  
\end{theorem}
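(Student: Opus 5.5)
The plan is to realize the odometer as a graph $\cC$-system all of whose digraphs are single cycles, and to refine it inductively using Lemma~\ref{lem:odometerminimal}. The resulting $\cF$-system should satisfy Condition~$\Gamma$ and Condition~$\Gamma^+$, and its fence should be a Fra\"iss\'e fence. Theorem~\ref{thm:main} then produces a homeomorphism $T$ of the fence with $\homeocantor$ as a factor, and Theorem~\ref{thm:GenFraisse}(2) gives minimality of $T$ directly from minimality of the odometer, since a Fra\"iss\'e fence is a two-sided Scissorhand fence.

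First I fix the odometer. It is given by a sequence of integers $q_0 \mid q_1 \mid q_2 \mid \cdots$, where $G_n$ is the cycle of length $q_n$ and $\Psi_n$ is the wrap-around map $\Z_{q_{n+1}} \to \Z_{q_n}$. Any inverse system of cycles whose lengths are related by the same divisibility pattern (after passing to a subsequence) induces an odometer conjugate to $(X,\homeocantor)$. So I only need to build cycles $G_n$ with $|G_{n+1}| = m_n |G_n|$, where $m_n$ is a multiple of the ratio of consecutive lengths in a subsequence of the given odometer. Start with $G_0$ a one-vertex cycle, $\varphi^L_0 = 0$, $\varphi^U_0 = 1$. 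At stage $n$, apply Lemma~\ref{lem:odometerminimal} to $G_n$ with $\varepsilon_n = 2^{-n}$, obtaining $G_{n+1}$, $\Psi_n$, $\varphi^L_{n+1}$, $\varphi^U_{n+1}$. The "moreover" clause of the lemma lets me choose $|G_{n+1}| = m|G_n|$ for every $m \ge m_0$. I therefore choose $m$ to be a large enough product of the odometer's ratios, which keeps the inverse limit conjugate to the given odometer.

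Next I check the hypotheses. Lemma~\ref{lem:odometerminimal}(a) gives the monotonicity required of an $\cF$-structure and keeps $\varphi^L_n < \varphi^U_n$, so the affine maps $s_n$ are defined. Part (b) gives condition~\eqref{eq:dagger} and condition 1 of Definition~\ref{def:fsystem}. Part (c) gives $\eta_n \le 2^{-n}$, so Theorem~\ref{thm:fenceconst}(5) shows that $\fence$ is a Fra\"iss\'e fence. Part (d) gives $\Gamma_n, \Gamma_n^+ < 2^{-n}$, which are summable. Condition 1 of Definition~\ref{def:graphcsystem} and its reverse hold trivially because every $G_n$ is a cycle. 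Hence $\homeocantor$ is a homeomorphism, and Theorem~\ref{thm:main}(1) yields a homeomorphism $T(x,t) = (\homeocantor(x), s(x,t))$ that lifts $\homeocantor$. It remains to confirm that $T$ maps fibers into fibers, $s(x,t) \in \fence(\homeocantor(x))$. This follows because $\tilde s_n(x,\cdot)$ maps $[\varphi^L_n(x(n)), \varphi^U_n(x(n))]$ onto the $n$-th interval over $\homeocantor(x)$, and these intervals decrease to $\fence(\homeocantor(x))$. Theorem~\ref{thm:GenFraisse}(2) then gives minimality of $T$.

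I expect the main obstacle to be the bookkeeping that makes the constructed cycle system conjugate to the prescribed odometer. The lemma only lets me choose lengths $m|G_n|$ with $m \ge m_0$, so I must pick $m$ as a sufficiently large product of consecutive ratios $q_{k+1}/q_k$ of the target odometer. I also need the standard fact that the inverse limit of cycles with maps of this form does not depend on which cyclic rotation is used at each step. That fact follows from the lemma's $\Psi$ being an edge-preserving surjective homomorphism between cycles, which must be the wrap-around map up to rotation.
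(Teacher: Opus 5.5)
Your proposal is correct and follows the paper's proof essentially step for step. You iterate Lemma~\ref{lem:odometerminimal} on cycles whose lengths come from the odometer's sequence, read off the $\cF$-system axioms from (a)--(b), Conditions $\Gamma$ and $\Gamma^+$ from (d), and $\eta_n\to 0$ from (c), and then conclude with Theorem~\ref{thm:main}, Theorem~\ref{thm:fenceconst}(5) and Theorem~\ref{thm:GenFraisse}(2). Your bookkeeping, taking $m=q_{k'}/q_k$ so that $|G_{n+1}|=q_{k'}$, is actually more precise than the paper's ``$m=p_i$''; just drop the earlier phrase ``a multiple of the ratio'', since an arbitrary multiple could change the odometer.
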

\begin{proof}
Fix a $p$-adic odometer where $p=(p_i)$, $p_i| p_{i+1}$ and $p_i<p_{i+1}$, $i\in\N$. Note that every subsequence of $(p_i)$ induces an odometer conjugate to the original one.
    We repeatedly apply Lemma~\ref{lem:odometerminimal} to construct an $\cF$-system $\fsystem$.  Suppose step $n$ is constructed and we have defined $(G_n, \Psi_n, \varphi ^L_n, \varphi^U_n)$. At step $n+1$ we choose $m=p_i$ for sufficiently large $i$, indicated by Lemma~\ref{lem:odometerminimal} applied to $(G_n, \Psi_n, \varphi ^L_n, \varphi^U_n)$, and $\epsilon <2^{-n}$. Applying Lemma~\ref{lem:odometerminimal}, we obtain  $(G_{n+1}, \Psi_{n+1}, \varphi ^L_{n+1}, \varphi^U_{n+1})$. Conditions (a) and (b) ensure that $\fsystem$ is a $\cF$-system. Condition (d) implies that $\fsystem$ satisfies Conditions $\Gamma$ and $\Gamma^+$. 
    Note that the induced homeomorphism $\homeocantor$ is conjugated to the $p$-adic odometer.
    By Theorem \ref{thm:main}, there is a homeomorphism  $T$ on the resulting fence $\fence$ which lifts $\homeocantor$. Condition (c) of Lemma~\ref{lem:odometerminimal} implies that $\eta_n$ as defined in \eqref{eq:eta_n} goes to zero. Consequently, Theorem~\ref{thm:fenceconst} (5) guarantees us that $\fence$ is a Fra\"iss\'e fence.  By Theorem~\ref{thm:GenFraisse} (2) map $T$ is minimal.
\end{proof}

\begin{corollary}\label{cor:Fraisse minimal}
There exist uncountably many minimal homeomorphisms on the Fra\"iss\'e fence such that none is a factor of any other.
\end{corollary}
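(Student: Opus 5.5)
The plan is to apply Theorem~\ref{thm:minimalfraisse} to an uncountable family of pairwise "incompatible" odometers, and then use the maximal equicontinuous factor to rule out factor maps. For each infinite set $A$ of primes, let $(X_A,h_A)$ be the odometer with supernatural number $\prod_{q\in A} q^\infty$. Concretely, take $p^A_i=\prod_{q\in A_i} q^{i}$, where $A_1\subset A_2\subset\cdots$ are finite sets exhausting $A$. Then $p^A_i\mid p^A_{i+1}$ and $p^A_i<p^A_{i+1}$. Theorem~\ref{thm:minimalfraisse} yields a minimal homeomorphism $T_A$ of the Fra\"iss\'e fence $\fence$ which admits $h_A$ as a factor via the projection $\pi_A$ onto the base. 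Since the Fra\"iss\'e fence is unique up to homeomorphism, all $T_A$ may be regarded as acting on the same space. Choose an uncountable family $\mathcal A$ of infinite sets of primes that are pairwise $\subseteq$-incomparable, for instance an almost disjoint family of infinite subsets of the primes.

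The key step is to show that $h_A$ is the maximal equicontinuous factor of $T_A$. I would argue this via the eigenvalues. The continuous eigenvalues of $h_A$ are the roots of unity of order dividing some $p^A_i$. Suppose $f$ is a continuous eigenfunction of $T_A$ with $f\circ T_A=\lambda f$. Minimality of $T_A$ makes $|f|$ constant. The plan is to show that $f$ is constant on each fiber $\fence(x)$, so that $f$ descends to an eigenfunction of $h_A$. To do this, use the degenerate components: by Proposition~\ref{prop:pointcompdense}(2), the set ${\bf D}_2$ is dense. Alternatively, use that $T_A$ maps fibers affinely-limit to fibers. I would try the following argument. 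Take $y=(x,t)$ and $y'=(x,t')$ in the same fiber. Continuity of the eigenfunction gives that $f(y)\overline{f(y')}$ is $T_A$-invariant along the pair orbit. The pair $(y,y')$ is proximal: along a sequence $n_k$ with $h_A^{n_k}(x)$ converging to a point $z\in{\bf D}_1$, the fibers $\fence(h_A^{n_k}x)$ Hausdorff-converge into the singleton fiber $\fence(z)$ by upper semicontinuity of the fiber map. Hence $T_A^{n_k}y$ and $T_A^{n_k}y'$ both converge to the same point. Then
\[
|f(y)-f(y')|=|\lambda^{n_k}|\,|f(y)-f(y')|=|f(T_A^{n_k}y)-f(T_A^{n_k}y')|\to 0,
\]
so $f(y)=f(y')$. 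Thus the eigenvalues of $T_A$ coincide with those of $h_A$, namely the group $E_A$ of roots of unity whose order divides some $p^A_i$. The same proximality argument shows that every fiber is contained in a proximal class. Since proximal points are identified in any equicontinuous factor, this gives directly that every equicontinuous factor of $T_A$ factors through $h_A$.

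Finally, suppose $\rho$ is a factor map from $T_A$ onto $T_B$ with $A\neq B$ in $\mathcal A$. Then $h_B\circ\pi_B\circ\rho$ is an equicontinuous factor of $T_A$. Eigenvalues pull back along factor maps, so $E_B\subseteq E_A$. This forces $B\subseteq A$, which contradicts incomparability. Hence no $T_A$ is a factor of another, and in particular the $T_A$ are pairwise non-conjugate.

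I expect the main obstacle to be the proximality step. It needs a sequence $n_k$ with $h_A^{n_k}(x)$ tending to a point of ${\bf D}_1$, which minimality of the odometer provides. It also needs that fibers near a degenerate fiber have small diameter. This is exactly the openness of $U_r$ established in the proof of Proposition~\ref{prop:pointcompdense}.
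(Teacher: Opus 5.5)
Your argument is correct, but it uses a different invariant from the paper's proof.

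\textbf{The paper's route.} The components of the fence are exactly the fibers $\fence(x)$. So if $T_a$ were a factor of $T_b$, composing with the base projection gives a factor map from $T_b$ onto the zero-dimensional system $H_a$. Such a map is constant on connected components, hence factors through the component quotient $H_b$. In other words, $H_b$ is the maximal zero-dimensional factor of $T_b$. The contradiction then comes from Kurka's divisibility criterion for odometers, applied to an almost disjoint family of sets of primes.

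\textbf{Your route.} You show that any two points of a fiber are proximal: their orbits are pushed toward a point of ${\bf D}_1$, where fibers shrink by semicontinuity. It follows that $h_A$ is the maximal equicontinuous factor of $T_A$, and that $T_A$ has the same eigenvalue group as $h_A$. Pulling eigenfunctions back along $\pi_B\circ\rho$ then gives $E_B\subseteq E_A$, which contradicts incomparability without citing Kurka. (The extra $h_B$ in $h_B\circ\pi_B\circ\rho$ is superfluous but harmless.)

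\textbf{Comparison.} The paper's argument is shorter and purely topological. It needs only that fibers are connected and the base is totally disconnected. It does not use minimality, the Fra\"iss\'e property, or the existence of degenerate fibers. Your argument uses more structure: minimality of the odometer and nonemptiness of ${\bf D}_1$ from Proposition~\ref{prop:pointcompdense}. In return it proves more. It identifies the maximal equicontinuous factor and the eigenvalue group of each $T_A$, which is a finer dynamical invariant than the maximal zero-dimensional factor. It is self-contained apart from the standard eigenvalue description of odometers. It would also apply to almost one-to-one extensions whose fibers are not connected.
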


\begin{proof}
By \cite[Proposition 4.5 (1)]{Kurka}, a $p$-adic odometer is a factor of a $q$-adic odometer if and only if for every $i\in\N$ there exists $j\in\N$ such that $p_i$ divides $q_j$, where $p=(p_k)$ and $q=(q_k)$. Since there is an uncountable family of infinite subsets of prime numbers, each pair of which intersects in a finite set, we can easily construct uncountably many odometers $H_a, a\in A$, such that none is a factor of any other. 

By Theorem \ref{thm:minimalfraisse} we can find a lift $T_a$ on the Fra\"iss\'e fence of $H_a$, $a\in A$. We claim that $T_a$ is not a factor of $T_b$ for $a\neq b\in A$.
Suppose that $T_a$ is a factor of $T_b$. Then $H_a$ is a zero-dimensional factor of $T_b$. Since $H_b$ is the maximal zero-dimensional factor of $T_b$, it follows that $H_a$ is a factor of $H_b$, yielding a contradiction. In the above, we used the fact that every dynamical system $(X,f)$ has a unique maximal factor, namely the factor determined by the decomposition of the space into its components. In our case, the maximal factor of $T_b$ is $H_b$. 
\end{proof}

\section{Acknowledgments}
 J.~\v Cin\v c was partially supported by Slovenian research agency ARIS grant J1-4632 and ARIS project under Contract No. SN-ZRD/22-27/0552. 
 J.~\v Cin\v c and U.~Darji acknowledge the support of the ARIS grant J1-4632, which enabled U.~Darji to visit the University of Maribor, where this project was initiated, and supported his subsequent visits.

 B.~Vejnar was supported by the grant GA\v{C}R 24-10705S.

\begin{table}[ht]
\begin{tabular}[t]{p{2.5cm}  p{11cm} }
\includegraphics [width=2.1cm]{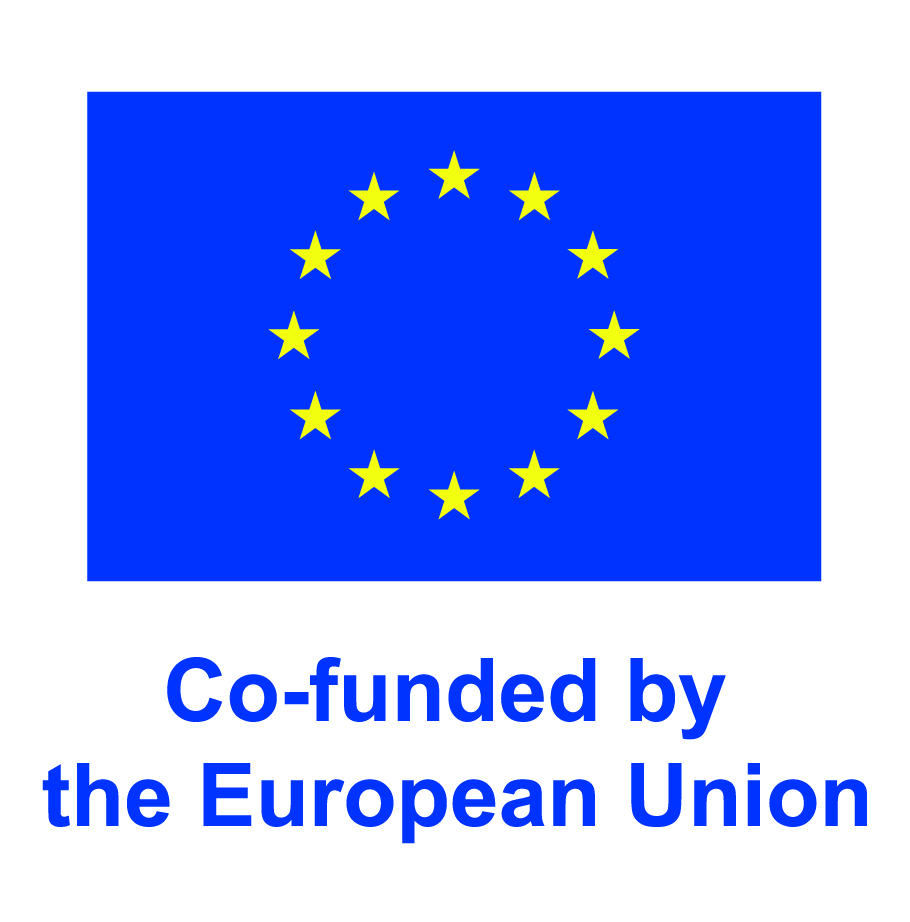} &
\vspace{-2cm}
This research is part of J.\ \v Cin\v c's  project that has received funding from the European Union's Horizon Europe research and innovation programme under the Marie Sk\l odowska-Curie grant agreement No.\ HE-MSCA-PF-PFSAIL-101063512.\\
\end{tabular}
\end{table}

\bibliographystyle{alpha}
\bibliography{citations}
\end{document}